\newcommand{\reg}{{\rm reg}}
\newcommand{\Pic}{{\rm Pic}}
\newcommand{\gon}{{\rm gon}}
\newcommand{\Cliff}{{\rm Cliff}}
\renewcommand{\det}{{\rm det}}
\renewcommand{\dim}{{\rm dim}}
\renewcommand{\deg}{{\rm deg}}
\newcommand{\rk}{{\rm rk}}
\newcommand{\coker}{{\rm coker}}
\renewcommand{\det}{{\rm det}}
\theoremstyle{plain}
\newtheorem{thm}{Theorem}[section]
\newtheorem{cor}[thm]{Corollary}
\newtheorem{prop}[thm]{Proposition}
\newtheorem{con}[thm]{Conjecture}
\theoremstyle{definition}
\newtheorem{defn}[thm]{Definition}
\newtheorem{ex}[thm]{Example}
\newtheorem{rmk}[thm]{Remark}
\newtheorem{question}[thm]{Question}
\newtheorem{problem}[thm]{Problem}
\def\PP{{\textbf P}}
\def\OO{\mathcal{O}}
\def\cB{\mathcal{B}}
\def\cA{\mathcal{A}}
\def\F{\mathcal{F}}
\def\E{\mathcal{E}}
\def\K{\mathcal{K}}
\def\I{\mathcal{I}}
\def\cM{\mathcal{M}}
\def\cR{\mathcal{R}}
\def\rr{\overline{\mathcal{R}}}
\def\cZ{\mathcal{Z}}
\def\mm{\overline{\mathcal{M}}}
\def\kk{\overline{\mathcal{K}}}
\def\rem{\overline{\textbf{M}}}
\def\pem{\widetilde{\textbf{M}}}
\begin{document}
\title{Koszul cohomology and applications to moduli}
\author[M. Aprodu]{Marian Aprodu}
\address{Institute of Mathematics "Simion Stoilow" of the Romanian
Academy, RO-014700 Bucharest,
and \c Scoala Normal\u a Superioar\u a Bucure\c sti,
Calea Grivi\c tei 21,  RO-010702 Bucharest, Romania}
 \email{{\tt
aprodu@imar.ro}}
\author[G. Farkas]{Gavril Farkas}

\address{Humboldt-Universit\"at zu Berlin, Institut F\"ur Mathematik,
10099 Berlin, Germany}
\email{{\tt farkas@math.hu-berlin.de}}

\markboth{M. APRODU and G. FARKAS}
{Koszul cohomology with applications to moduli}

\thanks{Both authors would like to thank the Max Planck Institut f\"ur
Mathematik Bonn for hospitality during the preparation of this work.
Research of the first author partially supported by a PN-II-ID-PCE-2008-2
grant, cod CNCSIS 2228.
Research  of the second author partially supported by an Alfred P. Sloan Fellowship}

\maketitle

%

\section{Introduction}

One of the driving problems in the theory of algebraic curves in the past two decades has been
\emph{Green's Conjecture} on syzygies of canonical curves. Initially formulated by M. Green  \cite{Gr84a}, it is a deceptively simple vanishing statement concerning Koszul cohomology
groups of canonical bundles of curves: If $C\stackrel{|K_C|}\longrightarrow \mathbb P^{g-1}$ is a smooth canonically embedded curve of genus $g$, Green's Conjecture predicts the equivalence
\begin{equation}\label{grr}
K_{p, 2}(C, K_C)=0\Longleftrightarrow p<\mathrm{Cliff}(C),
\end{equation}
where $\mathrm{Cliff}(C):=\mbox{min}\{\mbox{deg}(L)-2r(L): L\in \mathrm{Pic}(C), \ h^i(C, L)\geq 2, \ i=0, 1\}$
denotes the \emph{Clifford index} of $C$. The main attraction of Green's Conjecture is that it links the \emph{extrinsic} geometry of $C$ encapsulated in $\mathrm{Cliff}(C)$ and all the linear series $\mathfrak g^r_d$ on $C$, to the \emph{intrinsic} geometry (equations) of the canonical embedding. In particular, quite remarkably, it shows that one can read the Clifford index of any curve off the equations of its canonical embedding. Hence  in some sense, a curve has no other interesting line bundle apart from the canonical bundle
and its powers\footnote{{ "The canonical bundle is not called canonical for nothing"- Joe Harris}}.

One implication in (\ref{grr}), namely that $K_{p, 2}(C, K_C)\neq 0$ for $p\geq \mbox{Cliff}(C)$, having been immediately established in \cite{GL84}, see also Theorem \ref{thm: GL nonvan} in this survey,  the converse, that is, the vanishing statement  $$K_{p, 2}(C, K_C)=0
\ \mbox{ for } p<\mbox{Cliff}(C),$$ attracted a great deal of effort and resisted proof despite an
amazing number of attempts and techniques devised to prove it, see \cite{GL84}, \cite{Sc1}, \cite{Sc3}, \cite{Ein}, \cite{Ei92}, \cite{Paranjape-Ramanan}, \cite{Teixidor02}, \cite{Voisin: Proc LMS}. The major breakthrough came around 2002 when Voisin \cite{Voisin: even}, \cite{Voisin: odd}, using specialization to curves on $K3$ surfaces, proved that Green's Conjecture holds for a general curve $[C]\in \cM_g$:
\begin{thm}\label{vo}
For a general curve $[C]\in \cM_{2p+3}$ we have that $K_{p, 2}(C, K_C)=0$.
For a general curve $[C]\in \cM_{2p+4}$ we have that $K_{p, 2}(C, K_C)=0$. It follows that Green's
Conjecture holds for general curves of any genus.
\end{thm}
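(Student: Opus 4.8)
The plan is to prove the single vanishing $K_{p,2}(C,K_C)=0$ for a sufficiently general curve of the two relevant genera by \emph{specializing} $C$ to a curve on a $K3$ surface, transferring the problem to the ambient surface, and finally computing the Koszul cohomology of the $K3$ through the geometry of its Hilbert scheme of points. First I would record the reduction to one curve: in a flat family of smooth curves the function $t\mapsto\dim_{\CC}K_{p,2}(C_t,K_{C_t})$ is upper semicontinuous, so $\{[C]\in\cM_g:K_{p,2}(C,K_C)=0\}$ is open, and since $\cM_g$ is irreducible it is enough to exhibit a single smooth curve $[C_0]\in\cM_g$ with $K_{p,2}(C_0,K_{C_0})=0$. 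Taking $g=2p+3$ and $g=2p+4$ exhausts all odd and all even genera, and in both cases the given index equals $\mathrm{Cliff}(C)-1$ for a general $C$ (whose Clifford index equals $\lfloor(g-1)/2\rfloor$ for a Brill--Noether--general curve). Combining this with the Green--Lazarsfeld non-vanishing $K_{q,2}(C,K_C)\neq 0$ for $q\geq\mathrm{Cliff}(C)$ (Theorem \ref{thm: GL nonvan}) and with the standard monotonicity $K_{q,2}(C,K_C)=0\Rightarrow K_{q-1,2}(C,K_C)=0$ --- itself a consequence of Green's duality $K_{q,2}(C,K_C)^{\vee}\cong K_{g-2-q,1}(C,K_C)$ together with the fact that the linear strand of a canonical curve, once it terminates, stays terminated --- gives the full equivalence (\ref{grr}) for a general curve of every genus, which is the final assertion of the theorem.

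For the model curve I would take $(S,L)$ a general primitively polarized $K3$ surface with $L^2=2g-2$ and $C_0\in|L|$ a general member: this $C_0$ is a smooth curve of genus $g$ with $K_{C_0}=L|_{C_0}$, and by Lazarsfeld's theorem it is Brill--Noether--Petri general, so $\mathrm{Cliff}(C_0)=\lfloor(g-1)/2\rfloor$ and the vanishing we need is genuinely the critical instance of Green's Conjecture. Since $\OO_S(C_0)=L$ and $H^1(S,\OO_S)=0$, Green's hyperplane section theorem applied to the restriction sequence $0\to\OO_S\to L\to K_{C_0}\to 0$ yields $K_{p,2}(C_0,K_{C_0})\cong K_{p,2}(S,L)$, so the task becomes the \emph{surface} statement $K_{p,2}(S,L)=0$ for the relevant $p$.

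Here Voisin's key idea enters: one reads this group off the punctual Hilbert scheme $S^{[n]}$ of length-$n$ subschemes of $S$ (with $n$ of order $p$), a smooth projective holomorphic symplectic variety carrying the rank-$n$ tautological bundle $L^{[n]}$, whose fibre over $\xi$ is $H^0(L\otimes\OO_\xi)$. Exploiting the vanishing of the higher cohomology of the tautological sheaves in play, one identifies $K_{p,2}(S,L)$ with the cokernel of --- equivalently, the kernel of a map dual to --- a natural restriction-type map between spaces of global sections of line bundles built from $\det L^{[n]}$ and from pullbacks of $L$ on $S^{[n]}$ and $S^{[n+1]}$. In this way the desired $K_{p,2}(S,L)=0$ becomes an explicit cohomology-vanishing, or section-extension, statement on the Hilbert scheme.

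The final step --- proving that statement --- is the technical heart, and I expect it to be the main obstacle. One degenerates $S$, within its moduli, to a \emph{special} $K3$ carrying an auxiliary linear pencil, e.g.\ an elliptic fibration or a pencil cut out by an extra class in $\mathrm{NS}(S)$, and uses the pencil to stratify $S^{[n]}$ according to how a length-$n$ scheme distributes among the members of the pencil. The induced filtration of the tautological bundle is compatible with this stratification, so that the relevant cohomology can be computed inductively --- by lowering $n$, or by reducing to fibres of the pencil where the bundles become elementary. Carrying this out requires delicate control of tautological bundles and their cohomology under specialization, careful treatment of base change on the Hilbert scheme, and sharp vanishing theorems on $S^{[n]}$; it is also the reason the theorem splits into two cases, since the odd-genus ($g=2p+3$) and even-genus ($g=2p+4$) situations are not formally parallel and demand different special surfaces and different inductive schemes.
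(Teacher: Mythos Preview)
Your outline is essentially the approach the paper attributes to Voisin: semicontinuity reduces to a single curve, one chooses that curve on a $K3$ surface, transfers the problem to the surface via Green's Hyperplane Section Theorem, and then proves the surface vanishing through the Hilbert-scheme description of Koszul cohomology (the paper's Theorem~\ref{thm: Voisin description}). So the strategy is right.

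Two points where your description drifts from what the paper records. First, there is no two-step ``take a general $K3$, then degenerate to a special one'': the paper's Theorem~\ref{thm: Voisin} works from the outset on a fixed $K3$ surface $S$ with $\mathrm{Pic}(S)\cong\mathbb Z\cdot L\oplus\mathbb Z\cdot\mathcal O_S(\Delta)$, where $\Delta$ is a smooth \emph{rational} $(-2)$-curve with $L\cdot\Delta=2$ and $L^2=4k$. That single surface covers both parities: the vanishing $K_{k,1}(S,L)=0$ handles odd genus $g=2k+1$ (i.e.\ $p=k-1$, $g=2p+3$), while $K_{k+1,1}(S,L\otimes\mathcal O_S(\Delta))=0$ handles even genus $g=2k+2$ via the polarization $L\otimes\mathcal O_S(\Delta)$. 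Your guess of ``an elliptic fibration or a pencil cut out by an extra class'' and ``different special surfaces'' for the two parities is off target; the extra class is a rational curve, and the asymmetry between the two cases lies in the polarization, not the surface. Second, your sketch of the Hilbert-scheme computation (stratify $S^{[n]}$ by how subschemes meet members of a pencil, filter the tautological bundle, induct on $n$) is a plausible-sounding caricature but not what actually happens; the argument is specific to the interaction of $\det L^{[n]}$ with the incidence variety $\Xi_n$ in Theorem~\ref{thm: Voisin description}, and the role of $\Delta$ is more delicate than providing a pencil. Since the paper itself only surveys this and refers to \cite{Voisin: even}, \cite{Voisin: odd} for the proof, your level of detail is acceptable, but you should align the $K3$ setup with Theorem~\ref{thm: Voisin}.
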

Combining the results of Voisin with those of Hirschowitz and Ramanan \cite{HR98}, one finds that Green's Conjecture is true for \emph{every} smooth curve $[C]\in \cM_{2p+3}$ of maximal gonality $\mbox{gon}(C)=p+3$. This turns out to be
a remarkably strong result. For instance, via a specialization argument,  Green's Conjecture for curves of \emph{arbitrary} curves of maximal gonality  implies Green's Conjecture for \emph{general} curves of
genus $g$ and \emph{arbitrary} gonality $2\leq d\leq [g/2]+2$. One has the following more precise result cf. \cite{Ap05}, see Theorem \ref{thm: small BN} for a slightly different proof:

\begin{thm}\label{aprodu1}
We fix integers $2\leq d\leq [g/2]+1$. For any smooth $d$-gonal curve $[C]\in \cM_g$ satisfying the condition
$$\mathrm{dim }\  W^1_{g-d+2}(C)\leq g-d+2,$$
we have that $K_{d-3, 2}(C, K_C)=0$. In particular $C$ satisfies Green's Conjecture.
\end{thm}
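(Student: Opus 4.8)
The plan is to reduce Green's Conjecture for a $d$-gonal curve $C$ to a statement about Koszul cohomology that can be handled by the Green--Lazarsfeld nonvanishing machinery running "in reverse", together with a duality argument. First I would invoke the hyperplane section / duality results for Koszul cohomology (which are recalled earlier in this survey): by the Koszul duality theorem one has $K_{d-3,2}(C,K_C)^{\vee}\cong K_{g-d-1,1}(C,K_C)$, so the vanishing $K_{d-3,2}(C,K_C)=0$ is equivalent to the vanishing of a first-syzygy group in the linear strand. The point of passing to this dual group is that linear syzygies are governed by geometric constructions — scrolls and the Green--Lazarsfeld secant construction — so one can hope to control them via the gonality pencil.

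Next I would bring in the pencil. Let $A$ be a $\mathfrak g^1_d$ computing the gonality, so $|K_C - A|$ is a $\mathfrak g^{g-d}_{2g-2-d}$ whose image lies on a $(d-1)$-dimensional rational normal scroll $X\subset\mathbb P^{g-1}$ swept out by the linear spans of the divisors in $|A|$. The resolution of $C$ as a subvariety of this scroll, together with the Eagon--Northcott type resolution of the scroll itself, lets one bound $K_{p,1}(C,K_C)$ in terms of Koszul cohomology on $\mathbb P^1$ twisted by the relevant line bundles; this is precisely the mechanism by which a low-gonality pencil forces extra linear syzygies (the Green--Lazarsfeld nonvanishing, Theorem \ref{thm: GL nonvan}), and the content here is that when $\dim W^1_{g-d+2}(C)$ is no larger than $g-d+2$ these are the \emph{only} linear syzygies, so that $K_{g-d-1,1}$ does not jump. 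I would make this precise by an induction on $d$ (or on $g$), the base case being the classical cases $d=2,3$ (hyperelliptic and trigonal curves, where the conjecture is known) and the inductive step removing a base point or specializing the pencil.

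Alternatively — and this is likely the cleaner route given what the survey has set up — I would deduce the statement from Theorem \ref{vo} together with Theorem \ref{aprodu1}'s own stated strategy: Voisin's theorem gives the vanishing for a \emph{general} curve of each genus, and the Hirschowitz--Ramanan comparison upgrades this to \emph{every} curve of maximal gonality $\gon=p+3$ in genus $2p+3$; then a degeneration argument (attaching elliptic tails, or varying in a family whose general member has maximal gonality) transports the vanishing to an arbitrary $d$-gonal curve, provided the Brill--Noether locus $W^1_{g-d+2}(C)$ does not behave too wildly — the dimension hypothesis $\dim W^1_{g-d+2}(C)\le g-d+2$ being exactly what guarantees that $C$ sits in such a family in a way that the semicontinuity of Koszul cohomology can be applied.

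The main obstacle, in either approach, is controlling the jump of Koszul cohomology across the specialization: semicontinuity gives $K_{d-3,2}$ of the special curve is at least that of the general one, which is the wrong direction, so one must instead prove an \emph{upper} bound for $\dim K_{d-3,2}(C,K_C)$ directly, and this is where the hypothesis on $\dim W^1_{g-d+2}(C)$ enters — it bounds the dimension of the space of linear syzygies contributed by pencils of that degree, via the scroll/secant analysis, and one has to show there is no further contribution. Verifying that the scrollar syzygies account for \emph{everything} under this dimension bound is the crux; the rest is the (by now standard) homological bookkeeping with Eagon--Northcott complexes and Koszul duality.
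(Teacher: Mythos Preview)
Your proposal correctly identifies the duality step (modulo a sign slip: $K_{d-3,2}(C,K_C)^\vee\cong K_{g-d+1,1}(C,K_C)$, not $K_{g-d-1,1}$) and correctly recognizes that Voisin's theorem plus Hirschowitz--Ramanan is the engine. But both of your suggested routes have a genuine gap, and you yourself put your finger on it: semicontinuity runs the wrong way, and the scroll/secant analysis does not, under the stated hypothesis alone, show that the scrollar syzygies are \emph{all} the linear syzygies. That last assertion is essentially equivalent to what you are trying to prove, so invoking it is circular.

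The idea you are missing is this: rather than degenerating $C$ to something simpler, one \emph{builds up} from $C$ a stable curve $C'$ of \emph{higher}, odd genus $g'=2g+3-2d$ by identifying $\delta=g+3-2d$ pairs of general points on $C$ (irreducible $\delta_0$-type nodes, not elliptic tails). Two things then happen. First, there is an honest inclusion
\[
K_{g-d+1,1}(C,K_C)\;\hookrightarrow\;K_{g-d+1,1}(C',\omega_{C'}),
\]
coming from the projection formula for the normalization map, so vanishing on $C'$ forces vanishing on $C$; this replaces your failed semicontinuity step. Second, the dimension hypothesis $\dim W^1_{g-d+2}(C)\le g-2d+2$ is used, via admissible covers, to show that $C'$ has \emph{maximal} gonality in genus $g'$: a pencil of degree $\le g+2-d$ on $C'$ would pull back to a pencil on $C$ identifying each of the $\delta$ glued pairs, and for general points this forces $\dim W^1_n(C)\ge \delta$ for some $n\le g-d+2$, contradicting the hypothesis. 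Since $g'=2d'-1$ with $d'=g+2-d$, the nodal extension of the Hirschowitz--Ramanan--Voisin theorem (Theorem~\ref{thm: nodal} in the paper) then gives $K_{d'-1,1}(C',\omega_{C'})=K_{g-d+1,1}(C',\omega_{C'})=0$, and you are done. The hypothesis on $W^1_{g-d+2}(C)$ is thus a gonality obstruction for the nodal curve, not a bound on syzygy contributions from scrolls.
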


Dimension theorems from  Brill-Noether theory
due to Martens, Mumford, and Keem cf. \cite{ACGH},
indicate precisely when the condition appearing
in the statement of Theorem \ref{aprodu1} is
verified. In particular, Theorem \ref{aprodu1} proves Green's Conjecture for general
$d$-gonal curves of genus $g$ for any possible gonality $2\leq d\leq [g/2]+2$ and
offers an alternate, unitary proof of classical results of Noether,
Enriques-Babbage-Petri as well as of more recent results due to
Schreyer and Voisin. It also implies the following new result
which can be viewed as a proof of statement (\ref{grr}) for $6$-gonal curves.
We refer to Subsection 4.1 for details:

\begin{thm}
For any curve $C$ with $\mathrm{Cliff}(C)\ge 4$, we
have $K_{3,2}(C,K_C)= 0$. In particular, Green's Conjecture holds for arbitrary $6$-gonal curves.
\end{thm}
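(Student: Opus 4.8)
The plan is to reduce the required vanishing to Theorem~\ref{aprodu1} by means of the elementary relation between gonality and Clifford index, after first replacing it by a single, more flexible vanishing statement. We may assume $C$ is non-hyperelliptic. It is well known that vanishing propagates downward along the quadratic strand of a canonically embedded curve, i.e. $K_{p,2}(C,K_C)=0$ implies $K_{p-1,2}(C,K_C)=0$ (equivalently, by Green's duality \cite{Gr84a}, the linear strand of the canonical embedding is connected); hence it is enough to exhibit \emph{some} integer $p_0\ge 3$ with $K_{p_0,2}(C,K_C)=0$, for then $K_{3,2}(C,K_C)=0$ follows by descending from $p_0$ to $3$. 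Set $d:=\gon(C)$. Since $\mathrm{Cliff}(C)\ge 4$ forces $g\ge 9$, the gonality pencil $A$ has $h^0(C,A)=2$ and $h^1(C,A)=g-d+1\ge 2$, so $\mathrm{Cliff}(C)\le\deg A-2=d-2$, that is $d\ge\mathrm{Cliff}(C)+2\ge 6$.

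We now argue according to the size of $d$. If $d\le[g/2]+1$, we apply Theorem~\ref{aprodu1} (or Theorem~\ref{thm: small BN}) with this value of $d$ to obtain $K_{d-3,2}(C,K_C)=0$; since $d-3\ge 3$, the reduction above yields $K_{3,2}(C,K_C)=0$. The only remaining possibility is $d=[g/2]+2$, which can occur only when $g=2k+3$ is odd, in which case $d=k+3$ is the maximal gonality; then $C$ satisfies Green's conjecture outright by Voisin's Theorem~\ref{vo} combined with the Hirschowitz--Ramanan result \cite{HR98} recalled above, and since $3<4\le\mathrm{Cliff}(C)$ we again conclude $K_{3,2}(C,K_C)=0$.

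The step that demands real work is the verification of the Brill--Noether hypothesis of Theorem~\ref{aprodu1}, a bound on the dimensions of the loci $W^1_\bullet(C)$ amounting to a linear-growth condition for the gonality pencil, for our curve $C$, which is $d$-gonal with $d\ge 6$ and has $\mathrm{Cliff}(C)\ge 4$. For all but finitely many such $C$ this is immediate from the dimension theorems of Martens, Mumford and Keem \cite{ACGH}: being $d$-gonal with $d\ge 6$, $C$ is neither hyperelliptic, trigonal, tetragonal, bielliptic, nor a smooth plane quintic, so the corresponding estimates keep the relevant $W^1$-dimensions small enough. What is left over, and this is the main obstacle, is to pin down exactly which curves of Clifford index $\ge 4$ violate the condition and to dispose of each one by hand: by the classification of such curves these are the smooth plane curves (necessarily of degree $\ge 8$, so that $\mathrm{Cliff}(C)\ge 4$) together with a short, explicitly determined list of further special curves, and for all of them Green's conjecture, hence a fortiori the vanishing of $K_{3,2}(C,K_C)$, follows from independent arguments.

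Finally, Green's conjecture for an arbitrary $6$-gonal curve $C$ is a consequence. Such a curve satisfies $\gon(C)-3\le\mathrm{Cliff}(C)\le\gon(C)-2$, hence $\mathrm{Cliff}(C)\in\{3,4\}$. If $\mathrm{Cliff}(C)=3$, then the Clifford index is not computed by a pencil (such a pencil would be a $\mathfrak{g}^1_5$, contradicting $\gon(C)=6$), so by the classification of curves whose Clifford index is computed by a net, $C$ is a smooth plane septic, which is covered by the plane-curve case of the previous paragraph. If $\mathrm{Cliff}(C)=4$, the vanishing established above, propagated downward, gives $K_{p,2}(C,K_C)=0$ for all $p\le 3$, while the Green--Lefschetz non-vanishing theorem (Theorem~\ref{thm: GL nonvan}) gives $K_{p,2}(C,K_C)\neq 0$ for $4\le p\le g-2$; together these constitute precisely Green's conjecture for $C$. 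This completes the proof.
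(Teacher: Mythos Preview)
Your propagation lemma ($K_{p,2}(C,K_C)=0\Rightarrow K_{p-1,2}(C,K_C)=0$) is correct and your overall shape is reasonable, but there is a genuine gap in the verification of the Brill--Noether hypothesis. You apply Theorem~\ref{aprodu1} (equivalently Theorem~\ref{thm: small BN}) with $d=\gon(C)$, which may be arbitrarily large, and then claim that the condition
\[
\dim W^1_{g-d+2}(C)\le g-2d+2
\]
follows ``for all but finitely many such $C$'' from the theorems of Martens, Mumford and Keem. It does not. Those theorems yield bounds of the form $\dim W^1_n(C)\le n-c$ with $c\le 6$ (under the appropriate exclusions), so they verify condition~(\ref{eqn: small BN}) only for $d\le 6$. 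For a curve with $\gon(C)=10$, say, you would need $\dim W^1_{g-8}(C)\le g-18$, whereas Keem gives at best $\le g-14$. In effect, your scheme with $d=\gon(C)$ asks for the full linear growth condition, which is tantamount to proving Green's Conjecture for $C$ outright---not something the Martens--Mumford--Keem estimates deliver.

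The remedy is exactly what the paper does: apply Theorem~\ref{thm: small BN} with $d=6$ (the statement does not require $\gon(C)=d$, only condition~(\ref{eqn: small BN}), which in turn forces $\gon(C)\ge d$). Then the hypothesis becomes $\dim W^1_{g-4}(C)\le g-10$, and this is precisely what Keem's Theorem~3.1 in \cite{Ke90} provides for curves with $\gon(C)\ge 6$ that are not double or triple covers of another curve and not plane curves. With $d=6$ the conclusion is $K_{3,2}(C,K_C)=0$ directly, so your propagation step, while correct, becomes unnecessary. The remaining work---your hand-waved ``short, explicitly determined list''---is to check that every exception to Keem's theorem with $\mathrm{Cliff}(C)\ge 4$ already satisfies $K_{3,2}=0$; the paper carries this out via \cite{Ap05} Theorem~3.1, arriving at the precise list in Theorem~\ref{thm: Green hexagonal} (each member of which in fact has $\mathrm{Cliff}\le 3$, or is a plane curve handled by \cite{Lo}). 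Your treatment of the $6$-gonal consequence is essentially right once the first part is repaired, though the assertion that a $6$-gonal curve with $\mathrm{Cliff}(C)=3$ must be a smooth plane septic also deserves a precise reference (e.g.\ \cite{ELMS}).
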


Theorem \ref{vo} can also be applied to solve various related
problems. For instance, using precisely Theorem \ref{vo},
the {\em  Green-Lazarsfeld Gonality Conjecture} \cite{GL86} was verified
for general $d$-gonal curves, for any $2\leq d\leq (g+2)/2$, cf. \cite{AV}, \cite{Ap05}. In a few words, this
conjecture states that the gonality of any curve can be read
off the Koszul cohomology with values in line bundles of
large degree, such as the powers of the canonical bundle.
We shall review all these results in
Subsection 4.2.

\medskip

Apart from surveying the progress made on Green's and the Gonality
Conjectures, we discuss a number of new conjectures for
syzygies of line bundles on curves. Some of these conjectures have already appeared in print (e.g. the \emph{Prym-Green
Conjecture} \cite{FaL08}, or the syzygy conjecture for special line bundles on general curves \cite{Fa06a}), others like the \emph{Minimal Syzygy Conjecture} are new and have never been formulated before.

For instance we propose the following refinement of the Green-Lazarsfeld Gonality Conjecture \cite{GL86}:
\begin{con}\label{eta}
Let $C$ be a general curve of genus $g=2d-2\geq 6$
and $\eta\in\Pic^0(C)$ a general line bundle.
Then $K_{d-4,2}(C,K_C\otimes \eta)= 0$.
\end{con}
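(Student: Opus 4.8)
The plan is to recast the statement as a vanishing theorem for the Koszul cohomology of the \emph{paracanonical kernel bundle} and then to prove that vanishing by a degeneration argument of the type Voisin used for Theorem~\ref{vo}. Since $\eta$ is non-trivial one has $h^1(C,K_C\otimes\eta)=h^0(C,\eta^{-1})=0$, so $|K_C\otimes\eta|$ embeds $C$ as a linearly normal curve of degree $2g-2$ in $\PP^{g-2}$; let $M:=M_{K_C\otimes\eta}$ be the kernel bundle, of rank $g-2=2d-4$ with $\det M=(K_C\otimes\eta)^{-1}$, sitting in
\[
0\longrightarrow M\longrightarrow H^0(C,K_C\otimes\eta)\otimes\sO_C\longrightarrow K_C\otimes\eta\longrightarrow 0 .
\]
Because $h^1(K_C\otimes\eta)=0$, the usual description of Koszul cohomology through wedge powers of $M$ gives $K_{d-4,2}(C,K_C\otimes\eta)\cong H^1\bigl(C,\wedge^{d-3}M\otimes K_C\otimes\eta\bigr)$, and Serre duality together with $\wedge^{d-3}M^{\vee}\cong\wedge^{d-1}M\otimes(\det M)^{-1}$ turns the Conjecture into the equivalent assertion $H^0\bigl(C,\wedge^{d-1}M_{K_C\otimes\eta}\otimes K_C\bigr)=0$. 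In this guise it is a twisted analogue of Green's Conjecture in genus $2d-2$ --- which, as a theorem of Voisin (Theorem~\ref{vo}), is the corresponding vanishing for the rank-$(2d-3)$ \emph{canonical} kernel bundle $M_{K_C}$ --- and it is the non-torsion counterpart of the Prym--Green Conjecture.

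Next I would reduce to a single pair. For $\eta$ outside the finite set of torsion classes of order $\le 3$, the dimensions $h^0\bigl(C,(K_C\otimes\eta)^{\otimes j}\bigr)$ for $j=1,2,3$ are locally constant, so the Koszul differentials at the spot $(d-4,2)$ form a complex of locally free sheaves over the irreducible parameter space of pairs $(C,\eta)$ and $\dim K_{d-4,2}(C,K_C\otimes\eta)$ is upper semicontinuous; it therefore suffices to exhibit one pair $(C_0,\eta_0)$ with $\eta_0\ne\sO_{C_0}$ non-torsion, with $C_0$ of maximal gonality $\gon(C_0)=d$, and with $H^0\bigl(C_0,\wedge^{d-1}M_{K_{C_0}\otimes\eta_0}\otimes K_{C_0}\bigr)=0$. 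I would realize $C_0$, its minimal pencil, and the twist on a surface: the natural candidate is a $K3$ surface $S$ whose Picard lattice contains $L,E_1,E_2$ with $L^2=2g-2$, $E_i^2=0$, $L\cdot E_i=d$, taking $C_0\in|L|$ general and $\eta_0:=\sO_{C_0}(E_1-E_2)$; then $K_{C_0}\otimes\eta_0$ extends to $\cN:=\sO_S(L+E_1-E_2)$, one identifies $M_{K_{C_0}\otimes\eta_0}$ with the restriction of the $K3$-kernel bundle $M_{\cN}$, and the restriction sequence along $C_0\in|L|$ reduces the problem to the two vanishings $H^0\bigl(S,\wedge^{d-1}M_{\cN}\otimes\sO_S(L)\bigr)=0$ and $H^1\bigl(S,\wedge^{d-1}M_{\cN}\bigr)=0$, to be handled by filtering $M_{\cN}$ along the pencil $|E_1|$ and invoking the vanishing theorems available on $K3$ surfaces, exactly as in Voisin's treatment of the untwisted case. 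An alternative model, probably more robust in keeping $\eta$ generic, is a chain of elliptic curves carrying a degeneration of the $\mathfrak g^1_d$ together with a degree-$0$ twist supported on the components, the limiting Betti number being computed by a Mayer--Vietoris bookkeeping.

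The hard part is the compatibility of the twist with the degeneration. A non-torsion $\eta\in\Pic^0(C)$ never extends over a $K3$ surface containing $C$, so one is forced either to accept the special class $\eta_0=\sO_S(E_1-E_2)|_{C_0}$ --- in which case the lattice $\langle L,E_1,E_2\rangle$ typically contains $(-2)$-classes such as $L-E_1-E_2$ that destroy the base-point-freeness of $L$ and the Brill--Noether genericity of $C_0$, so the intersection numbers must be chosen with considerable care --- or to let $\eta$ vary in a family, at the cost of the clean identification $M_{K_C\otimes\eta}=M_{\cN}|_{C_0}$. Moreover the vanishing is extremal: since the Green--Lazarsfeld non-vanishing (Theorem~\ref{thm: GL nonvan}) gives $K_{d-4,1}(C,K_C\otimes\eta)\ne 0$, there is no slack in the resolution at the spot $(d-4,\cdot)$, so the special model has to realize the vanishing on the nose. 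Controlling the cohomology of the high exterior power $\wedge^{d-1}M_{K_C\otimes\eta}$ on such a model --- equivalently, the cohomology of a twist of a Lazarsfeld--Mukai bundle --- is the technical heart of the argument, and is where I expect the real difficulty to lie.
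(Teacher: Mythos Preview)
The statement you are attempting to prove is labeled \textbf{Conjecture}~\ref{eta} in the paper, and the paper does not supply a proof: it is presented explicitly as an open problem, described as ``the sharpest vanishing statement one can make for general line bundles of degree $2g-2$ on a curve of genus $g$,'' and motivated by its role as a virtual divisor condition on $\mathfrak{Pic}_g^0$. There is therefore no ``paper's own proof'' against which to compare your proposal. Your write-up is an outline of a possible research program rather than a proof, and you yourself flag the decisive gaps (``The hard part is\ldots'', ``is where I expect the real difficulty to lie'').

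That said, your reformulation is correct and useful. The identification $K_{d-4,2}(C,K_C\otimes\eta)\cong H^1\bigl(C,\wedge^{d-3}M_{K_C\otimes\eta}\otimes K_C\otimes\eta\bigr)\cong H^0\bigl(C,\wedge^{d-1}M_{K_C\otimes\eta}\otimes K_C\bigr)^{\vee}$ follows from Theorem~\ref{prop: ker} and Serre duality exactly as you say, and the semicontinuity reduction to a single pair $(C_0,\eta_0)$ is legitimate. The paper itself records this reduction for the closely related Conjecture~\ref{levelp} (the $l$-torsion version): ``In order to prove Conjecture~\ref{levelp} it suffices to exhibit a single pair $[C,\eta]$ as above\ldots''. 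Your idea of realizing the pair on a $K3$ surface with $\mathrm{Pic}(S)\supset\langle L,E_1,E_2\rangle$ and $\eta_0=\OO_{C_0}(E_1-E_2)$ is in the right spirit, and the obstacles you identify --- the presence of destabilizing $(-2)$-classes such as $L-E_1-E_2$, the extremality of the vanishing given $K_{d-4,1}\neq 0$, and the difficulty of controlling $H^0(S,\wedge^{d-1}M_{\cN}\otimes L)$ --- are genuine and, as far as the paper is concerned, unresolved. In short: your proposal is not wrong, but it is not a proof either; it is a plausible attack on what the paper treats as a conjecture.
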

Conjecture \ref{eta} is the sharpest vanishing statement one can make for general line bundles of degree $2g-2$ on a curve of genus $g$. Since
$$\mbox{dim } K_{d-4, 2}(C, K_C\otimes \eta)=\mbox{dim } K_{d-3, 1}(K_C\otimes \eta),$$
it follows that the failure locus of Conjecture \ref{eta} is a virtual divisor in the
universal degree $0$ Picard variety $\mathfrak{Pic}_g^0\rightarrow \cM_g$. Thus it predicts that
the non-vanishing locus $$\{[C, \eta]\in \mathfrak{Pic}_g^0: K_{d-4, 2}(C, K_C\otimes \eta)\neq 0\}$$
is an "honest" divisor on $\mathfrak{Pic}_g^0$.  Conjecture \ref{eta} is also sharp in the sense that from
the Green-Lazarsfeld Non-Vanishing Theorem \ref{thm: nonvan} it follows that  $K_{d-4, 1}(C, K_C\otimes \eta)\neq 0$. Similarly, always $K_{d-3,2}(C, K_C\otimes \eta)\neq 0$ for all $[C, \eta]\in \mathfrak{Pic}_g^0$. A yet stronger conjecture is the following vanishing statement for $l$-roots of trivial bundles on curves:

\begin{con}\label{levelp}
Let $C$ be a general curve of genus $g=2d-2\geq 6$. Then for every prime $l$ and every line
bundle $\eta\in \mathrm{Pic}^0(C)-\{\OO_C\}$ satisfying $\eta^{\otimes l}=\OO_C$, we have that
$K_{d-4, 2}(C, K_C\otimes \eta)=0$.
\end{con}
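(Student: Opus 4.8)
The plan is to treat Conjecture~\ref{levelp}, for each fixed prime $l$, as an \emph{open} vanishing condition on a moduli space, and then to produce a single well-chosen test pair $(C_0,\eta_0)$ satisfying it. Work on the moduli space $\mathcal{R}_{g,l}$ of pairs $(C,\eta)$ with $C$ smooth of genus $g=2d-2$ and $\eta\in\mathrm{Pic}^0(C)$ of order exactly $l$; since $\mathrm{Sp}_{2g}(\mathbb{Z}/l)$ acts transitively on nonzero vectors of $(\mathbb{Z}/l)^{2g}$, $\mathcal{R}_{g,l}$ is irreducible. For $\eta\neq\mathcal{O}_C$ one has $h^1(K_C\otimes\eta)=h^0(\eta^{-1})=0$, hence $h^0(K_C\otimes\eta)=g-1$ and $K_C\otimes\eta$ embeds $C$ projectively normally in $\mathbb{P}^{g-2}$; the groups $K_{d-4,2}(C,K_C\otimes\eta)$ therefore fit into a coherent sheaf over $\mathcal{R}_{g,l}$ whose support (the non-vanishing locus) is closed by upper semicontinuity. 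Thus it suffices to exhibit, for each prime $l$, one smooth $C_0$ of genus $2d-2$ and one $l$-torsion bundle $\eta_0$ with $K_{d-4,2}(C_0,K_{C_0}\otimes\eta_0)=0$ — equivalently, by the dimension identity recorded after Conjecture~\ref{eta}, with $K_{d-3,1}(C_0,K_{C_0}\otimes\eta_0)=0$. Only finitely many primes then matter for a ``general curve'' statement, since a countable union of proper subvarieties of $\overline{\mathcal{M}}_g$ misses the generic point.

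For the small primes $l\in\{2,3,5,7\}$ I would take $C_0$ on a $K3$ surface $S$ carrying a symplectic automorphism $\sigma$ of order $l$ — the Nikulin surfaces when $l=2$ — with $\eta_0$ the $l$-torsion bundle induced on $C_0$ by the unramified cyclic quotient $C_0\to C_0/\langle\sigma\rangle$. One first fixes the numerical invariants so that $\mathrm{Pic}(S)$ has minimal rank, $C_0$ has arithmetic genus $2d-2$, and $(C_0,\eta_0)$ deforms, with its level structure, to a general point of $\mathcal{R}_{g,l}$; for $l=2$ this ``Nikulin surfaces carry the general Prym curve'' statement is available for $g$ in a bounded range, and establishing the analogous range for odd prime $l$ is the first subtask. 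Since $K_{C_0}\otimes\eta_0$ extends to a line bundle on $S$ (respectively is pulled back from the quotient $K3$), one can then run Voisin's proof of Theorem~\ref{vo} \emph{equivariantly}: express $K_{d-3,1}(C_0,K_{C_0}\otimes\eta_0)$ as the cohomology of an explicit complex built from a Lazarsfeld--Mukai bundle on $S$ attached to a pencil on $C_0$, transport the computation to a punctual Hilbert scheme $S^{[k]}$ with $k$ of order $d$, and use the $\langle\sigma\rangle$-action to isolate the single isotypic summand attached to $\eta_0$, so that only a piece — not all — of the cohomology of a tautological bundle on $S^{[k]}$ must be shown to vanish.

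For the remaining primes $l\geq 11$, where no $K3$ admits a symplectic automorphism of order $l$ — and as a more uniform alternative even when $l\leq 7$ — I would instead degenerate $C_0$ to a maximally nodal curve, a graph of rational components, with $\eta_0$ concentrated in the toric part $T\subset\mathrm{Pic}^0(C_0)$ of its generalized Jacobian, where $l$-torsion is plentiful. On such a curve $K_{d-4,2}(C_0,K_{C_0}\otimes\eta_0)$ becomes the cohomology of a Koszul-type complex of vector spaces governed by the combinatorics of the dual graph and the chosen character, so the vanishing reduces to an explicit, in principle algorithmic, rank computation — this is the mechanism behind the known verifications of the Prym--Green Conjecture \cite{FaL08} and of Conjecture~\ref{eta} in low genus. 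One then checks that such a $(C_0,\eta_0)$ is a smoothable limit of level-$l$ curves of genus $2d-2$ and applies semicontinuity across the boundary of $\overline{\mathcal{R}}_{g,l}$.

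The hard part, in either route, is precisely the terminal (co)homology vanishing — on $S^{[k]}$, or on the nodal Koszul complex — and the main obstacle is that this vanishing is genuinely fragile. The sibling Prym--Green Conjecture \cite{FaL08}, whose critical case for even genus $g=2d-2$ is exactly the $l=2$ instance of Conjecture~\ref{levelp}, is now known to \emph{fail} in sporadic genera (most famously $g=8$, conjecturally $g=16$), where the Prym-canonical curve acquires an unexpected quadratic syzygy; the same pathology must be expected for some other small primes. Consequently one should not hope for Conjecture~\ref{levelp} to hold uniformly in $g$: the realistic target is the vanishing for all $g=2d-2$ outside a thin exceptional set — presumably controlled by arithmetic conditions on $g$ such as high $2$-divisibility or small size — together with a geometric explanation, through the structure of $\overline{\mathcal{R}}_{g,l}$ over $\overline{\mathcal{M}}_g$, of why the exceptions arise. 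Pinning down that exceptional set, and proving the vanishing off it by a uniform argument rather than genus by genus, is the real crux; the semicontinuity reduction and the two test constructions above are the routine part.
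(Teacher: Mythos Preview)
The statement you are attempting to prove is labelled \emph{Conjecture} in the paper (Conjecture~\ref{levelp}), and the paper does \emph{not} supply a proof of it; it is presented as an open problem. The only remark the paper makes toward a proof is exactly your first reduction: ``In order to prove Conjecture~\ref{levelp} it suffices to exhibit a single pair $[C,\eta]$ as above, for which $K_C\otimes\eta\in\mathrm{Pic}^{2g-2}(C)$ satisfies property $(N_{d-4})$.'' So there is no paper proof to compare against; you have correctly identified and justified the semicontinuity/irreducibility reduction that the paper states without argument, and everything beyond that is a research programme rather than a proof.

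As a research outline your proposal is reasonable and well-informed, but it is not a proof and you do not claim otherwise. Two comments. First, you are right that the $l=2$ case (the Prym--Green Conjecture) is now known to fail for certain genera; this is knowledge that postdates the paper, which asserts that the $l=2$ case ``has been checked using \texttt{Macaulay2} for $g\leq 14$''. Your conclusion that Conjecture~\ref{levelp} as literally stated is almost certainly false, and that the honest target is vanishing off a sparse exceptional set, is the correct contemporary view --- but it means that what you have written is an argument that the conjecture \emph{cannot} be proved as stated, not a proof of it. Second, even for the non-exceptional genera, both of your proposed constructions leave the genuinely hard step (the equivariant Voisin-type vanishing on $S^{[k]}$, or the combinatorial rank computation on the nodal degeneration) entirely open, as you yourself acknowledge in your final paragraph. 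So the proposal should be read as a plausible attack plan, not as a proof sketch with routine details omitted.
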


In order to prove Conjecture \ref{levelp} it suffices to exhibit a single pair $[C, \eta]$ as above, for which
$K_C\otimes \eta\in \mbox{Pic}^{2g-2}(C)$ satisfies property $(N_{d-4})$. The case most studied so far is that of level $l=2$, when one recovers the \emph{Prym-Green Conjecture} \cite{FaL08}
which has been checked using Macaulay2 for $g\leq 14$. The Prym-Green Conjecture is a subtle statement which for small values of $g$ is equivalent to the Prym-Torelli Theorem, again see \cite{FaL08}.  Since the non-vanishing condition $K_{d-4, 2}(C, K_C\otimes \eta)\neq 0$ is divisorial in moduli,  Conjecture \ref{levelp} is of great help in the study of the birational geometry of the  compactification $\rr_{g, l}:=\mm_g(\mathcal{B}\mathbb Z_l)$ of the moduli space  $\mathcal{R}_{g, l}$ classifying pairs $[C, \eta]$, where $[C]\in \cM_g$ and $\eta\in \mbox{Pic}^0(C)$ satisfies $\eta^{\otimes l}=\OO_C$.
\vskip 5pt

Concerning both Conjectures \ref{eta} and \ref{levelp}, it is an open
problem to find an analogue of the Clifford index of the curve, in the sense that the classical Green Conjecture is not only a Koszul cohomology vanishing statement but also allows one to read off the Clifford index from a non-vanishing statement for $K_{p, 2}(C, K_C)$. It is an interesting open question to find a \emph{Prym-Clifford index}
playing the same role like the original $\mbox{Cliff}(C)$ in (\ref{grr}) and to describe it in terms of the corresponding Prym varieties: Is there a geometric characterization of those Prym varieties $\mathcal{P}_g([C, \eta])\in \cA_{g-1}$ corresponding to pairs $[C, \eta]\in \cR_g$ with $K_{p, 2}(C, K_C\otimes \eta)\neq 0$?
\vskip 4pt

Another recent development on syzygies of curves came from a completely different direction, with the realization that loci in the moduli space $\cM_g$
consisting of curves having exceptional syzygies, can be used effectively to answer questions about the birational
geometry of the moduli space $\mm_g$ of stable curves of genus $g$, cf. \cite{FaPo05}, \cite{Fa06a}, \cite{Fa06b},
in particular, to produce infinite series of effective divisors on $\mm_g$ violating the Harris-Morrison Slope Conjecture \cite{HaMo}. We recall that the slope $s(D)$ of an  effective divisor $D$ on
$\mm_g$ is defined as the smallest rational number $a/b$ with $a, b\geq 0$,
such that the class
$a \lambda-b(\delta_0+\cdots+\delta_{[g/2]})-[D]\in \mathrm{Pic}(\mm_g)$ is an effective
$\mathbb Q$-combination of boundary divisors. The Slope Conjecture \cite{HaMo} predicts a lower bound for the slope of effective divisors on $\mm_g$
 $$s(\mm_g):=\mbox{inf}_{D\in \mbox{Eff}(\mm_g)} \ s(D)\geq 6+\frac{12}{g+1}$$
with equality precisely when $g+1$ is composite; the quantity $6+12/(g+1)$ is the slope of the Brill-Noether divisors on $\mm_g$, in case such divisors exist.
A first counterexample to the Slope Conjecture was found in \cite{FaPo05}: The
locus $$\K_{10}:=\{[C]\in \cM_g: C \mbox{ lies on a } K3 \mbox{ surface}\}$$
can be interpreted as being set-theoretically equal to the locus of curves $[C]\in \cM_{10}$ carrying a linear series
$L \in W^4_{12}(C)$ such that the multiplication map
$$\nu_2(L):\mbox{Sym}^2 H^0(C, L)\rightarrow H^0(C, L^{\otimes 2})$$ is not an isomorphism, or equivalently
$K_{0, 2}(C, L)\neq 0$. The main advantage of this  Koszul-theoretic description is that it provides a characterization of the $K3$ divisor $\K_{10}$ in a way that makes no reference to $K3$ surfaces and can be easily generalized to other genera. Using this characterization  one shows that $s(\kk_{10})=7<78/11$, that is,
$\kk_{10}\in \mbox{Eff}(\mm_{10})$ is a counterexample to the Slope Conjecture.

 Koszul cohomology provides an effective way
of constructing cycles on $\cM_g$. Under suitable numerical
conditions, loci of the type
$$\cZ_{g, 2}:=\{[C]\in \cM_g: \exists L\in W^r_d(C) \mbox{ such that } K_{p, 2}(C, L)\neq 0\}$$ are virtual divisors on $\cM_g$,
that is, degeneracy loci of morphisms between vector bundles of the same rank over $\cM_g$. The problem of extending
these vector bundes over $\mm_g$ and computing the virtual classes of the resulting degeneracy loci is in general daunting, but has been solved successfully in the case $\rho(g, r, d)=0$, cf. \cite{Fa06b}.
Suitable vanishing statements of the Koszul cohomology for general curves (e.g. Conjectures \ref{eta}, \ref{strongmaxrank}) show that, when applicable, these virtual Koszul divisors are actual divisors and they are
quite useful in specific problems such as the Slope Conjecture or showing that certain moduli spaces
of curves (with or without level structure) are of general type, see \cite{Fa08}, \cite{FaL08}.
A picturesque application of the Koszul technique in the study of parameter spaces is the following result about the birational type
of the moduli space of Pryms varieties $\overline{\mathcal{R}}_g=\overline{\mathcal{R}}_{g, 2}$, see \cite{FaL08}:
\begin{thm}\label{fl}
The moduli space $\overline{\mathcal{R}}_g$ is of general type for $g\geq 13$ and $g\neq 15$.
\end{thm}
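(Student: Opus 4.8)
The plan is to follow the standard blueprint for showing a moduli space is of general type: compute the canonical class $K_{\rr_g}$ of the coarse space, show that pluricanonical forms extend over its singularities, and exhibit an effective divisor on $\rr_g$ of small enough slope to force $K_{\rr_g}$ to be big. First I would describe $\Pic(\rr_g)_{\mathbb{Q}}$, generated by the Hodge class $\lambda$, by the boundary classes $\delta_0',\delta_0'',\delta_0^{\mathrm{ram}}$ lying over $\delta_0\subset\mm_g$ (reflecting the three possible behaviours of a level-$2$ structure on a $1$-nodal curve), and by the boundary classes over $\delta_i$ for $i\ge 1$. Since the forgetful map $\pi\colon\rr_g\to\mm_g$ is étale over $\cM_g$ and simply ramified exactly along $\delta_0^{\mathrm{ram}}$, Riemann--Hurwitz together with the Harris--Mumford formula $K_{\mm_g}=13\lambda-2\delta_0-3\delta_1-2\delta_2-\cdots$ and the pullback relations $\pi^*\delta_0=\delta_0'+\delta_0''+2\delta_0^{\mathrm{ram}}$, $\pi^*\delta_i=\delta_i+\delta_{g-i}+\cdots$ yields an explicit formula of the shape $K_{\rr_g}=13\lambda-2(\delta_0'+\delta_0'')-3\delta_0^{\mathrm{ram}}-2\sum_{i\ge 1}(\delta_i+\delta_{g-i})-\cdots$, with every boundary coefficient at least $2$.

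Second I would treat the singularities of $\rr_g$: a priori pluricanonical forms live only on the smooth locus, and one needs $\kappa(\rr_g)=\kappa(\rr_g^{\reg},K)$. Here I would invoke the extension theorems for finite covers of $\mm_g$ together with the explicit description of the singular locus of $\rr_g$ (in the spirit of Harris--Mumford and of Ludwig's work): away from a codimension $\ge 2$ locus attached to curves with extra automorphisms --- chiefly the hyperelliptic and elliptic-tail type loci, where the Reid--Tai criterion for the relevant cyclic actions on versal deformation space must be checked individually --- the singularities are canonical, while on the remaining locus one argues separately that forms extend. This step costs only a mild lower bound on $g$ and is not the source of the sharp numerical restriction.

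The decisive and hardest step is to produce an effective divisor $\overline{D}\subset\rr_g$ of small slope. I would take for $\overline{D}$ a Koszul-type locus sensitive to the Prym data --- depending on the parity of $g$, the Prym--Green-type divisor $\{[C,\eta]\colon K_{p,2}(C,K_C\otimes\eta)\neq 0\}$ at the critical index, or a twisted Brill--Noether divisor measuring the interaction of a minimal pencil on $C$ with $\eta$. Two things must be established: (i) that $\overline{D}$ is a genuine divisor, i.e.\ the degeneracy locus of a morphism between two vector bundles of equal rank on $\rr_g$ that is \emph{generically} an isomorphism --- for which it suffices to exhibit one well-chosen pair $[C,\eta]$ (e.g.\ $\eta$ a $2$-torsion point on a curve lying on a $K3$ surface, or on a suitable nodal or chain-type curve) where the governing Koszul group vanishes, the relevant input being precisely Voisin's Theorem \ref{vo} and the pencil-based machinery underlying Theorem \ref{aprodu1}; and (ii) that one can compute $[\overline{D}]=a\lambda-b_0'\delta_0'-b_0''\delta_0''-b_0^{\mathrm{ram}}\delta_0^{\mathrm{ram}}-\sum_i b_i\delta_i$ with all boundary coefficients strictly positive, which rests on a careful test-curve analysis on $\rr_g$, following the bundles along transverse families through each boundary divisor and tracking the admissible-cover degenerations of both $C$ and $\eta$.

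Finally I would assemble the pieces: for a suitable small $\epsilon>0$ one writes $K_{\rr_g}=\epsilon H+s\,[\overline{D}]+(\text{effective combination of }\lambda\text{ and the boundary divisors})$ with $H$ an ample $\mathbb{Q}$-class and $s>0$, which is possible exactly when the slope inequalities $a/b_j\le 13/c_j$ hold for every boundary coefficient $c_j$ of $K_{\rr_g}$, strictly for at least one $j$; this forces $K_{\rr_g}$ to be big, and the extension result of the second step then upgrades bigness to $\rr_g$ being of general type. The main obstacle is thus concentrated in the last two steps: computing the boundary coefficients of the Koszul divisor $\overline{D}$ correctly (the test-curve bookkeeping on $\rr_g$ is genuinely delicate because of its richer boundary), and then verifying that the resulting slopes beat $13/c_j$ against \emph{every} boundary divisor. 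These inequalities turn out to hold precisely for $g\ge 13$ with the single exception $g=15$, where the slope of the best available Koszul divisor just exceeds the threshold against one of the boundary divisors --- which is exactly the exception recorded in the statement.
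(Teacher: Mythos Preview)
Your overall architecture is exactly the one the paper (following \cite{FaL08}) uses: compute $K_{\rr_g}$ via Riemann--Hurwitz for $\pi:\rr_g\to\mm_g$, verify that pluricanonical forms extend over the singularities of $\rr_g$, and then produce an effective divisor of small slope to write $K_{\rr_g}$ as ample plus effective. The paper's outline also splits by parity: for $g=2d-2$ the divisor is the Prym--Green failure locus $\{[C,\eta]:K_{d-4,2}(C,K_C\otimes\eta)\neq 0\}$, while for odd $g=2d-1$ it is a \emph{mixed} Koszul locus built from the groups $K_{p,q}(C,K_C\otimes\eta,K_C)$, i.e.\ Koszul cohomology of $K_C$ with values in $K_C\otimes\eta$. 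Your odd-genus suggestion of a ``twisted Brill--Noether divisor measuring the interaction of a minimal pencil with $\eta$'' is not the divisor actually used; the mixed Koszul construction is what makes the class computation tractable over the boundary of $\rr_g$.

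There is, however, a genuine gap in your step (i). You assert that exhibiting a single pair $[C,\eta]$ with the relevant Koszul vanishing follows from Voisin's Theorem~\ref{vo} together with the machinery behind Theorem~\ref{aprodu1}. This is not so. Voisin's results concern $K_{p,2}(C,K_C)$; the vanishing $K_{d-4,2}(C,K_C\otimes\eta)=0$ for a general $[C,\eta]\in\cR_{2d-2}$ is precisely the Prym--Green Conjecture (Conjecture~\ref{levelp} for $l=2$), which is \emph{not} a formal consequence of Green's Conjecture and, as the paper notes, is established in \cite{FaL08} only by direct \texttt{Macaulay2} verification in each relevant genus $g\le 14$. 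There is no known $K3$-type or degeneration argument that reduces the Prym--Green vanishing to Theorem~\ref{vo}. So the divisoriality of $\overline{D}$ in the even-genus range --- which is essential, since otherwise the virtual class computation proves nothing --- rests on case-by-case computer verification rather than on the inputs you cite. Your sketch is correct as a strategy but misidentifies where the real work lies; the exception at $g=15$, incidentally, arises because no suitable divisor of sufficiently small slope is available there, not merely because a slope inequality narrowly fails.
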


The proof of Theorem \ref{fl} depends on the parity of $g$. For $g=2d-2$, it boils down to calculating the class of the compactification in $\rr_g$ of the failure
locus of the Prym-Green Conjecture, that is, of the locus $\{[C, \eta]\in \cR_{2d-2}: K_{d-4, 2}(C, K_C\otimes \eta)\neq 0\}$. For odd $g=2d-1$, one computes the class of a "mixed" Koszul cohomology locus in $\rr_g$ defined in terms of Koszul cohomology groups of $K_C$ with values in $K_C\otimes \eta$.
\vskip 8pt

The outline of the paper is as follows. In Section  2 we review
the definition of Koszul cohomology as introduced by M. Green \cite{Gr84a}
and discuss basic facts. In Section 3 we recall the construction of (virtual) Koszul cycles on $\mm_g$ following \cite{Fa06a} and \cite{Fa06b} and explain how their cohomology classes can be calculated. In Section 4 we discuss a number of conjectures on syzygies of curves, starting with Green's Conjecture and the Gonality Conjecture and continuing with the Prym-Green Conjecture. We end by proposing in Section 5 a strong version of the Maximal Rank Conjecture.

Some results stated in \cite{Ap04}, \cite{Ap05}
are discussed here in greater detail.
Other results are new (see Theorems \ref{thm: Green large genus} and \ref{thm: GL}).

\section{Koszul cohomology}

\subsection{Syzygies}

Let $V$ be an $n$-dimensional complex vector space, $S:=S(V)$ the symmetric
algebra of $V$, and $X\subset\mathbb{P}V^\vee:=\mbox{\rm Proj}(S)$
a non-degenerate subvariety, and denote by $S(X)$ the
homogeneous coordinate ring of $X$. To the embedding of
$X$ in $\mathbb{P} V^\vee$, one associates the
{\em Hilbert function}, defined by
$$
h_X(d):=\mbox{dim}_{\mathbb C}\left(S(X)_d\right)
$$
for any positive integer $d$. A remarkable property of $h_X$
is its polynomial behavior for large values of $d$. It is a consequence
of the existence of a {\em graded
minimal resolution} of the $S$-module $S(X)$, which is an exact sequence
$$
0\rightarrow E_s\rightarrow\ldots\rightarrow E_2\rightarrow E_1\rightarrow S\rightarrow S(X)\rightarrow 0
$$
with
$$
E_p=\mathop\bigoplus\limits_{j>p}S(-j)^{\beta_{pj}(X)}.
$$

The Hilbert function of $X$ is then given by
\begin{equation}
\label{eqn: Hilbert}
h_X(d)=\mathop\sum\limits_{p,j} (-1)^p\beta_{pj}(X) \left(\begin{array}{c}{n+d-j}\\n\end{array}\right),
\end{equation}
where
$$
\left(\begin{array}{c}{t}\\n\end{array}\right)=\frac{t(t-1)\dots(t-n+1)}{n\; !},\; \mbox{\rm for any } t\in
{\bf R},
$$
and note that the expression on the right-hand-side is polynomial for large
$d$. This reasoning yields naturally to the definition of {\em syzygies of $X$},
which are the graded components of the graded $S$-modules
$E_p$. The integers $$\beta_{pj}(X)=\mbox{dim}_{\mathbb C} \mbox{Tor}^j(S(X), \mathbb C)_p$$
are called the {\em graded Betti
numbers of $X$} and determine completely the Hilbert function,
according to formula~(\ref{eqn: Hilbert}). Sometimes we also write $b_{i, j}(X):=\beta_{i, i+j}(X)$.

One main difficulty in developing  syzygy theory was to
find effective geometric methods for computing these invariants.
In the eighties, M. Green and R. Lazarsfeld published a series of papers,
\cite{Gr84a}, \cite{Gr84b}, \cite{GL84},
\cite{GL86} that shed a new light on syzygies.
Contrary to the classical point of view, they
look at integral closures of the homogeneous coordinates rings,
rather than at the rings themselves. This
approach, using intensively the language of Koszul cohomology,
led to a number of beautiful geometrical results with numerous applications in classical algebraic geometry
as well as moduli theory.

\subsection{Definition of Koszul cohomology}
Throughout this paper, we follow M. Green's approach to Koszul cohomology
\cite{Gr84a}.
The general setup is the following. Suppose $X$ is
a complex projective variety, $L\in\Pic(X)$ a line bundle, $\mathcal F$
is a coherent sheaf on $X$, and $p,q\in\mathbb Z$. The
canonical contraction map
$$
\wedge^{p+1}H^0(X, L)\otimes H^0(X, L)^\vee\to \wedge^pH^0(X, L),
$$
acting as
$$
(s_0\wedge\ldots\wedge s_p)\otimes \sigma\mapsto
\sum_{i=0}^p(-1)^i\sigma(s_i)(s_0\wedge\ldots\widehat{i}\ldots\wedge s_p),
$$
and the multiplication map
$$
H^0(X, L)\otimes H^0(X, \mathcal F\otimes L^{q-1})\to H^0(X, \mathcal F\otimes L^q),
$$
define together a map
$$
\wedge^{p+1}H^0(X, L)\otimes H^0(X, \mathcal F\otimes L^{q-1})\to
\wedge^pH^0(X, L)\otimes H^0(X, \mathcal F\otimes L^q).
$$

In this way, we obtain a complex (called the {\em Koszul complex})
$$
\wedge^{p+1}H^0(L)\otimes H^0(\mathcal F\otimes L^{q-1})\to
\wedge^pH^0(L)\otimes H^0(\mathcal F\otimes L^q)\to
\wedge^{p-1}H^0(L)\otimes H^0(\mathcal F\otimes L^{q+1}),
$$
whose cohomology at the middle-term is denoted by
$K_{p,q}(X,\mathcal F,L)$. In the particular case
$\mathcal F\cong\mathcal O_X$, to ease the notation,
one drops $\mathcal O_X$ and writes directly $K_{p,q}(X,L)$
for Koszul cohomology.

\medskip

There are some samples of direct applications of Koszul
cohomology:

\begin{ex}
 {\rm If $L$ is ample, then $L$ is normally generated if and only if
$K_{0,q}(X,L)=0$ for all $q\ge 2$. This fact follows directly from
the definition.}
\end{ex}

\begin{ex}
\label{ex: cohomology}
 {\rm If $L$ is globally generated with $H^1(X, L)=0$, then
$$
H^1(X, \mathcal O_X)\cong K_{h^0(L)-2,2}(X,L)
$$
see \cite{Aprodu-Nagel: book}. In particular,
the genus of a curve can be read off (vanishing of) Koszul cohomology
with values in non-special bundles. More generally, under suitable
vanishing assumptions on $L$, all the groups $H^i(X, \mathcal O_X)$ can
be computed in a similar way, and likewise $H^i(X, \mathcal F)$ for
an arbitrary coherent sheaf $\mathcal F$, cf. \cite{Aprodu-Nagel: book}.}
\end{ex}

\begin{ex}
\label{ex: CM}
 {\rm If $L$ is very ample the \emph{Castelnuovo-Mumford regularity}
can be recovered from Koszul cohomology. Specifically,
if $\mathcal F$ is a coherent sheaf on $X$, then
$$
\reg_L(\mathcal F)=\min\{m: K_{p,m+1}(X,\mathcal F,L)=0,\ \mbox{for all}\ p\}.
$$

As a general principle, any invariant that involves multiplication
maps is presumably related to Koszul cohomology.}
\end{ex}

\medskip

Very surprisingly, Koszul cohomology interacts
much closer to the geometry of the variety than might have been expected. This phenomenon
was discovered by Green and Lazarsfeld
\cite[Appendix]{Gr84a}:

\begin{thm}[Green-Lazarsfeld]
 \label{thm: GL nonvan}
Suppose $X$ is a smooth variety and consider a decomposition $L=L_1\otimes L_2$
with $h^0(X, L_i)=r_i+1\ge 2$ for $i\in \{1,2\}$. Then $K_{r_1+r_2-1,1}(X,L)\ne 0$.
\end{thm}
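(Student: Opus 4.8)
The plan is to produce an explicit nonzero Koszul cocycle in $\wedge^{r_1+r_2-1}H^0(X,L)\otimes H^0(X,L)$ built out of the two sub-linear-systems. Pick bases $s_0,\dots,s_{r_1}\in H^0(X,L_1)$ and $t_0,\dots,t_{r_2}\in H^0(X,L_2)$, and regard their products as sections of $L=L_1\otimes L_2$. The natural guess for a generator of $K_{r_1+r_2-1,1}(X,L)$ is the class of an element of the form
\begin{equation*}
\sum_{i=0}^{r_1}\sum_{j=0}^{r_2}(-1)^{i+j}\,(s_0t_0)\wedge\cdots\widehat{s_it_?}\cdots\wedge(\cdots)\otimes(s_it_j),
\end{equation*}
i.e. one assembles the "diagonal" wedge of all but one of the $r_1+r_2+1$ sections $s_0t_0,s_1t_0,\dots,s_{r_1}t_0,s_{r_1}t_1,\dots,s_{r_1}t_{r_2}$ and tensors it appropriately with the missing products; more cleanly, one uses the two Koszul-type syzygies coming from the relations $s_i t_j \cdot s_k t_l = s_k t_j\cdot s_i t_l$ among these products. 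First I would set up this algebra carefully: the bilinear "multiplication" $H^0(L_1)\otimes H^0(L_2)\to H^0(L)$ gives, by a standard construction, a map on Koszul complexes, and the nonzero class in $K_{r_1+r_2-1,1}$ is the image of the obvious generator of $K_{r_1-1,1}(\mathbb{P}^{r_1},\mathcal{O}(1))\otimes K_{r_2-1,1}$-type data (essentially two copies of the Euler/Koszul relation).

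The cleanest route is the one in the Green–Lazarsfeld appendix: reduce to the model case. Consider the morphism $\varphi=(\varphi_{L_1},\varphi_{L_2})\colon X\to \mathbb{P}^{r_1}\times\mathbb{P}^{r_2}$ followed by the Segre embedding into $\mathbb{P}^{N}$, $N=(r_1+1)(r_2+1)-1$; then $L=\varphi^*\mathcal{O}(1)$ where $\mathcal{O}(1)$ restricted from $\mathbb{P}^N$. Koszul classes pull back, and $H^0$ of $L$ receives $H^0(\mathbb{P}^{r_1}\times\mathbb{P}^{r_2},\mathcal{O}(1,1))$; so it suffices to exhibit a nonzero element of $K_{r_1+r_2-1,1}$ for $\mathcal{O}(1,1)$ on $\mathbb{P}^{r_1}\times\mathbb{P}^{r_2}$ and check its pullback stays nonzero. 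On the product, $H^0(\mathcal{O}(1,1))=H^0(\mathcal{O}_{\mathbb{P}^{r_1}}(1))\otimes H^0(\mathcal{O}_{\mathbb{P}^{r_2}}(1))$, and the desired class is the external product of the canonical generators of $K_{r_1-1,1}(\mathbb{P}^{r_1},\mathcal{O}(1))$ and $K_{r_2-1,1}(\mathbb{P}^{r_2},\mathcal{O}(1))$ under the Künneth-type map for Koszul cohomology — both of those factor groups being nonzero (indeed one-dimensional) is the Enriques–Petri-type computation for the rational normal scroll / Segre, and is classical. Then I would invoke the injectivity of pullback on the relevant graded piece: because $h^0(X,L_i)=r_i+1$ exactly, i.e. $\varphi_{L_i}$ is nondegenerate, the map $H^0(\mathbb{P}^{r_1}\times\mathbb{P}^{r_2},\mathcal{O}(1,1))\to H^0(X,L)$ is injective, and a diagram chase (using $X$ reduced/irreducible, so no sections die) shows the cocycle remains a nonzero cocycle that is not a coboundary upstairs.

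The key steps, in order: (1) formulate the external/cup-product map $K_{a,1}(\mathbb{P}^{r_1},\mathcal{O}(1))\otimes K_{b,1}(\mathbb{P}^{r_2},\mathcal{O}(1))\to K_{a+b,1}(\mathbb{P}^{r_1}\times\mathbb{P}^{r_2},\mathcal{O}(1,1))$ and verify it is compatible with the product structure; (2) recall/recompute that $K_{r_i-1,1}(\mathbb{P}^{r_i},\mathcal{O}(1))\ne 0$ (the linear strand of the Koszul complex of a point, i.e. the top Koszul relation $\sum(-1)^i x_i\,\widehat{x_i}$); (3) show the external product of these two generators is nonzero in $K_{r_1+r_2-1,1}$ of the product — this is where I expect to actually have to write down the Koszul cocycle explicitly and argue it is not a boundary, using the Künneth decomposition of the Koszul complex of $\mathcal{O}(1,1)$ as a tensor of the two sub-Koszul complexes; (4) pull back along $\varphi$ and check nonvanishing survives, using nondegeneracy (the hypothesis $h^0(L_i)=r_i+1$) so that the monomials $s_it_j$ are linearly independent in $H^0(X,L)$ exactly when the $s_it_j$ are linearly independent in $H^0(\mathcal{O}(1,1))$ — they are, being a basis. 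I expect the main obstacle to be step (3): proving that the natural external product class does not become a coboundary, i.e. that the Künneth map is injective on the pieces we need. The honest way to handle this is to produce, by hand, a linear functional on $\wedge^{r_1+r_2-1}H^0(L)\otimes H^0(L)$ vanishing on all Koszul coboundaries but not on our cocycle — concretely, pair against a fixed decomposable multivector and a point evaluation — which reduces the whole problem to a finite, if slightly intricate, combinatorial/linear-algebra verification on the Segre variety; alternatively one cites the Koszul Künneth formula from \cite{Aprodu-Nagel: book}. Once step (3) is in place, steps (1),(2),(4) are routine, and the theorem follows.
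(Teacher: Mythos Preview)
The paper does not actually prove this theorem; it is stated with attribution to the appendix of \cite{Gr84a} and used as a black box throughout. So there is no proof here to compare against, and your outline is essentially a reconstruction of the original Green--Lazarsfeld argument.

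That said, step (4) contains a genuine error. You assert that nondegeneracy of the maps $\varphi_{L_i}$ forces the products $s_it_j$ to be linearly independent in $H^0(X,L)$, i.e.\ that the multiplication map $H^0(X,L_1)\otimes H^0(X,L_2)\to H^0(X,L)$ is injective. This is false in general. For a concrete counterexample take $X$ a hyperelliptic curve and $L_1=L_2$ the $\mathfrak g^1_2$; then $r_1=r_2=1$, the four products $s_it_j$ land in $H^0(X,2\,\mathfrak g^1_2)$, which is $3$-dimensional, and they are visibly dependent. More broadly, whenever $h^0(X,L)<(r_1+1)(r_2+1)$ --- which is the typical situation --- your injectivity claim fails, and with it the naive ``pull back the Koszul class from the Segre'' argument: the map $H^0(\mathbb P^{r_1}\times\mathbb P^{r_2},\OO(1,1))\to H^0(X,L)$ has a kernel, and Koszul cohomology has no straightforward functoriality through a non-injective map of section spaces.

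The repair is to drop the Segre reduction and work directly on $X$, as Green--Lazarsfeld do. One writes the cocycle using only the $r_1+r_2+1$ sections
\[
s_0t_0,\ \ldots,\ s_{r_1}t_0,\ s_{r_1}t_1,\ \ldots,\ s_{r_1}t_{r_2},
\]
rather than all $(r_1+1)(r_2+1)$ products. For a suitable (generic) choice of bases these $r_1+r_2+1$ sections \emph{are} independent on any integral $X$ --- a short argument with zero divisors --- and that is all one needs to show the explicit class is nonzero and not a coboundary. Alternatively, the slickest route is via the kernel-bundle description (Theorem~\ref{prop: ker} in the paper): one produces the class as the image of a natural map built from $M_{L_1}$ and $M_{L_2}$, bypassing the linear-independence bookkeeping entirely. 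Either way, steps (1)--(3) of your plan become unnecessary once you give up trying to transport the class from $\mathbb P^{r_1}\times\mathbb P^{r_2}$.
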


In other words, non-trivial geometry implies non-trivial Koszul cohomology.
We shall discuss the case of curves, which is the most relevant, and
then some consequences in Section 4.

Many problems in the theory of syzygies involve
vanishing/nonvanishing of Koszul cohomology.
One useful definition is the following.

\begin{defn}
 \label{defn: Np}
An ample line bundle $L$ on a projective variety is said to satisfy the
property $(N_p)$ if and only if $K_{i,q}(X,L)=0$ for all $i\le p$ and $q\ge 2$.
\end{defn}

From the geometric view-point, the property $(N_p)$ means that
$L$ is normally generated, the ideal of $X$
in the corresponding embedding is generated by quadrics, and
all the syzygies up to order $p$ are linear. In many cases,
for example canonical curves, or $2$-regular varieties, the
property $(N_p)$ reduces to the single condition $K_{p,2}(X.L)=0$, see e.g.
\cite{Ein}.
This phenomenon justifies the study of various loci given
by the nonvanishing of $K_{p,2}$, see Section 4.

\subsection{Kernel bundles}
The proofs of the facts discussed in
examples \ref{ex: cohomology} and \ref{ex: CM}
use the kernel bundles description which is due
to Lazarsfeld \cite{La1}:
Consider $L$ a globally generated line bundle
on the projective variety $X$, and set
$$
M_L:=\mathrm{Ker}\left(H^0(X, L)\otimes\mathcal O_X
\stackrel{\mathrm{ev}}{\longrightarrow}L\right).
$$
Note that $M_L$ is a vector bundle on $X$ of rank $h^0(L)-1$. For
any coherent sheaf $\mathcal F$ on $X$, and
integer numbers $p\ge 0$, and $q\in \mathbb Z$, we have a
short exact sequence on $X$
$$
0\to \wedge^{p+1}M_L\otimes \mathcal F\otimes L^{q-1}
\to \wedge^{p+1}H^0(L)\otimes \mathcal F\otimes L^{q-1}\to
\wedge^{p}M_L\otimes \mathcal F\otimes L^{q}\to 0.
$$

Taking global sections, we remark that the Koszul
differential factors through the map
$$
\wedge^{p+1}H^0(X, L)\otimes H^0(X, \mathcal F\otimes L^{q-1})
\to H^0(X, \wedge^p M_L\otimes \mathcal F\otimes L^q),
$$
hence we have the following characterization of
Koszul cohomology, \cite{La1}:

\begin{thm}[Lazarsfeld]
\label{prop: ker}
Notation as above. We have
\begin{eqnarray*}
K_{p,q}(X,\mathcal F,L) \cong
\coker(\wedge^{p+1}H^0(L)\otimes H^0(\mathcal F\otimes L^{q-1})
\to H^0(\wedge^p M_L\otimes \mathcal F\otimes L^q)) \\
\cong
\ker(H^1(X,\wedge^{p+1}M_L\otimes\mathcal F\otimes L^{q-1})
\to\wedge^{p+1}H^0(L)\otimes H^1(\mathcal F\otimes L^{q-1})).
\end{eqnarray*}
\end{thm}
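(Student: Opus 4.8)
The plan is to derive both isomorphisms directly from the short exact sequence of kernel bundles
$$
0\to \wedge^{p+1}M_L\otimes \mathcal F\otimes L^{q-1}
\to \wedge^{p+1}H^0(L)\otimes \mathcal F\otimes L^{q-1}\to
\wedge^{p}M_L\otimes \mathcal F\otimes L^{q}\to 0
$$
together with the observation already recorded in the text that the Koszul differential factors through $\wedge^{p+1}H^0(L)\otimes H^0(\mathcal F\otimes L^{q-1})\to H^0(\wedge^p M_L\otimes \mathcal F\otimes L^q)$. First I would tensor the above exact sequence (for the index $p$) by taking global sections and write out the long exact cohomology sequence; simultaneously I would use the analogous exact sequence with $p$ replaced by $p-1$, which produces the map $\wedge^p H^0(L)\otimes H^0(\mathcal F\otimes L^q)\to H^0(\wedge^{p-1}M_L\otimes\mathcal F\otimes L^{q+1})$. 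The point is that the classical Koszul complex at the spot $\wedge^p H^0(L)\otimes H^0(\mathcal F\otimes L^q)$ can be compared termwise with the factored maps.

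For the first isomorphism, I would argue that because the composite
$$
\wedge^{p+1}H^0(L)\otimes H^0(\mathcal F\otimes L^{q-1})\to H^0(\wedge^p M_L\otimes\mathcal F\otimes L^q)\hookrightarrow \wedge^p H^0(L)\otimes H^0(\mathcal F\otimes L^q)
$$
is the Koszul differential, and because the inclusion $H^0(\wedge^p M_L\otimes\mathcal F\otimes L^q)\hookrightarrow \wedge^p H^0(L)\otimes H^0(\mathcal F\otimes L^q)$ has image exactly the kernel of the next Koszul differential (this is the key diagram-chase: a section of $\wedge^p H^0(L)\otimes \mathcal F\otimes L^q$ lifts to $\wedge^p M_L\otimes\mathcal F\otimes L^q$ iff it dies in $\wedge^{p-1}H^0(L)\otimes\mathcal F\otimes L^{q+1}$, i.e. one chases the $(p-1)$-level sequence), it follows that the cohomology of the Koszul complex at the middle is precisely $\coker(\wedge^{p+1}H^0(L)\otimes H^0(\mathcal F\otimes L^{q-1})\to H^0(\wedge^p M_L\otimes\mathcal F\otimes L^q))$. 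This identifies $K_{p,q}(X,\mathcal F,L)$ with the first stated cokernel.

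For the second isomorphism, I would take the long exact sequence in cohomology attached to the kernel-bundle sequence at level $p$:
$$
\wedge^{p+1}H^0(L)\otimes H^0(\mathcal F\otimes L^{q-1})\to H^0(\wedge^p M_L\otimes\mathcal F\otimes L^q)\to H^1(\wedge^{p+1}M_L\otimes\mathcal F\otimes L^{q-1})\to \wedge^{p+1}H^0(L)\otimes H^1(\mathcal F\otimes L^{q-1}).
$$
The cokernel of the first arrow injects, via the connecting homomorphism, into $H^1(\wedge^{p+1}M_L\otimes\mathcal F\otimes L^{q-1})$, and by exactness its image is exactly the kernel of the last arrow. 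Combining with the first isomorphism yields $K_{p,q}(X,\mathcal F,L)\cong\ker\bigl(H^1(X,\wedge^{p+1}M_L\otimes\mathcal F\otimes L^{q-1})\to\wedge^{p+1}H^0(L)\otimes H^1(\mathcal F\otimes L^{q-1})\bigr)$, as claimed.

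The main obstacle is the diagram chase in the first part: one must verify carefully that the image of $H^0(\wedge^p M_L\otimes\mathcal F\otimes L^q)$ inside $\wedge^p H^0(L)\otimes H^0(\mathcal F\otimes L^q)$ coincides with the kernel of the outgoing Koszul differential, not merely that it is contained in it. This requires knowing that the Koszul differential $\wedge^p H^0(L)\otimes\mathcal F\otimes L^q\to\wedge^{p-1}H^0(L)\otimes\mathcal F\otimes L^{q+1}$ has kernel (as a sheaf map) equal to $\wedge^p M_L\otimes\mathcal F\otimes L^q$ — equivalently, exactness of the Koszul complex of sheaves built from $M_L$, which is a standard but essential input. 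Everything else is formal manipulation of long exact sequences, so I would state that sheaf-level exactness explicitly and then let the cohomology bookkeeping run.
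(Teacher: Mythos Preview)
Your proposal is correct and is precisely the standard argument that the paper's setup is pointing toward. The paper itself does not write out a proof: it records the short exact sequence of kernel bundles and the factoring observation, then simply states the theorem with a reference to \cite{La1}. Your two steps---identifying $H^0(\wedge^p M_L\otimes\mathcal F\otimes L^q)$ with the kernel of the outgoing Koszul differential via left-exactness of $H^0$ applied to the level-$(p-1)$ sequence, and then reading off both descriptions from the long exact cohomology sequence of the level-$p$ sequence---are exactly how one completes the argument, and your identification of the one nontrivial point (that the inclusion $H^0(\wedge^{p-1}M_L\otimes\mathcal F\otimes L^{q+1})\hookrightarrow\wedge^{p-1}H^0(L)\otimes H^0(\mathcal F\otimes L^{q+1})$ is injective, so the kernel of the factored map equals the kernel of the Koszul differential) is well placed.
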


Theorem \ref{prop: ker} has some nice direct consequences. The first
one is a duality Theorem which was proved in \cite{Gr84a}.

\begin{thm}
\label{thm: duality}
Let $L$ be a globally generated line bundle on a smooth projective variety $X$ of
dimension $n$. Set $r:= \dim|L|$. If
$$
H^i(X,L^{q-i}) = H^i(X,L^{q-i+1}) = 0,\ \ i=1,\ldots,n-1
$$
then
$$
K_{p,q}(X,L)^{\vee}\cong K_{r-n-p,n+1-q}(X,K_X,L).
$$
\end{thm}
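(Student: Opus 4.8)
The plan is to use the kernel-bundle description from Theorem~\ref{prop: ker} together with Serre duality on $X$. First I would express both sides in terms of $M_L$: by Theorem~\ref{prop: ker},
\[
K_{p,q}(X,L)\cong \ker\Bigl(H^1(X,\wedge^{p+1}M_L\otimes L^{q-1})\to \wedge^{p+1}H^0(L)\otimes H^1(X,L^{q-1})\Bigr),
\]
and similarly
\[
K_{r-n-p,\,n+1-q}(X,K_X,L)\cong \coker\Bigl(\wedge^{r-n-p+1}H^0(L)\otimes H^0(K_X\otimes L^{n-q})\to H^0(X,\wedge^{r-n-p}M_L\otimes K_X\otimes L^{n+1-q})\Bigr).
\]
So the statement reduces to a duality between a kernel of an $H^1$-map and a cokernel of an $H^0$-map, in complementary exterior powers of $M_L$.

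The key algebraic input is the perfect pairing $\wedge^{p}M_L\otimes \wedge^{r-p}M_L\to \wedge^r M_L=\det M_L$, and the identification $\det M_L\cong L^{-1}$ (which follows from the defining sequence $0\to M_L\to H^0(L)\otimes\OO_X\to L\to 0$, taking determinants). Hence $(\wedge^{p+1}M_L)^\vee\cong \wedge^{r-p-1}M_L\otimes L$. Serre duality on the $n$-dimensional smooth $X$ then gives
\[
H^1(X,\wedge^{p+1}M_L\otimes L^{q-1})^\vee\cong H^{n-1}(X,\wedge^{r-p-1}M_L\otimes L\otimes K_X\otimes L^{1-q})=H^{n-1}(X,\wedge^{r-p-1}M_L\otimes K_X\otimes L^{2-q}).
\]
The hypothesis $H^i(X,L^{q-i})=H^i(X,L^{q-i+1})=0$ for $i=1,\dots,n-1$ is exactly what is needed to climb the long exact sequences obtained from the exterior powers of $0\to M_L\to H^0(L)\otimes\OO_X\to L\to 0$: wedging with $\wedge^j$ of this sequence and twisting, one shows inductively that $H^i(X,\wedge^{r-p-1}M_L\otimes K_X\otimes L^{2-q})$ agrees with $H^{?}$ of $\wedge^{r-n-p}M_L\otimes K_X\otimes L^{n+1-q}$ — that is, the cohomology can be shifted from degree $n-1$ down to degree $0$ at the cost of replacing the exterior power index $r-p-1$ by $r-n-p$, precisely the shift by $n-1$ that matches the indices in the claimed isomorphism. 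Dualizing the kernel description of $K_{p,q}(X,L)$ then turns it into a cokernel, and tracking the maps shows it is the cokernel computing $K_{r-n-p,n+1-q}(X,K_X,L)$.

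I expect the main obstacle to be the bookkeeping in the inductive vanishing/shifting step: one must verify that at each stage of breaking up $\wedge^{j}M_L\otimes(\text{twist})$ via the Koszul/Lazarsfeld sequences, the intermediate cohomology groups that could obstruct the isomorphism are killed by exactly the hypotheses $H^i(X,L^{q-i})=H^i(X,L^{q-i+1})=0$, with the two families of vanishings ($q-i$ and $q-i+1$) accounting for the two sheaves appearing when one splits a short exact sequence. The exterior-power analogue of the fundamental sequence (the complex $0\to\wedge^{p+1}M_L\to\wedge^{p+1}H^0(L)\otimes\OO_X\to\wedge^p M_L\otimes L\to\cdots$, resolving $\wedge^{p+1}M_L$) must be used with some care to control hypercohomology; once that is set up, the rest is the formal combination of Serre duality and the $\det M_L\cong L^{-1}$ pairing described above. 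A clean way to package this is to prove the shifting lemma once, in the form: under the stated hypotheses, $H^i(X,\wedge^a M_L\otimes K_X\otimes L^b)\cong H^{i+1}(X,\wedge^{a+1}M_L\otimes K_X\otimes L^{b-1})$ in the relevant range, and then iterate $n-1$ times.
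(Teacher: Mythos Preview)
Your proposal is correct and aligns with what the paper indicates: the paper does not give a detailed proof of this theorem but states that it is a direct consequence of the kernel-bundle description (Theorem~\ref{prop: ker}) and cites Green's original paper \cite{Gr84a}. Your argument---kernel bundles combined with Serre duality, the identification $\det M_L\cong L^{-1}$, and the inductive cohomology shift enabled by the vanishing hypotheses---is exactly the standard route and is the one the paper is pointing to.
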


Another consequence of Theorem \ref{prop: ker}, stated
implicitly in \cite{Fa06b} without proof,
is the following:

\begin{thm}
 \label{thm: nonvan}
Let $L$ be a non-special globally generated line bundle on
a smooth curve $C$ of genus $g\ge 2$. Set $d  =\deg(L)$, $r=h^0(C,L)-1$, and
consider $1\le p\le r$. Then
$$
\dim\ K_{p,1}(C,L)-\dim\ K_{p-1,2}(C,L)=p\cdot{d-g\choose p}
\left(\frac{d+1-g}{p+1}-\frac{d}{d-g}\right).
$$
In particular, if
$$
p<\frac{(d+1-g)(d-g)}{d}-1,
$$
then $K_{p,1}(C,L)\ne 0$, and if
$$
\frac{(d+1-g)(d-g)}{d}\le p\le d-g,
$$
then $K_{p-1,2}(C,L)\ne 0$.
\end{thm}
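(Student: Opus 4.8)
The plan is to compute the Euler characteristic of the Koszul complex at the spot $\wedge^p H^0(L)\otimes H^0(L^q)$ for $q=1$, exploiting the fact that $L$ is non-special so that all the higher cohomology that could obstruct the computation vanishes. Concretely, since $L$ is non-special and globally generated on a curve, $H^1(C,L)=0$ and $H^1(C,\wedge^{p}M_L)=0$ in the relevant range (this follows from the kernel bundle description in Theorem \ref{prop: ker}, together with $H^1(C,L^{q})=0$ for all $q\ge 1$), so that the only Koszul cohomology that survives along the strand $q=1$ is $K_{p,1}(C,L)$ and $K_{p-1,2}(C,L)$. First I would write down the three-term Koszul complex
$$
\wedge^{p+1}H^0(L)\otimes H^0(\OO_C)\to \wedge^p H^0(L)\otimes H^0(L)\to \wedge^{p-1}H^0(L)\otimes H^0(L^2)
$$
and observe that its cohomology at the left is $K_{p+1,0}(C,L)$, which vanishes for $p\ge 1$ since $C$ is non-degenerate (no linear syzygies of the irrelevant type), and at the right it is $K_{p-1,2}(C,L)$. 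The cohomology in the middle is $K_{p,1}(C,L)$.

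Next I would compute the alternating sum of dimensions. Using $h^0(L)=r+1=d-g+1$, $h^0(\OO_C)=1$, $h^0(L)=d+1-g$, and $h^0(L^2)=2d+1-g$ (again by non-speciality of $L$, hence of $L^2$), the Euler characteristic of the three-term complex is
$$
\binom{r+1}{p+1}-\binom{r+1}{p}(d+1-g)+\binom{r+1}{p-1}(2d+1-g),
$$
and this equals $\dim K_{p+1,0}-\dim K_{p,1}+\dim K_{p-1,2}=-\dim K_{p,1}+\dim K_{p-1,2}$ once the left-hand group is seen to vanish. It then remains to simplify this binomial expression into the closed form $p\cdot\binom{d-g}{p}\bigl(\frac{d+1-g}{p+1}-\frac{d}{d-g}\bigr)$; this is a routine manipulation using $\binom{r+1}{p}=\frac{r+1-p}{p}\binom{r+1}{p-1}$ and collecting terms, and I would relegate it to a one-line computation. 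The sign convention gives exactly $\dim K_{p,1}-\dim K_{p-1,2}$ equal to that quantity.

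For the two "in particular" statements: if the displayed quantity is positive, i.e. $\frac{d+1-g}{p+1}>\frac{d}{d-g}$, which rearranges to $p<\frac{(d+1-g)(d-g)}{d}-1$, then $\dim K_{p,1}(C,L)>\dim K_{p-1,2}(C,L)\ge 0$, forcing $K_{p,1}(C,L)\ne 0$; symmetrically, when the quantity is $\le 0$, i.e. $p\ge \frac{(d+1-g)(d-g)}{d}$ (and $p\le d-g$ so that the binomial coefficient is nonzero and the group $K_{p-1,2}$ lives in the admissible range), we get $\dim K_{p-1,2}(C,L)\ge \dim K_{p,1}(C,L)\ge 0$ but with the stronger conclusion that it is strictly positive when the inequality on $p$ is strict, and at the boundary value one needs the coefficient $p\binom{d-g}{p}$ to be nonzero to conclude $\dim K_{p-1,2}\ge \dim K_{p,1}$; a small separate argument (or the observation that $K_{p,1}$ is forced to vanish there) closes the equality case. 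The main obstacle I anticipate is not the algebra but the bookkeeping of which cohomology groups genuinely vanish: I must be careful that $H^1(C,\wedge^{p}M_L\otimes L)=0$ and the analogous term so that the three-term complex really computes only the two groups in the statement, and that $K_{p+1,0}(C,L)=0$ requires only non-degeneracy (linear normality is automatic here since $L$ is non-special and we may assume $d\ge 2g+1$ or invoke the projective normality built into the setup); handling the edge cases $p=1$ and $p=d-g$ cleanly is the other place where care is needed.
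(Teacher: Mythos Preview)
There is a genuine gap. Your three-term truncation
\[
\wedge^{p+1}H^0(L)\otimes H^0(\OO_C)\to \wedge^p H^0(L)\otimes H^0(L)\to \wedge^{p-1}H^0(L)\otimes H^0(L^{2})
\]
does not have $K_{p-1,2}(C,L)$ as its cohomology at the right-hand term: what sits there is the full cokernel $\wedge^{p-1}H^0(L)\otimes H^0(L^2)/\mathrm{im}$, whereas $K_{p-1,2}$ is the middle cohomology of the \emph{next} three-term piece, namely $\ker\bigl(\wedge^{p-1}H^0(L)\otimes H^0(L^2)\to\wedge^{p-2}H^0(L)\otimes H^0(L^3)\bigr)/\mathrm{im}$. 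Consequently your three-term alternating sum
\[
\binom{r+1}{p+1}-\binom{r+1}{p}(d+1-g)+\binom{r+1}{p-1}(2d+1-g)
\]
is simply not equal to $\dim K_{p-1,2}-\dim K_{p,1}$. For instance with $g=2$, $d=5$, $r=3$, $p=2$ your sum gives $4-24+36=16$, while the formula in the statement gives $\dim K_{p,1}-\dim K_{p-1,2}=-2$. The ``routine manipulation'' you promise would fail.

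Your approach can be repaired, but not with three terms. You must run the strand all the way down to $\wedge^0H^0(L)\otimes H^0(L^{p+1})$ and prove that $K_{p-i,i+1}(C,L)=0$ for every $i\ge 2$; this follows from non-speciality via $H^1(\wedge^{j+1}M_L\otimes L^{q-1})^\vee\cong H^0(\wedge^{r-j-1}M_L\otimes K_C\otimes L^{2-q})\hookrightarrow \wedge^{r-j-1}H^0(L)\otimes H^0(K_C\otimes L^{2-q})=0$ for $q\ge 3$ (not from the vanishing $H^1(\wedge^p M_L)=0$ you mention, which is the wrong twist). You would then be left with a $(p+2)$-term alternating binomial sum to collapse. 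The paper sidesteps this entirely: it applies duality to pass to $K_{r-p-1,1}(C,K_C,L)$ and $K_{r-p,0}(C,K_C,L)$, which are governed by the \emph{two-term} complex $0\to\wedge^{r-p}H^0(L)\otimes H^0(K_C)\to H^0(\wedge^{r-p-1}M_L\otimes K_C\otimes L)\to 0$, and then computes the rank of the target by Riemann--Roch on $\wedge^{p+1}M_L$. That route produces the closed form immediately, with no long sum and no auxiliary vanishing to establish.
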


\proof
Since we work with a spanned line bundle on a curve,
Theorem \ref{thm: duality} applies, hence we have
$$
K_{p,1}(C,L)\cong K_{r-p-1,1}(C,K_C,L)^\vee
$$
and
$$
K_{p-1,2}(C,L)\cong K_{r-p,0}(C,K_C,L)^\vee.
$$

Set, as usual, $M_L=\mbox{Ker} \{H^0(C, L)\otimes \mathcal O_C\to L \}$, and
consider the Koszul complex
\begin{equation}
\label{eqn: KC}
0\to\wedge^{r-p}H^0(C, L)\otimes H^0(C, K_C)\to H^0(C, \wedge ^{r-p-1}M_L\otimes K_C\otimes L)\to 0.
\end{equation}

Since $L$ is non-special, $K_{r-p,0}(C,K_C,L)$ is isomorphic
to the kernel of the differential appearing in (\ref{eqn: KC}),
hence the difference which we wish to compute coincides with
the Euler characteristic of the complex (\ref{eqn: KC}).

\medskip

Next we determine $h^0(C, \wedge ^{r-p-1}M_L\otimes K_C\otimes L)$.
Note that $\rk(M_L)=r$ and $\wedge ^{r-p-1}M_L\otimes L\cong \wedge^{p+1}M_L^\vee$.
In particular, since $H^0(C, \wedge^{p+1}M_L)\cong K_{p+1,0}(C,L)=0$, we
obtain
$$
h^0(C, \wedge ^{r-p-1}M_L\otimes K_C\otimes L)=-\chi(\wedge^{p+1}M_L).
$$
Observe that
$$
\deg(\wedge^{p+1}M_L)=\deg(M_L){r-1\choose p}=-d  {r-1\choose p}
$$
and
$$
\rk(\wedge^{p+1}M_L)={r\choose p+1}
$$

From the Riemann-Roch Theorem it follows
$$
-\chi(C, \wedge^{p+1}M_L)=d  {r-1\choose p}+(g-1){r\choose p+1}
$$
and hence
$$
\dim \  K_{p,1}(C,L)-\dim \ K_{p-1,2}(C,L)=d  {r-1\choose p}+(g-1){r\choose p+1}
-g{r+1\choose p+1}.
$$

The formula is obtained by replacing $r=d-g$.
\endproof

\begin{rmk}
\label{rmk: nonvan}
 {\rm
A full version of Theorem \ref{thm: nonvan} for special line
bundles can be obtained in a similar manner
by adding alternating sums of other groups $K_{p-i.i+1}$.
For example, if $L^{\otimes 2}$ is non-special, then
from the complex
$$
0\to\wedge^{r-p+1}H^0(C, L)\otimes H^0(C, K_C\otimes L^{-1})
\to\wedge^{r-p}H^0(C, L)\otimes H^0(C, K_C)\to
$$
$$
\to H^0(C, \wedge ^{r-p-1}M_L
\otimes K_C\otimes L)\to 0
$$
we obtain the following expression for
$$
\dim\ K_{p,1}(C,L)-\dim\ K_{p-1,2}(C,L)+
\dim\ K_{p-2,3}(C,L)=
$$
$$
=d  {r-1\choose p}+(g-1){r\choose p+1}
-g{r+1\choose p+1}+{r+1\choose p}(r-d+g).
$$}
\end{rmk}

\subsection{Hilbert schemes}

Suppose $X$ is a smooth variety, and consider $L\in\Pic(X)$.
A novel description of the
Koszul cohomology of $X$ with values in $L$ was provided
in \cite{Voisin: even} via the Hilbert scheme of points on $X$.

Denote by $X^{[n]}$ the Hilbert scheme parameterizing
zero-dimensional length $n$ subschemes of $X$, let
$X^{[n]}_{\rm curv}$ be the open subscheme
parameterizing curvilinear length $n$ subschemes, and let
$$
\Xi_n\subset X^{[n]}_{\rm curv}\times X
$$
be the incidence subscheme.
For a line bundle $L$ on $X$, the sheaf
$L^{[n]} := q_*p^*L$
is locally free of rank $n$ on $X^{[n]}_{\rm curv}$,
and the fiber over $\xi\in X^{[n]}_{\rm curv}$ is
isomorphic to $H^0(\xi,L\otimes\mathcal O_{\xi})$.

\medskip
There is a natural map
$$
H^0(X,L)\otimes\mathcal O_{X^{[n]}_{\rm curv}}\to L^{[n]},
$$
acting on the fiber over $\xi\in X^{[n]}_{\rm curv}$,
by $s\mapsto s|_{\xi}$.
In \cite{Voisin: even} and
\cite{EGL} it is shown that, by taking wedge powers and global sections,
this map induces an isomorphism:
$$
\wedge^n H^0(X,L)\cong H^0(X^{[n]}_{\rm curv},\det\ L^{[n]}).
$$

Voisin proves that there is an injective map
$$
H^0(\Xi_{p+1},\det\ L^{[p+1]}\boxtimes L^{q-1})
\rightarrow\wedge^p H^0(X,L)\otimes H^0(X,L^q)
$$
whose image is isomorphic to the kernel of the Koszul differential.
This eventually leads to the following result:

\begin{thm}[Voisin \cite{Voisin: even}]
\label{thm: Voisin description}
For all integers $p$ and $q$, the Koszul cohomology
$K_{p,q}(X,L)$ is isomorphic to the cokernel of the
restriction map
$$
H^0(X^{[p+1]}_{\rm curv}\times X,\det\ L^{[p+1]}\boxtimes L^{q-1})
\to
H^0(\Xi_{p+1},\det\ L^{[p+1]}\boxtimes L^{q-1}|_{\Xi_{p+1}}).
$$
In particular,
$$
K_{p,1}(X,L)\cong\coker(H^0(X^{[p+1]}_{\rm curv},\det\ L^{[p+1]})
\stackrel{q^*}{\to}H^0(\Xi_{p+1},q^*\det\ L^{[p+1]}|_{\Xi_{p+1}})).
$$
\end{thm}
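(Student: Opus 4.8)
The plan is to show that, granting the two structural isomorphisms recalled just above, the displayed restriction map is — up to canonical identifications — nothing but one more arrow of the Koszul complex computing $K_{p,q}(X,L)$, after which the theorem follows by reading off the cohomology at the middle term. First I would identify the source. Because $\Gamma$ sends an external tensor product to an ordinary tensor product, K\"unneth gives
$$
H^0\bigl(X^{[p+1]}_{\rm curv}\times X,\ \det L^{[p+1]}\boxtimes L^{q-1}\bigr)\ \cong\ H^0\bigl(X^{[p+1]}_{\rm curv},\det L^{[p+1]}\bigr)\otimes H^0(X,L^{q-1}),
$$
and the isomorphism $\wedge^{p+1}H^0(X,L)\cong H^0(X^{[p+1]}_{\rm curv},\det L^{[p+1]})$ of \cite{Voisin: even}, \cite{EGL} then identifies this with $\wedge^{p+1}H^0(X,L)\otimes H^0(X,L^{q-1})$, i.e. exactly the term of the Koszul complex sitting immediately to the left of $\wedge^pH^0(L)\otimes H^0(L^q)$. (For $q=1$ one has $L^{q-1}=\OO_X$, the source collapses to $\wedge^{p+1}H^0(X,L)$, and $\det L^{[p+1]}\boxtimes\OO_X=q^*\det L^{[p+1]}$, which is the particular case stated.)

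On the target side, the injectivity result of Voisin quoted above furnishes an embedding $\iota\colon H^0(\Xi_{p+1},\det L^{[p+1]}\boxtimes L^{q-1}|_{\Xi_{p+1}})\hookrightarrow\wedge^pH^0(X,L)\otimes H^0(X,L^q)$ whose image is $\ker\delta_{p,q}$, the kernel of the Koszul differential out of $\wedge^pH^0(L)\otimes H^0(L^q)$. The heart of the argument, and where I expect the real work to lie, is to check that the composite of the restriction map with $\iota$ is the preceding Koszul differential:
$$
\iota\circ\bigl(\text{restriction map}\bigr)\ =\ \delta_{p+1,q-1}\colon\wedge^{p+1}H^0(L)\otimes H^0(L^{q-1})\longrightarrow\wedge^pH^0(L)\otimes H^0(L^q).
$$

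To verify this I would argue as follows. Both sides are linear, so it suffices to test on a decomposable vector $(s_0\wedge\cdots\wedge s_p)\otimes t$; and since $X^{[p+1]}_{\rm curv}\times X$ is integral and $\Xi_{p+1}$ is its universal family, it is enough to evaluate fibrewise over the dense open locus of pairs $(\xi,x)$ with $x$ a reduced point of $\xi$. There $q\colon\Xi_{p+1}\to X^{[p+1]}_{\rm curv}$ is finite flat of degree $p+1$, with fibre over a point $\xi$ the subscheme $\xi\subset X$ itself; the residual exact sequence $0\to L\otimes\OO_{\xi'}\to L\otimes\OO_\xi\to L\otimes\OO_x\to 0$, with $\xi'$ the subscheme of $\xi$ residual to $x$, yields the canonical identification $\det L^{[p+1]}_\xi\cong\det L^{[p]}_{\xi'}\otimes L_x$; and the decomposition $\wedge^{p+1}(V\oplus W)=\wedge^pV\otimes W$, valid when $\dim W=1$, expands $s_0|_\xi\wedge\cdots\wedge s_p|_\xi$ into $\sum_{i=0}^p(-1)^i\, s_i(x)\otimes(s_0|_{\xi'}\wedge\cdots\widehat{s_i|_{\xi'}}\cdots\wedge s_p|_{\xi'})$, which is precisely where the alternating signs of $\delta_{p+1,q-1}$ come from. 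Unwinding Voisin's construction of $\iota$ against these identifications, one finds that $(s_0\wedge\cdots\wedge s_p)\otimes t$ is carried to $\sum_{i=0}^p(-1)^i(s_0\wedge\cdots\widehat{s_i}\cdots\wedge s_p)\otimes(s_i\cdot t)=\delta_{p+1,q-1}\bigl((s_0\wedge\cdots\wedge s_p)\otimes t\bigr)$. The genuinely delicate point is matching Voisin's global description of $\iota$ in \cite{Voisin: even} with this local picture, and checking that density of the curvilinear--reduced locus makes the identity propagate; this is the main obstacle, everything else being formal.

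Once this is in place the conclusion is immediate: the restriction map equals $\delta_{p+1,q-1}$ followed by the inclusion $\ker\delta_{p,q}\hookrightarrow\wedge^pH^0(L)\otimes H^0(L^q)$, hence, since $\iota$ is injective,
$$
\coker\bigl(\text{restriction map}\bigr)\ \cong\ \ker\delta_{p,q}\big/\Im\,\delta_{p+1,q-1}\ =\ K_{p,q}(X,L),
$$
which is the assertion, the case $q=1$ being the instance $L^{q-1}=\OO_X$. As an alternative organisation, one could instead run the whole argument through the long exact cohomology sequence attached to $0\to\I_{\Xi_{p+1}}\otimes(\det L^{[p+1]}\boxtimes L^{q-1})\to\det L^{[p+1]}\boxtimes L^{q-1}\to(\det L^{[p+1]}\boxtimes L^{q-1})|_{\Xi_{p+1}}\to 0$ on $X^{[p+1]}_{\rm curv}\times X$, whose relevant boundary term computes the cokernel of the restriction map and, after the identifications above, is recognised as $K_{p,q}(X,L)$.
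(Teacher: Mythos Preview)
Your proposal is correct and follows exactly the route the paper sketches immediately before stating the theorem: the paper does not supply its own proof but merely records Voisin's injective map $H^0(\Xi_{p+1},\det L^{[p+1]}\boxtimes L^{q-1})\hookrightarrow\wedge^pH^0(L)\otimes H^0(L^q)$ with image $\ker\delta_{p,q}$ and says ``this eventually leads to'' the result. Your K\"unneth identification of the source, your recognition that the restriction map becomes $\delta_{p+1,q-1}$ under these identifications, and your fibrewise verification via the residual sequence are precisely what is needed to fill in that sketch; you have also correctly flagged the only non-formal point, namely matching Voisin's global construction of $\iota$ with the local expansion over the reduced locus.
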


\medskip
\begin{rmk}
{\rm
The group $K_{p,q}(X,\mathcal F,L)$ is obtained by replacing $L^{q-1}$ by
$\mathcal F\otimes L^{q-1}$ in the statement of Theorem \ref{thm: Voisin description}.
}
\end{rmk}

The main application of this approach is the proof of the
generic Green Conjecture \cite{Voisin: even}, see Subsection 4.1 for a more
detailed discussion on the subject. The precise statement
is the following.

\begin{thm}[Voisin \cite{Voisin: even} and \cite{Voisin: odd}]
\label{thm: Voisin}
Consider a smooth projective $K3$ surface $S$, such
that $\Pic(S)$ is isomorphic to $\mathbb{Z}^2$, and is freely
generated by $L$ and $\mathcal{O}_S(\Delta)$, where $\Delta$ is a
smooth rational curve such that $\mathrm{deg}(L_{\mid \Delta})=2$, and $L$
is a very ample line bundle with $L^2=2g-2, \,g=2k+1$. Then
$K_{k+1,1}\bigl(S,L\otimes\mathcal{O}_S(\Delta)\bigr)=0$ and
\begin{equation}
 \label{eqn: Voisin odd}
 K_{k,1}(S,L)=0.
\end{equation}
\end{thm}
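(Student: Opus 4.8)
The plan is to reduce both vanishings to Voisin's Hilbert‑scheme description of Koszul cohomology, Theorem~\ref{thm: Voisin description}, and then to an induction on $k$ in which the $(-2)$‑curve $\Delta$ supplies the geometric input. Write $g=2k+1$ and $H=L\otimes\OO_S(\Delta)$. Since $K_S=\OO_S$, adjunction shows that the general member $C\in|L|$ is a smooth curve of genus $g$ with $K_C=L|_C$; likewise, using $(L+\Delta)^2=L^2+2L\cdot\Delta+\Delta^2=2g$ and $H\cdot\Delta=0$, the general member $C'\in|H|$ is a smooth curve of genus $g+1=2k+2$ with $K_{C'}=H|_{C'}$. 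By Theorem~\ref{thm: Voisin description}, together with the isomorphism $\wedge^{n}H^0(S,L)\cong H^0(S^{[n]}_{\mathrm{curv}},\det\,L^{[n]})$, the vanishing $K_{k,1}(S,L)=0$ is equivalent to surjectivity of the restriction map
$$
H^0\bigl(S^{[k+1]}_{\mathrm{curv}},\det\,L^{[k+1]}\bigr)\longrightarrow H^0\bigl(\Xi_{k+1},\,q^{*}\det\,L^{[k+1]}|_{\Xi_{k+1}}\bigr),
$$
and $K_{k+1,1}(S,H)=0$ is equivalent to surjectivity of the analogous map built from $\det\,H^{[k+2]}$ on $\Xi_{k+2}\subset S^{[k+2]}_{\mathrm{curv}}\times S$. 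Thus the whole theorem reduces to two surjectivity statements for restrictions of line bundles to incidence varieties inside Hilbert schemes of points on $S$.

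To prove these surjectivities I would analyse $\Xi_n$ (with $n=k+1$, resp.\ $n=k+2$) through the projection $p\colon\Xi_n\to S$ to the surface. Sending $(\xi,x)$ to the residual subscheme $\xi'\subset\xi$ of length $n-1$ identifies $\Xi_n$ with the curvilinear locus of the nested Hilbert scheme $S^{[n-1,n]}=\{\xi'\subset\xi\}$, the quotient $\xi/\xi'$ recovering $x$, and the fibres of $p$ are then birationally the Hilbert schemes $S^{[n-1]}_{\mathrm{curv}}$. Running the Leray spectral sequence for $p$ and feeding in the standard residuation sequence, which relates $\det\,L^{[n]}$ on $S^{[n-1,n]}$ to the pullback of $\det\,L^{[n-1]}\boxtimes L$ on $S^{[n-1]}\times S$ twisted down by the exceptional divisor, one converts the desired surjectivity into a surjectivity/vanishing statement for tautological bundles on $S^{[n-1]}$ --- roughly, one genus step lower --- plus correction terms supported where the length‑$n$ subscheme is non‑reduced. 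This is where the chosen Picard lattice does the work: by Lazarsfeld's theorem on curves on $K3$ surfaces the general members of $|L|$ and $|H|$ are Brill--Noether general, so $\Cliff(C)=\Cliff(C')=k$ and the two vanishings sit precisely at the threshold predicted by Green's Conjecture; and since $L\cdot\Delta=2$, the general curve $C'\in|H|$ degenerates to the nodal curve $C\cup_{\{p_1,p_2\}}\Delta$ with $C\in|L|$, so that $\Delta$ links the genus‑$(2k+2)$ statement to the genus‑$(2k+1)$ statement and one can run a single induction on $k$, the base cases being the low‑genus range where Green's Conjecture is the classical Enriques--Babbage--Petri theorem.

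The hard part will be the control of the correction terms: one has to bound the higher direct images $R^{i}p_{*}\bigl(q^{*}\det\,L^{[n]}|_{\Xi_n}\bigr)$ and, more generally, the fibrewise cohomology of the tautological sheaves attached to $L$ and to the kernel bundle $M_L$, and show that everything the recursion produces vanishes in the relevant range of $k$. Experience with arguments of this shape suggests that the induction will close only if one proves a somewhat stronger statement, with $L$ replaced by twists $L(-D)$ for effective divisors $D$ supported on $\Delta$ and on a bounded number of general points of $S$, chosen so as to be stable under the inductive step and compatible with the degeneration $C'\rightsquigarrow C\cup\Delta$; isolating the right auxiliary statement and checking that it survives both the spectral sequence and the degeneration is the technical heart of \cite{Voisin: even} and \cite{Voisin: odd}. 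Once the two vanishings on $S$ are established, the hyperplane‑section theorem together with $H^{1}(S,\OO_S)=H^{1}(S,L)=H^{1}(S,H)=0$ transfers them to $K_{k,1}(C,K_C)=0$ and $K_{k+1,1}(C',K_{C'})=0$, which by the duality Theorem~\ref{thm: duality} are exactly Green's Conjecture for the general curve of genus $2k+1$ and of genus $2k+2$ --- the application discussed in Subsection~4.1.
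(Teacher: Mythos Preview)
The paper does not prove this theorem: it is stated with attribution to \cite{Voisin: even} and \cite{Voisin: odd} and then used as input for the applications in Sections~3 and~4. So there is no ``paper's own proof'' against which to compare your proposal.

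What you have written is not a proof but an outline that explicitly defers the ``technical heart'' back to Voisin's papers. As a summary of her arguments, however, the outline is structurally inaccurate on the main point. There is \emph{no induction on $k$}, and the Enriques--Babbage--Petri theorem plays no role as a base case. The even-genus vanishing is established in \cite{Voisin: even} on a $K3$ surface with $\Pic\cong\mathbb Z$ (no $(-2)$-curve present) by a direct and highly intricate analysis of the restriction map of Theorem~\ref{thm: Voisin description}; nothing is fed in from lower $k$. The paper \cite{Voisin: odd} then works on the rank-two lattice of the statement, and the role of $\Delta$ is not to descend from $k$ to $k-1$ but to reduce the odd-genus vanishing $K_{k,1}(S,L)=0$ to a statement controlled by the already proved even-genus case applied to $H=L\otimes\OO_S(\Delta)$. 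The logical flow is ``even $\Rightarrow$ odd'', not a joint induction bottoming out in Petri's theorem.

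A secondary inaccuracy: you invoke Lazarsfeld's theorem to conclude that the general member of $|H|$ is Brill--Noether general. That theorem requires every curve in the linear system to be integral, but $|H|$ contains the reducible member $C+\Delta$ for any $C\in|L|$, so the hypothesis fails. Voisin's argument does not route through Brill--Noether generality of the hyperplane sections; the vanishing on $S$ is proved directly, and only afterwards is it transported to curves via the hyperplane section theorem.
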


Voisin's result, apart from settling the Generic Green Conjecture, offers the possibility (via the cohomological calculations carried out in \cite{Fa06b}, see also Section 3), to give a  much shorter proof of the Harris-Mumford Theorem \cite{HM} on the Kodaira dimension of $\mm_g$ in the case of odd genus. This proof does not use intersection theory on the stack of admissible coverings at all and is considerably shorter that the original proof. This approach has been described in full detail in \cite{Fa08}.

\section{Geometric cycles on the moduli space}
\label{sec: cycles}

\subsection{Brill-Noether cycles}
We recall a few basic facts from Brill-Noether theory, see \cite{ACGH}
for a general reference.

For a smooth curve $C$ and integers $r, d\geq 0$ one considers the Brill-Noether locus
$$
W^r_d(C):=\{L\in\Pic^d(C): h^0(C, L)\ge r+1\}
$$
as well as the variety of linear series of type $\mathfrak g^r_d$ on $C$, that is,
$$G^r_d(C):=\{(L, V): L\in W^r_d(C), V\in \textbf{G}(r+1, H^0(L))\}.$$
The locus $W^r_d(C)$  is a deteminantal subvariety of $\Pic^d(C)$ of expected dimension
equal to the {\em{Brill-Noether numer}} $\rho(g,r,d)=g-(r+1)(g-d+r)$. According to the  Brill-Noether Theorem
 for a general curve
$[C]\in \cM_g$, both $W^r_d(C)$ and $G^r_d(C)$ are irreducible varieties of dimension
 $$\dim\ W^r_d(C)=\dim\ G^r_d(C)=\rho(g,r,d).$$
In particular, $W^r_d(C)=\emptyset$ when $\rho(g, r, d)<0$. By imposing the condition that a curve carry
a linear series $\mathfrak g^r_d$ when $\rho(g, r, d)<0$, one can define a whole range of geometric subvarieties of $\cM_g$.

We introduce the Deligne-Mumford stack  $\sigma: \mathfrak{G}^r_d\rightarrow \textbf{M}_g$ classifying pairs $[C, l]$ where $[C]\in \cM_g$ and $l=(L, V)\in G^r_d(C)$ is a linear series $\mathfrak g^r_d$, together
with the projection $\sigma[C, l]:=[C]$. The stack $\mathfrak G^r_d$ has a determinantal structure inside a Grassmann bundle over the universal Picard stack $\mathfrak{Pic}_g^d\rightarrow \textbf{M}_g$ . In particular, each irreducible component of $\mathfrak G^r_d$ has dimension at least $3g-3+\rho(g, r, d)$, cf. \cite{AC81b}. We define the \emph{Brill-Noether cycle}
$$
\cM_{g, d}^r:=\sigma_*(\mathfrak{G}^r_d)=\{[C]\in\cM_g: W^r_d(C)\neq \emptyset \},
$$
together with the substack structure induced from the determinantal structure of $\mathfrak G^r_d$ via the morphism $\sigma$. A result of Steffen \cite{St98} guarantees that each irreducible component of $\cM_{g, d}^r$ has dimension at least $3g-3+\rho(g, r, d)$.

When $\rho(g, r, d)=-1$, Steffen's result coupled with the Brill-Noether Theorem implies that the cycle $\cM_{g, d}^r$ is pure of codimension $1$ inside $\cM_g$. One has the following more precise statement due to Eisenbud and Harris
\cite{EH89}:
\begin{thm}
For integers $g, r$ and $d$ such that $\rho(g, r, d)=-1$, the locus $\cM_{g, d}^r$ is an irreducible divisor on $\cM_g$. The class of its compactification $\mm_{g, d}^r$ inside $\mm_g$ is given by the following formula:
 $$
\mm_{g, d}^r\equiv c_{g, d, r}\left((g+3)\lambda-\frac{g+1}{6}\delta_0-
\sum_{i=1}^{[g/2]} i(g-i)\delta_i\right)\in \mathrm{Pic}(\mm_g).
$$
\end{thm}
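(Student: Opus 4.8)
The plan is to establish this in two independent stages: first, the set-theoretic and geometric structure of $\cM^r_{g,d}$ (irreducibility and divisoriality), and second, the computation of the class of $\mm^r_{g,d}$ in $\Pic(\mm_g)$. For the first stage, I would invoke the preceding discussion: when $\rho(g,r,d)=-1$, Steffen's result forces every component of $\cM^r_{g,d}$ to have codimension at most $1$, while the Brill--Noether Theorem (a general curve carries no $\mathfrak g^r_d$) forces codimension at least $1$, hence $\cM^r_{g,d}$ is pure of codimension $1$. Irreducibility is the more delicate point; the standard route, following Eisenbud--Harris, is via limit linear series on a chain of elliptic curves (or a generic pointed curve degeneration), showing that the space $\mathfrak G^r_d$ has a unique component dominating a codimension-one locus in $\cM_g$, so its pushforward is irreducible. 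I would cite \cite{EH89} for this.

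For the class computation, the strategy is the familiar test-curve method. One writes the unknown class as
$$
\mm^r_{g,d}\equiv a\lambda - b_0\delta_0 - \sum_{i=1}^{[g/2]} b_i\delta_i \in \Pic(\mm_g),
$$
and determines the coefficients $a, b_0, \dots, b_{[g/2]}$ (up to the overall factor $c_{g,d,r}$) by intersecting both sides with enough curves in $\mm_g$ whose intersection numbers against $\lambda$ and the boundary classes are known. The key tool is the formula of Eisenbud--Harris (from the theory of limit linear series) for the number of $\mathfrak g^r_d$'s with prescribed ramification on a general curve, which, when $\rho = -1$, yields the enumerative count governing how the determinantal cycle $\mathfrak G^r_d$ meets a general pencil. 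The natural test curves are: (i) a Lefschetz pencil of curves on a fixed surface, or more classically a general one-parameter family with only $\delta_0$ degenerations, to pin down $a$ and $b_0$; and (ii) for each $i$, the family obtained by attaching a fixed general curve of genus $i$ at a varying point to a fixed general curve of genus $g-i$, which isolates $b_i$ and produces the coefficient $i(g-i)$ through the ramification-weight bookkeeping in limit linear series. The ratio $b_0/a = \frac{g+1}{6(g+3)}$ is exactly what makes this the Brill--Noether slope $6 + \frac{12}{g+1}$.

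I expect the main obstacle to be the genuine content of the class formula: computing the intersection of $\mm^r_{g,d}$ with the test curves that pass through the boundary $\delta_i$. This requires the theory of limit linear series on nodal curves, a careful analysis of which refined limit linear series smooth out (the "smoothing" or "dimension" theorem of Eisenbud--Harris), and the precise ramification-sequence combinatorics that produce the factor $i(g-i)$. Pinning down the leading coefficient $a$ and the constant $c_{g,d,r}$ also requires knowing the degree of the natural determinantal/Grassmann-bundle map whose degeneracy locus is $\mathfrak G^r_d$, i.e.\ a Porteous-type computation on the universal Picard stack, and matching it against the enumerative count. Since this is a survey, I would not reproduce this calculation in detail but rather cite \cite{EH89} (and \cite{AC81b}, \cite{St98} for the determinantal structure underpinning it) for the full argument, noting that the $\lambda$ and $\delta_0$ coefficients also follow from the general curve-counting of Eisenbud--Harris combined with Steffen's purity.
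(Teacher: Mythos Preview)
The paper does not actually prove this theorem: it is stated as a known result, attributed to Eisenbud--Harris \cite{EH89}, with the sentence preceding the statement giving the purity argument (Steffen's bound combined with the Brill--Noether Theorem) and the sentence following it remarking that $c_{g,d,r}$ has a Schubert-calculus interpretation. No further argument is offered.

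Your outline is therefore more detailed than anything in the paper, and it is an accurate sketch of the original argument. The divisoriality via Steffen plus Brill--Noether is exactly what the paper invokes; the irreducibility via limit linear series on degenerate curves is the content of \cite{EH89}; and the class computation by test curves intersecting the boundary components, with the $i(g-i)$ coefficients arising from limit-linear-series ramification combinatorics, is the method of \cite{HM} (for $r=1$) and \cite{EH87b} (for general $r,d$). Your instinct to cite rather than reproduce the full calculation is correct for a survey, and matches what the paper does. One minor bibliographic point: the class formula itself is computed in \cite{EH87b} (and earlier in \cite{HM} for pencils), while \cite{EH89} supplies the irreducibility; the paper somewhat elides this distinction by citing only \cite{EH89}.
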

The constant $c_{g, d, r}$ has a clear intersection-theoretic interpretation using Schubert calculus.  Note that remarkably, the slope of all the  Brill-Noether divisors on $\mm_g$ is independent of $d$ and $r$ and
$$s(\mm_{g, d}^r)=6+\frac{12}{g+1}$$
for all $r, d\geq 1$ satisfying $\rho(g, r, d)=-1$. For genera $g$ such that $g+1$ is composite, one has as many Brill-Noether divisors on $\mm_g$ as ways of non-trivially factoring $g+1$. It is
natural to raise the following:

\begin{problem}
Construct an explicit linear equivalence between various Brill-Noether divisors $\mm_{g, d}^r$ on $\mm_g$ for different integers $r, d\geq 1$ with $\rho(g, r, d)=-1$.
\end{problem}

The simplest case is $g=11$ when there exist two (distinct) Brill-Noether divisors $\mm_{11, 6}^1$ and $\mm_{11, 9}^2$
and $s(\mm_{11, 6}^1)=s(\mm_{11, 9}^2)=7$. These divisors can be understood in terms of Noether-Lefschetz divisors on the moduli space $\overline{\mathcal{F}}_{11}$ of polarized $K3$ surfaces of degree $2g-2=20$. We recall that there exists a rational $\mathbb P^{11}$-fibration $$\phi:\mm_{11}-->\overline{\mathcal{F}}_{11}, \mbox{    } \ \phi[C]:=[S, \OO_S(C)],$$ where $S$ is the unique $K3$ surface containing $C$, see \cite{M94}.
Noting that $\cM_{11, 6}^1=\cM_{11, 14}^5$ it follows that $\cM_{11, 6}^1=\phi_{| \cM_{11}}^*(NL_1)$, where $NL_1$ is the Noether-Lefschetz divisor on $\F_{11}$ of polarized $K3$ surfaces $S$ with Picard lattice $\mathrm{Pic}(S)=\mathbb Z\cdot [\OO_S(1)]+\mathbb Z\cdot [C]$, where $C^2=20$ and $C\cdot c_1(\OO_S(1))=14$. Similarly, by Riemann-Roch, we have an equality of divisors $\cM_{11, 9}^2=\cM_{11, 11}^3$, and then $\cM_{11, 9}^2=\phi_{| \cM_{11}}^*(NL_2)$, with $NL_2$ being the Noether-Lefschetz divisor whose general point corresponds to a quartic surface $S\subset \mathbb P^3$ with $\mathrm{Pic}(S)=\mathbb Z\cdot [\OO_S(1)]+\mathbb Z\cdot [C]$, where $C^2=20$ and $C\cdot c_1(\OO_S(1))=11$. It is not clear whether $NL_1$ and $NL_2$ should be linearly equivalent on $\mathcal{F}_{11}$.

The next interesting case is $g=23$, see \cite{Fa00}: The three (distinct) Brill-Noether
divisors $\mm_{23, 12}^1$, $\mm_{23, 17}^2$ and $\mm_{23, 20}^1$ are multicanonical in the sense that there exist
explicitly known integers $m, m_1, m_2, m_3\in \mathbb Z_{>0}$ and an effective boundary divisor $E\equiv \sum_{i=1}^{11} c_i \delta_i\in \mathrm{Pic}(\mm_{23})$ such that
$$m_1\cdot \mm_{23, 12}^1+E\equiv m_2\cdot \mm_{23, 17}^2+E=m_3\cdot \mm_{23, 20}^3+E\in |mK_{\mm_{23}}|.$$

\begin{question}
For a genus $g$ such that $g+1$ is composite, is there a good geometric description of the stable base locus
$$\textbf{B}\bigl(\mm_g, |\mm_{g, d}^r|\bigr):=\bigcap_{n\geq 0} \mbox{Bs}(\mm_g, |n\mm_{g, d}^r|)$$
of the Brill-Noether linear system? It is clear that $\textbf{B}(\mm_g, |\mm_{g, d}^r|)$ contains important
subvarieties of $\mm_g$ like the hyperelliptic and trigonal locus, cf. \cite{HaMo}.
\end{question}

Of the higher codimension Brill-Noether cycles, the best understood are the $d$-gonal loci
$$\cM_{g, 2}^1\subset \cM_{g, 3}^1\subset \ldots \subset \cM_{g, d}^1\subset \ldots \subset \cM_g.$$
Each stratum $\cM_{g, d}^1$ is an irreducible variety of dimension $2g+2d-5$. The gonality stratification of $\cM_g$,  apart from being essential in the statement of Green's Conjecture, has
often been used for cohomology calculations or for bounding the cohomological dimension and the affine covering number of $\cM_g$.

\subsection{Koszul cycles}
Koszul cohomology behaves like the usual cohomology in many regards.
Notably, it can be computed in families see \cite{BG85}, or the book
\cite{Aprodu-Nagel: book}:

\begin{thm}
 \label{thm: semicontinuity}
Let $f:X\to S$ a flat family of projective varieties, parameterized
by an integral scheme, $L\in\Pic(X/S)$ a line bundle and $p,q\in \mathbb Z$. Then
there exists a coherent sheaf $\mathcal K_{p,q}(X/S,L)$ on $X$
and a nonempty Zariski open subset $U\subset S$ such that for all $s\in U$ one has that
$\mathcal K_{p,q}(X/S,L)\otimes k(s)\cong K_{p,q}(X_s,L_s)$.
\end{thm}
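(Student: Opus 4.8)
The plan is to realize $\mathcal{K}_{p,q}(X/S,L)$ as the middle cohomology sheaf of a relative Koszul complex of locally free sheaves on $S$, and then to restrict to a dense open subset over which this construction commutes with base change. It is enough to treat the case $\mathcal{F}=\mathcal{O}_X$, the general case being obtained verbatim by replacing $L^{\otimes k}$ with $\mathcal{F}\otimes L^{\otimes k}$ throughout. Only the three powers $k\in\{q-1,q,q+1\}$ occur in the Koszul complex in bidegree $(p,q)$; since $f$ is flat, $L^{\otimes k}$ is flat over $S$, and since $S$ is reduced, Grauert's cohomology and base change theorem shows that, after replacing $S$ by the dense open subset on which the upper semicontinuous functions $s\mapsto h^0(X_s,L_s^{\otimes k})$ are constant for these three values of $k$, the sheaves $\mathcal{E}_k:=f_*L^{\otimes k}$ are locally free and their formation commutes with arbitrary base change; in particular $\mathcal{E}_k\otimes k(s)\cong H^0(X_s,L_s^{\otimes k})$ and $\wedge^i\mathcal{E}_1\otimes k(s)\cong\wedge^i H^0(X_s,L_s)$.

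Over this open subset the relative multiplication maps $\mathcal{E}_1\otimes\mathcal{E}_k\to\mathcal{E}_{k+1}$ are defined, and combining them with the contraction of exterior powers produces a complex of locally free sheaves
$$
\wedge^{p+1}\mathcal{E}_1\otimes\mathcal{E}_{q-1}\ \stackrel{\alpha}{\longrightarrow}\ \wedge^{p}\mathcal{E}_1\otimes\mathcal{E}_{q}\ \stackrel{\beta}{\longrightarrow}\ \wedge^{p-1}\mathcal{E}_1\otimes\mathcal{E}_{q+1},
$$
with $\beta\circ\alpha=0$, whose formation commutes with base change term by term, and which specializes at each $s$ to the relevant stretch of the Koszul complex of $(X_s,L_s)$; thus the middle cohomology of the specialized complex is $K_{p,q}(X_s,L_s)$.

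The crux is that the middle cohomology of a complex of locally free sheaves does not in general commute with base change, so I would shrink $S$ once more. The ranks $r_\alpha(s):=\rk(\alpha\otimes k(s))$ and $r_\beta(s):=\rk(\beta\otimes k(s))$ are upper semicontinuous functions; since $S$ is integral, each is constant on a dense open subset, and I let $U\subset S$ be the (dense, open) locus on which both are constant. Over a reduced base, a morphism of locally free sheaves whose fibrewise rank is locally constant has locally free kernel, image, and cokernel, each compatible with base change; applying this to $\alpha$ and to $\beta$ over $U$ yields sub-bundles $\mathrm{im}(\alpha)\subseteq\ker(\beta)\subseteq\wedge^{p}\mathcal{E}_1\otimes\mathcal{E}_q$, and I define $\mathcal{K}_{p,q}(X/S,L):=\ker(\beta)/\mathrm{im}(\alpha)$ on $U$. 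Its fibre dimension is $\rk\bigl(\wedge^{p}\mathcal{E}_1\otimes\mathcal{E}_q\bigr)-r_\alpha(s)-r_\beta(s)$, which is constant on $U$, so the quotient is locally free; and tensoring the inclusion $\mathrm{im}(\alpha)\hookrightarrow\ker(\beta)$ with $k(s)$ --- where both terms, and their images inside the ambient fibre, are given by base change --- identifies $\mathcal{K}_{p,q}(X/S,L)\otimes k(s)$ with $\ker(\beta_s)/\mathrm{im}(\alpha_s)=K_{p,q}(X_s,L_s)$. Finally one extends $\mathcal{K}_{p,q}(X/S,L)$ from $U$ to a coherent sheaf on all of $S$ in an arbitrary way, the conclusion concerning only points of $U$.

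I expect the only non-formal step to be this last one: forcing base change to hold for the cohomology, and not merely the terms, of the relative Koszul complex. It is exactly this that requires passing to the open subset where the ranks of both Koszul differentials are constant; integrality of $S$ guarantees such a subset is dense, and reducedness is what makes Grauert's theorem and the constant-rank criterion applicable. Writing out the relative differentials and verifying $\beta\circ\alpha=0$ is routine. As a byproduct, the argument shows that $s\mapsto\dim K_{p,q}(X_s,L_s)$ is upper semicontinuous on $S$ and attains its minimum exactly on $U$, which is the semicontinuity behaviour implicit in the name of the theorem.
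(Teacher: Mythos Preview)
Your argument is correct. The paper itself does not supply a proof of this statement; it simply cites \cite{BG85} and \cite{Aprodu-Nagel: book}, so there is nothing to compare against beyond the one-line remark following the theorem.

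Two small comments. First, a terminological slip: the fibrewise ranks $r_\alpha(s)$ and $r_\beta(s)$ of a morphism between locally free sheaves are \emph{lower} semicontinuous, not upper (the locus where the rank is at least a given value is open). This does not affect your argument, since what you use is that each is constant---equal to its maximal, generic value---on a dense open subset of the integral base.

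Second, it is worth noting that the paper's remark that $U$ is ``precisely determined by the condition that all $h^i(X_s,L_s)$ are minimal'' corresponds only to your \emph{first} shrinking, the one that makes the pushforwards $\mathcal{E}_k$ locally free and compatible with base change. Your \emph{second} shrinking, to the locus where the Koszul differentials have constant rank, is what actually forces the middle cohomology to commute with base change, and is genuinely needed for the conclusion as stated; constancy of the $h^i$ alone does not prevent the ranks of $\alpha$ and $\beta$ from dropping. So your description of $U$ is the more accurate one.
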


In the statement above, the open set $U$ is precisely determined by the condition
that all $h^i(X_s,L_s)$ are minimal.

\medskip

By Theorem \ref{thm: semicontinuity}, Koszul cohomology can be
used to construct effective determinantal cycles on the moduli
spaces of smooth curves. This works particularly well
for Koszul cohomology of canonical curves, as $h^i$ remain
constant over the whole moduli space. More generally, Koszul
cycles can be defined over the relative Picard stack
over the moduli space. Under stronger
assumptions, the canonically defined determinantal structure
can be given a better description. To this end, one uses
the description provided by Lazarsfeld kernel bundles.

In many cases, for example canonical curves, or $2$-regular varieties, the
property $(N_p)$ reduces to the single condition $K_{p,2}(X.L)=0$, see for instance
\cite{Ein} Proposition 3.
This phenomenon justifies the study of various loci given
by the non-vanishing of $K_{p,2}$. Note however that extending
this determinantal description over the boundary of the moduli stack (especially over the locus of reducible stable curves) poses considerable
technical difficulties, see \cite{Fa06a}, \cite{Fa06b}.
We now describe a general set-up used to compute
Koszul non-vanishing loci over a partial compactification $\widetilde{\cM}_g$ of  the moduli space $\cM_g$ inside $\mm_g$. As usual, if $\textbf{M}$ is a Deligne-Mumford stack, we denote by $\cM$ its associated coarse moduli space.

\medskip

We fix integers $r, d\geq 1$, such that $\rho(g, r, d)=0$ and  denote by $\textbf{M}_g^0\subset \textbf{M}_g$ the open substack
classifying curves $[C]\in \cM_g$ such that $W_{d-1}^r(C)=
\emptyset$ and $W_d^{r+1}(C)= \emptyset$. Since $\rho(g, r+1, d)\leq -2$ and $\rho(g, r, d-1)=-r-1\leq -2$, it follows  from \cite{EH89} that
$\mathrm{codim}(\cM_g-\cM_g^0, \cM_g)\geq 2$. We further denote by
$\Delta_0^0\subset \Delta_0\subset \mm_g$ the locus of nodal curves
$[C_{yq}:=C/y\sim q]$, where $[C]\in \cM_{g-1}$ is a curve that satisfies the
Brill-Noether Theorem and $y, q\in C$ are \emph{arbitrary} points. Finally, $\Delta_1^0\subset \Delta_1\subset \mm_g$ denotes the open substack classifying curves $[C\cup_y E]$, where $[C]\in \cM_{g-1}$ is  Brill-Noether general, $y\in C$ is an arbitrary point and $[E, y]\in \mm_{1, 1}$ is an arbitrary elliptic tail. Note that every Brill-Noether general curve $[C]\in \cM_{g-1}$ satisfies  $$W_{d-1}^{r}(C)=\emptyset,  \
\ W_d^{r+1}(C)=\emptyset\  \mbox{ and } \mbox{ dim }W^r_d(C)=\rho(g-1, r,
d)=r.$$ We set $\pem_g:=\textbf{M}_g^0\cup \Delta_0^0\cup \Delta_1^0 \subset \rem_g$ and we regard it as a partial compactification of $\textbf{M}_g$. Then following \cite{EH86}
we consider the Deligne-Mumford stack
$$\sigma_0:\widetilde{\mathfrak G}^r_d\rightarrow \pem_g$$
classifying pairs $[C, l]$ with $[C]\in \widetilde{\cM}_g$ and $l$ is a limit linear series of type $\mathfrak g^r_d$ on $C$.
We remark that for any curve $[C]\in \cM_{g}^0\cup \Delta_0^0$ and $L\in W^r_d(C)$, we have that
$h^0(C, L)=r+1$ and that $L$ is globally generated. Indeed, for a smooth curve $[C]\in \cM_{g}^0$ it follows that $W_{d}^{r+1}(C)=\emptyset$,
so necessarily $W^r_d(C)=G^r_d(C)$.
For a point $[C_{yq}]\in \Delta_0^0$ we have the identification
$$\sigma_0^{-1}\bigl[C_{yq}\bigr]=\{L\in W^r_d(C): h^0(C,
L\otimes \OO_C(-y-q))=r\},$$
where we note that since the normalization $[C]\in \cM_{g-1}$ is assumed to be Brill-Noether general, any sheaf  $L\in \sigma_0^{-1}[C_{yq}]$ satisfies $$h^0(C, L\otimes \OO_C(-y))=h^0(C, L\otimes \OO_C(-q))=r$$ and $h^0(C, L)=r+1$. Furthermore,
$\overline{W}^r_d(C_{yq})=W^r_d(C_{yq})$, where the left-hand-side
denotes the closure of $W^r_d(C_{yq})$ inside the variety
$\overline{\mbox{Pic}}^d(C_{yq})$ of torsion-free sheaves on
$C_{yq}$. This follows because a non-locally free torsion-free sheaf in
$\overline{W}^r_d(C_{yq})-W^r_d(C_{yq})$ is of the form $\nu_*(A)$,
where $A\in W_{d-1}^r(C)$ and $\nu:C\rightarrow C_{yq}$ is the
normalization map. But we know that $W_{d-1}^r(C)=\emptyset$,
because $[C]\in \cM_{g-1}$ satisfies the Brill-Noether
Theorem. The conclusion of this discussion is that $\sigma: \widetilde{\mathfrak G}^r_d
\to\pem_g$ is proper. Since $\rho(g, r, d)=0$, by general Brill-Noether theory,
there exists a unique irreducible component of $\mathfrak G^r_d$
which maps onto $\textbf{M}_g^0$.

In \cite{Fa06b}, a universal Koszul non-vanishing locus over a partial compactification of the  moduli space of curves is introduced. Precisely, one constructs two locally free sheaves $\mathcal A$ and $\mathcal B$ over $\widetilde{\mathfrak
G}^r_d$ such that for a point $[C, l]$ corresponding to a \emph{smooth} curve $[C]\in \cM_g^0$ and a (necessarily complete and globally generated linear series) $l=(L, H^0(C, L))\in G^r_d(C)$ inducing a map $C\stackrel{|L|}\longrightarrow \mathbb P^r$,
we have the following description of the fibres
$$
\mathcal A(C,L)=H^0\bigl(\mathbb P^r, \wedge^p M_{\mathbb P^r}\otimes \OO_{\mathbb P^r}(2)\bigr) \mbox{
}\mbox{ and }\mathcal B(C,L)=H^0\bigl(C, \wedge^p M_L\otimes L^{\otimes 2}\bigr).
$$
There is a natural vector bundle morphism $\phi:\mathcal A\rightarrow \mathcal B$ given by
restriction. From Grauert's Theorem it follows that both $\mathcal A$ and $\mathcal B$
are vector bundles over $\mathfrak G^r_d$  and from Bott's Theorem (in the case of $\mathcal{A}$) and
Riemann-Roch (in the case of $\cB$) respectively,
we compute their ranks
$$
\mbox{rank}(\mathcal A)=(p+1){r+2 \choose p+2}\mbox{  and  }\mbox{
rank}(\mathcal B)={r\choose p}\Bigl(-\frac{pd}{r}+2d+1-g\Bigr).$$ Note that
$M_L$ is a stable vector bundle (again, one uses that $[C]\in \cM_g^0$), hence  $H^1(C, \wedge^p M_L\otimes L^{\otimes
2})=0$ and then $\mbox{rank}(\mathcal B)=\chi(C, \wedge^p M_L\otimes L^{\otimes 2})$ can be computed from Riemann-Roch.
We have the following result, cf. \cite{Fa06b} Theorem 2.1:

\begin{thm}\label{specsyzvirt}
\label{thm: ni}
The cycle
$$
\mathcal{U}_{g, p}:=\{(C,L)\in \mathfrak G^r_d:
K_{p,2}(C,L)\ne 0\},
$$
is the degeneracy locus of the vector bundle map
$\phi:\mathcal A\rightarrow \mathcal B$ over $\mathfrak G^r_d$.
\end{thm}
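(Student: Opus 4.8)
The plan is to show that the two sheaves $\mathcal{A}$ and $\mathcal{B}$ computed fibrewise above are realized as genuine vector bundles over $\mathfrak{G}^r_d$, that $\phi\colon\mathcal{A}\to\mathcal{B}$ is a bona fide morphism of these bundles, and that $K_{p,2}(C,L)$ is precisely the cokernel of $\phi$ at the point $(C,L)$; the degeneracy-locus statement then follows because $\mathcal{A}$ and $\mathcal{B}$ have the same rank (a numerical identity that must be checked from the rank formulae displayed above, using $\rho(g,r,d)=0$, i.e. $r(g-d+r)=g$), so that $\phi$ fails to be injective exactly when it fails to be surjective, i.e. exactly when the cokernel is nonzero.

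First I would set up the family-theoretic picture: over $\widetilde{\mathfrak{G}}^r_d$ there is a universal curve with a universal (limit) linear series, and for $[C,l]$ with $[C]\in\cM_g^0$ the series $l=(L,H^0(C,L))$ is complete and globally generated with $h^0(C,L)=r+1$, so one gets a universal morphism to a $\mathbb{P}^r$-bundle and a universal kernel bundle $M_L$ in families. By Lazarsfeld's description (Theorem \ref{prop: ker}) applied with $\mathcal{F}=\OO_C$, $q=2$, we have
$$K_{p,2}(C,L)\cong\coker\bigl(\wedge^{p+1}H^0(L)\otimes H^0(L)\to H^0(C,\wedge^p M_L\otimes L^{\otimes 2})\bigr),$$
and, pulling the Koszul relations on $\mathbb{P}^r$ back to $C$, the source of this map can be replaced by $H^0(\mathbb{P}^r,\wedge^p M_{\mathbb{P}^r}\otimes\OO(2))=\mathcal{A}(C,L)$, since the canonical surjection $H^0(\mathbb{P}^r,\OO(1))\otimes\OO_{\mathbb{P}^r}\twoheadrightarrow\OO(1)$ pulls back to the evaluation map for $L$; thus $K_{p,2}(C,L)=\coker\bigl(\phi(C,L)\colon\mathcal{A}(C,L)\to\mathcal{B}(C,L)\bigr)$, where the map is simply restriction of sections along $C\to\mathbb{P}^r$.

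Next I would verify that $\mathcal{A}$ and $\mathcal{B}$ are locally free of the stated ranks. For $\mathcal{B}=H^0(C,\wedge^p M_L\otimes L^{\otimes 2})$ one uses that $M_L$ is stable for $[C]\in\cM_g^0$ (a standard consequence of the genericity built into $\cM_g^0$, namely that $C$ carries no $\mathfrak{g}^r_{d-1}$ or $\mathfrak{g}^{r+1}_d$), hence $\wedge^p M_L\otimes L^{\otimes 2}$ is a semistable bundle of slope large enough to force $H^1=0$; then $h^0$ is constant, equal to $\chi$, and Grauert's Theorem gives local freeness with rank computed by Riemann–Roch as ${r\choose p}\bigl(-pd/r+2d+1-g\bigr)$. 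For $\mathcal{A}$ the fibre $H^0(\mathbb{P}^r,\wedge^p M_{\mathbb{P}^r}\otimes\OO(2))$ is independent of $(C,L)$ up to the $\mathrm{PGL}$-action on $\mathbb{P}^r$, so it is a bundle associated to the frame bundle of $M_L$; its rank $(p+1){r+2\choose p+2}$ is read off from Bott vanishing on $\mathbb{P}^r$. Then one checks $\mathrm{rank}(\mathcal{A})=\mathrm{rank}(\mathcal{B})$: substituting $g=r(g-d+r)$ turns both expressions into the same binomial quantity — this is the one genuinely computational point, but it is elementary binomial bookkeeping.

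The main obstacle I anticipate is not any single step above but the globalization: making precise that $\mathcal{A}$, $\mathcal{B}$, and $\phi$ extend as a morphism of vector bundles over \emph{all} of $\mathfrak{G}^r_d$ (not merely over the smooth locus $\cM_g^0$), since over $\Delta_0^0$ and $\Delta_1^0$ one is dealing with limit linear series on nodal curves and torsion-free sheaves, where "$M_L$" and "the map to $\mathbb{P}^r$" must be interpreted carefully, and stability of $M_L$ must be replaced by an appropriate $H^1$-vanishing argument. However, for the statement as written — where the cycle $\mathcal{U}_{g,p}$ is taken inside $\mathfrak{G}^r_d$ (smooth curves) — this difficulty is confined to citing the construction of \cite{Fa06b}; granting that construction, the proof reduces to the identification of the cokernel with $K_{p,2}$ and the rank equality, both of which are handled as above.
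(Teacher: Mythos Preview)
Your overall strategy matches the paper's setup exactly: Lazarsfeld's kernel-bundle description to identify $K_{p,2}(C,L)$ with $\coker\bigl(\phi(C,L)\bigr)$, Bott vanishing for the rank of $\mathcal A$, stability of $M_L$ plus Riemann--Roch and Grauert for the rank of $\mathcal B$. The paper does not give a self-contained proof either; it refers to \cite{Fa06b} for the construction and records precisely the ingredients you list.

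There is, however, a genuine error in your argument. You assert that $\mathrm{rank}(\mathcal A)=\mathrm{rank}(\mathcal B)$ follows from $\rho(g,r,d)=0$ (and you also misstate that condition: it is $(r+1)(g-d+r)=g$, not $r(g-d+r)=g$). This is false. For instance, with $g=6$, $r=2$, $d=6$, $p=0$ one has $\rho=0$ but $\mathrm{rank}(\mathcal A)=6$ and $\mathrm{rank}(\mathcal B)=7$. The paper is explicit about this: immediately after the theorem it says that the equality $\mathrm{rank}(\mathcal A)=\mathrm{rank}(\mathcal B)$ holds \emph{precisely} when $r=2s+sp+p$, $g=rs+s$, $d=rs+r$ for some $s\geq 1$, and only under this further restriction does $\mathcal U_{g,p}$ become a virtual divisor.

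The good news is that the theorem as stated does not require the rank equality, so your proof is salvageable once you drop that claim. The identification $K_{p,2}(C,L)=\coker\bigl(\phi(C,L)\bigr)$ already shows that $\mathcal U_{g,p}$ is the support of $\coker(\phi)$, hence carries the natural determinantal (Fitting) scheme structure of a degeneracy locus of $\phi$; this is all the statement asserts. The equivalence ``not surjective $\Leftrightarrow$ not injective'' that you invoke is only relevant for the subsequent virtual-divisor discussion, not for the theorem itself.
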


Under suitable numerical restrictions, when $\mathrm{rank}(\cA)=\mathrm{rank}(\cB)$, the cycle constructed
in Theorem \ref{thm: ni} is a virtual divisor on $\mathfrak G^r_d$. This happens precisely when
$$
r:=2s+sp+p,\ g:=rs+s \mbox{ and }
d:=rs+r.
$$
for some $p\geq 0$ and $s\geq 1$.  The first remarkable
case occurs when $s=1$. Set $g=2p+3$,
$r=g-1=2p+2$, and $d=2g-2=4p+4$. Note that, since the
canonical bundle is the only $\mathfrak g^{g-1}_{2g-2}$ on a curve of genus $g$, the
Brill-Noether stack is isomorphic to $\textbf{M}_g$.  The notable fact that the cycle
in question is an actual divisor follows directly from Voisin's
Theorem \ref{thm: Voisin}
and from Green's Hyperplane Section Theorem \cite{Voisin: odd}.

Hence $\mathcal{Z}_{g, p}:=\sigma_*(\mathcal{U}_{g, p})$ is a virtual
divisor on $\mathcal M_g$ whenever $$g=s(2s+sp+p+1).$$
In the next section, we explain how to extend the morphism $\phi:\cA\rightarrow \cB$ to a morphism of locally free sheaves over the stack $\widetilde{\mathfrak G}^r_d$ of limit linear series and reproduce the class formula
proved in \cite{Fa06a} for the degeneracy locus of this morphism.

\subsection{Divisors of small slope}

In \cite{Fa06a} it was shown that
the determinantal structure of $\mathcal{Z}_{g, p}$ can be extended
over $\overline{\mathcal M}_g$ in such a way that whenever $s\geq 2$, the resulting
virtual slope violates the Harris-Morrison Slope Conjecture. One has the
following general statement:

\begin{thm}
\label{thm: div}
If $\sigma:\widetilde{\mathfrak G}^r_d\rightarrow \pem_g$ is the
compactification of $\mathfrak G^r_d$ given by limit linear series,
then there exists a natural extension of the vector bundle map
$\phi: \mathcal A\rightarrow \mathcal B$ over $\widetilde{\mathfrak{G}}^r_d$ such
that $\overline{\mathcal{Z}}_{g, p}$ is the image of the degeneracy
locus of $\phi$. The class of the pushforward to $\pem_g$ of the
virtual degeneracy locus of $\phi$ is given by
$$
\sigma_*(c_1(\mathcal G_{p, 2}-\mathcal H_{p, 2}))
\equiv a\lambda-b_0\delta_0-b_1\delta_1-\cdots -b_{[\frac{g}{2}]} \delta_{[\frac{g}{2}]},
$$
where $a, b_0, \ldots, b_{[\frac{g}{2}]}$ are explicitly given coefficients
such that $b_1=12b_0-a$,  $b_i\geq b_0$ for $1\leq i\leq [g/2]$ and
$$
s\bigl(\sigma_*(c_1(\mathcal G_{p, 2}-\mathcal H_{p, 2}))\bigr)=
\frac{a}{b_0}=6\frac{f(s,p)}{(p+2)\ s h(s, p)}, \mbox{ with}
$$
\begin{center}
$f(s,p)=(p^4+24p^2+8p^3+32p+16)s^7+(p^4+4p^3-16p-16)s^6-(p^4+7p^3+13p^2-12)s^5-
(p^4+2p^3+p^2+14p+24)s^4
+(2p^3+2p^2-6p-4)s^3+(p^3+17p^2+50p+41)s^2+(7p^2+18p+9)s+2p+2$
\end{center}
and
\noindent
\begin{center}
$h(s,p)=(p^3+6p^2+12p+8)s^6+(p^3+2p^2-4p-8)s^5-(p^3+7p^2+11p+2)s^4-\newline
-(p^3-5p)s^3+(4p^2+5p+1)s^2+ (p^2+7p+11)s+4p+2.$
\end{center}
Furthermore, we have that $$6<\frac{a}{b_0}<6+\frac{12}{g+1}$$
whenever $s\geq 2$. If the morphism $\phi$ is generically
non-degenerate, then $\overline{\mathcal{Z}}_{g, p}$ is a divisor on
$\overline{\mathcal M}_g$ which gives a counterexample to the Slope Conjecture for
$g=s(2s+sp+p+1)$.
\end{thm}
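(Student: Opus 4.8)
The plan is to follow the strategy of \cite{Fa06a}: first extend the vector bundles $\mathcal A$ and $\mathcal B$ and the morphism $\phi$ across the two boundary strata $\Delta_0^0$ and $\Delta_1^0$ using the theory of limit linear series of Eisenbud--Harris \cite{EH86}, then compute the first Chern classes of the extended sheaves $\mathcal G_{p,2}$ and $\mathcal H_{p,2}$ over $\widetilde{\mathfrak G}^r_d$, push forward along $\sigma$, and finally extract the slope as the ratio $a/b_0$ and verify the stated inequalities. First I would set up the extension over $\Delta_0^0$: for a point $[C_{yq}]\in\Delta_0^0$ and a limit linear series $l\in\sigma_0^{-1}[C_{yq}]$ realized as $L\in W^r_d(C)$ with $h^0(C,L(-y-q))=r$, one replaces the kernel bundle $M_L$ on the smooth curve by the kernel of evaluation of $L$ on the normalization $C$ twisted down suitably; the sheaves $\wedge^p M_L\otimes L^{\otimes 2}$ still have vanishing $H^1$ by the stability argument already invoked in the smooth case, so Grauert's theorem keeps $\mathcal B$ locally free, while $\mathcal A$ is a constant bundle built from $M_{\mathbb P^r}$ and so extends trivially. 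The extension over $\Delta_1^0$ is analogous but uses that adding an elliptic tail does not change the limit $\mathfrak g^r_d$ data on the genus $g-1$ component, so $K_{p,2}$ is unaffected. This is exactly the technical heart referenced to \cite{Fa06a}, and I would simply cite those computations rather than redo them.

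Next, granting the extension, I would compute $c_1(\mathcal G_{p,2})$ and $c_1(\mathcal H_{p,2})$ on $\widetilde{\mathfrak G}^r_d$ in terms of the tautological classes: the Hodge class $\lambda$, the boundary classes $\delta_i$ pulled back from $\mm_g$, and the classes coming from the universal Picard/Grassmann bundle structure on $\mathfrak G^r_d$ (the $\mathfrak a$- and $\mathfrak b$-type classes in the notation of \cite{Fa06b}). Since $\mathcal A$ is assembled from $M_{\mathbb P^r}$ via Bott vanishing and $\mathcal B$ from $\wedge^p M_L\otimes L^{\otimes 2}$ via Riemann--Roch on the universal curve, both Chern classes are polynomial expressions in these tautological classes with the binomial coefficients already recorded in the rank formulas $\mathrm{rank}(\mathcal A)=(p+1)\binom{r+2}{p+2}$ and $\mathrm{rank}(\mathcal B)=\binom{r}{p}(-pd/r+2d+1-g)$. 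Pushing forward $c_1(\mathcal G_{p,2}-\mathcal H_{p,2})$ along $\sigma$ (using that $\sigma$ is proper and generically finite of known degree onto its image, and that $\mathfrak G^r_d$ is irreducible of the expected dimension over $\textbf{M}_g^0$) converts these into a class $a\lambda-\sum b_i\delta_i$ on $\pem_g$. The relation $b_1=12b_0-a$ is forced, as always, by the pullback relation under the map $\mm_{1,1}\times\mm_{g-1,1}\to\mm_g$ attaching an elliptic tail, and $b_i\ge b_0$ follows from the standard estimates comparing the $\delta_i$ coefficients via admissible covers.

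Finally, with $a$ and $b_0$ in hand as explicit polynomials in $s$ and $p$ — these are the stated $f(s,p)$ and $h(s,p)$ up to the elementary prefactor $6/((p+2)sh(s,p))$ — I would verify the two inequalities $6<a/b_0$ and $a/b_0<6+12/(g+1)$ for all $s\ge 2$, $p\ge 0$, where $g=s(2s+sp+p+1)$. This reduces to two polynomial inequalities in the nonnegative integers $s-2$ and $p$, which can be checked by writing each side over a common denominator and exhibiting the numerator as a polynomial with nonnegative coefficients in $(s-2)$ and $p$ (the case $s=1$ being excluded precisely because there the slope equals the Brill-Noether value $6+12/(g+1)$, recovering Voisin's divisor rather than beating the bound). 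The last sentence, that $\overline{\mathcal Z}_{g,p}$ is then an honest effective divisor violating the Slope Conjecture, is immediate once one knows $\phi$ is generically non-degenerate: the degeneracy locus of a morphism between bundles of equal rank is either everything or a divisor of the expected class, and generic non-degeneracy rules out the former. I expect the main obstacle to be the boundary extension over $\Delta_0^0$ and especially $\Delta_1^0$ — making the determinantal structure of $\mathcal U_{g,p}$ genuinely functorial across non-locally-free torsion-free sheaves and across nodal curves with elliptic tails is where all the technical subtlety of \cite{Fa06a} lies; the Chern class bookkeeping and the final polynomial inequalities, while lengthy, are routine by comparison.
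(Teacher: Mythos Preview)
The paper does not actually prove this theorem; it is quoted from \cite{Fa06a} (with the class computation also carried out in \cite{Fa06b}), and only a few remarks follow its statement in the text. Your outline reproduces precisely the strategy of those references---extend $\phi:\mathcal A\to\mathcal B$ over $\Delta_0^0$ and $\Delta_1^0$ using limit linear series, express $c_1(\mathcal G_{p,2}-\mathcal H_{p,2})$ in tautological classes, push forward along the proper map $\sigma$, and then verify the polynomial inequalities for the slope---and you correctly identify the boundary extension as the technical core.

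One imprecision worth flagging: your description of the extension over $\Delta_1^0$ as ``adding an elliptic tail does not change the limit $\mathfrak g^r_d$ data on the genus $g-1$ component, so $K_{p,2}$ is unaffected'' is too quick. The fibre $\sigma^{-1}[C\cup_y E]$ is not simply $G^r_d(C)$; it consists of limit linear series whose $E$-aspect has a prescribed cusp at $y$, and the $C$-aspect is a $\mathfrak g^r_d$ on $C$ with a compatible ramification condition at $y$. The extension of $\mathcal A$ and $\mathcal B$ over this stratum, and the verification that the degeneracy locus still computes the right Koszul condition, requires the full aspect analysis carried out in \cite{Fa06a}. Similarly, your justification of $b_i\geq b_0$ via ``standard estimates\ldots via admissible covers'' is vague; in the references this is obtained by intersecting with explicit test curves in the boundary, not by a general principle. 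But you already flag the boundary work as the genuine obstacle, so these are refinements rather than gaps, and your proposal matches what the paper cites.
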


A few remarks are necessary. In the case $s=1$ and $g=2p+3$, the vector bundles $\cA$ and $\cB$ exist not only over a partial compactification of $\pem_g$ but can be extended
(at least) over the entire stack $\textbf{M}_g\cup \Delta_0$ in such a way that
$\cB(C, \omega_C)=H^0(C, \wedge^p M_{\omega_C}\otimes \omega_C^2)$ for any $[C]\in \cM_g\cup \Delta_0$.
Theorem \ref{thm: div} reads in this case, see also \cite{Fa08} Theorem 5.7:
\begin{equation}\label{hurwitz}
[\overline{\cZ}_{2p+3, p}]^{virt}=c_1(\cB-\cA)=\frac{1}{p+2}{2p\choose p}\Bigl(6(p+3)\lambda-(p+2)\delta_0-6(p+1)\delta_1\Bigr),
\end{equation}
in particular $s([\overline{\cZ}_{2p+3, p}]^{virt})=6+12/(g+1)$.

Particularly interesting is the case $p=0$ when the condition $K_{0, 2}(C, L)=0$ for $[C, L]\in \mathfrak{G}^r_d$, is equivalent to the multiplication map $$\nu_2(L): \mbox{Sym}^2 H^0(C, L)\rightarrow H^0(C, L^{\otimes 2})$$ not being an isomorphism. Note that $\nu_2(L)$ is a linear map between vector spaces of the same dimension and $\cZ_{g, 0}$ is the failure locus of the Maximal Rank Conjecture:
\begin{cor}\label{maxrankk}
For $g=s(2s+1), r=2s, d=2s(s+1)$ the slope of the virtual class of
the locus of those $[C]\in \mm_g$ for which there exists $L\in
W^r_d(C)$ such that the embedded curve $C\stackrel{|L|}\hookrightarrow \PP^r$ sits on a
quadric hypersurface, is equal to
$$s(\overline{\mathcal{Z}}_{s(2s+1),
0})=\frac{3(16s^7-16s^6+12s^5-24s^4-4s^3+41s^2+9s+2)}{s
(8s^6-8s^5-2s^4+s^2+11s+2)}.$$
\end{cor}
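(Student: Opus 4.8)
The plan is to specialize Theorem \ref{thm: div} to the numerical case $p=0$ and simply record the resulting slope. First I would observe that setting $p=0$ in the relations $r=2s+sp+p$, $g=rs+s$, $d=rs+r$ yields exactly $r=2s$, $g=s(2s+1)$, $d=2s(s+1)$, so that Corollary \ref{maxrankk} is nothing but the $p=0$ instance of the family of divisors $\overline{\cZ}_{g, p}$ discussed in Theorem \ref{thm: div}. Next I would note that when $p=0$ the vanishing $K_{0,2}(C,L)=0$ is equivalent, by the definition of Koszul cohomology at $(p,q)=(0,2)$ (the cokernel of $\mathrm{Sym}^2 H^0(L)\to H^0(L^{\otimes 2})$, via the identification $\wedge^1 H^0(L)\otimes H^0(L)\to H^0(L^{\otimes 2})$ whose symmetrization is $\nu_2(L)$), to $\nu_2(L)$ failing to be an isomorphism; here one uses $\rho(g,r,d)=0$ together with Riemann-Roch to check $\dim \mathrm{Sym}^2 H^0(C,L)=\binom{r+2}{2}=h^0(C,L^{\otimes 2})$, so that $\nu_2(L)$ is indeed a map between spaces of equal dimension and its degeneracy locus is $\cZ_{g,0}$.

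The only remaining task is to substitute $p=0$ into the slope formula $\frac{a}{b_0}=6\,\frac{f(s,p)}{(p+2)\,s\,h(s,p)}$ from Theorem \ref{thm: div}. At $p=0$ the polynomial $f(s,0)$ collapses to $16s^7-16s^6+12s^5-24s^4-4s^3+41s^2+9s+2$, the polynomial $h(s,0)$ collapses to $8s^6-8s^5-2s^4+s^2+11s+2$, and the prefactor $(p+2)$ becomes $2$; hence
\begin{equation*}
s(\overline{\cZ}_{s(2s+1),0})=\frac{6\,(16s^7-16s^6+12s^5-24s^4-4s^3+41s^2+9s+2)}{2\,s\,(8s^6-8s^5-2s^4+s^2+11s+2)},
\end{equation*}
which simplifies to the claimed expression $\frac{3(16s^7-16s^6+12s^5-24s^4-4s^3+41s^2+9s+2)}{s(8s^6-8s^5-2s^4+s^2+11s+2)}$. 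I would also remark that the inequality $6<a/b_0<6+12/(g+1)$ for $s\geq 2$ is inherited verbatim from Theorem \ref{thm: div}, so for $s\geq 2$ this locus is again a (virtual) counterexample to the Slope Conjecture, while for $s=1$ one recovers $g=3$, a degenerate case.

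The main obstacle is not conceptual but bookkeeping: one must be careful that the generic non-degeneracy hypothesis of Theorem \ref{thm: div} is what makes $\overline{\cZ}_{g,0}$ an honest divisor with the stated slope, so the corollary is properly a statement about the \emph{virtual} class unless that non-degeneracy is known; accordingly I would phrase the conclusion, as the statement does, in terms of the slope of the virtual class. A secondary point to verify is that the identification of $K_{0,2}(C,L)\neq 0$ with non-injectivity of $\nu_2(L)$ is valid for \emph{all} $[C,L]\in \mathfrak{G}^r_d$ in the relevant partial compactification (not merely the generic one), which follows from the global generation and $h^0(C,L)=r+1$ properties established for curves in $\cM_g^0\cup \Delta_0^0$ earlier in Section \ref{sec: cycles}. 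Granting these, the corollary is an immediate specialization and no further argument is needed.
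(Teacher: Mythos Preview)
Your proposal is correct and matches the paper's approach: Corollary~\ref{maxrankk} is presented in the paper without a separate proof precisely because it is the direct specialization of Theorem~\ref{thm: div} to $p=0$, and your verification of the numerical constraints and the polynomial evaluations $f(s,0)$, $h(s,0)$ is exactly what is needed. One small slip to fix: you write that ``the vanishing $K_{0,2}(C,L)=0$ is equivalent \dots\ to $\nu_2(L)$ failing to be an isomorphism,'' but of course it is the \emph{non}-vanishing $K_{0,2}(C,L)\neq 0$ that corresponds to $\nu_2(L)$ not being an isomorphism (the paper's own text contains the same slip just before the corollary); your subsequent identification of the degeneracy locus shows you have the correct direction in mind.
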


\section{Conjectures on Koszul cohomology of curves}

\subsection{Green's Conjecture}

In what follows, we consider $(C,K_C)$ a
smooth canonical curve of genus $g\ge 2$. In this case, the
Duality Theorem \ref{thm: duality} applies,
and the distribution of the numbers
$b_{p,q}:=\dim\ K_{p,q}(C,K_C)$ organized in a table (the {\em
Betti table}) is the following:

\[
\left[
\begin{array}{cccccc}
  b_{0,0}	& 0		 &   0  	 & \ldots & 0  		 & 0\\
  0 		& b_{1,1} & b_{2,1} & \ldots & b_{g-3,1}& b_{g-2,1}\\
  b_{0,2}	& b_{1,2} & b_{2,2} & \ldots & b_{g-3,2}& 0\\
  0		&   0  	 & 0 		 & \ldots & 0 		 & b_{g-2,3}
\end{array}
\right]
\]
The Betti table is symmetric with respect to its center, that is,
$b_{i, j}=b_{g-2-i, 3-j}$
and all the other entries not marked here are zero.

Trying to apply the Non-Vanishing Theorem \ref{thm: GL nonvan}
to the canonical bundle $K_C$, we obtain one
{\em condition} and one {\em quantity}. The condition
comes from the hypothesis that for a decomposition
$K_C=L_1\otimes (K_C\otimes L_1^{\vee})$, Theorem \ref{thm: GL nonvan}
is applicable
whenever
\begin{equation}
\label{eqn: Cliff contrib}
r_1+1:=h^0(C, L_1)\ge 2\mbox{ and }r_2+1:=h^1(C, L_1)\ge 2.
\end{equation}
A line bundle $L_1$ satisfying (\ref{eqn: Cliff contrib}) is said
to {\em contribute to the Clifford index of $C$}.

The quantity that appears in Theorem \ref{thm: GL nonvan} is
the {\em Clifford index} itself. More precisely
$$
r_1+r_2-1=g-\Cliff(L_1)-2,
$$
where
$$
\Cliff(L_1):=\deg(L_1)-2h^0(L_1)+2.
$$
Clifford's Theorem \cite{ACGH} says that $\Cliff(L_1)\ge 0$,
and $\Cliff(L_1)>0$ unless $L_1$ is a $\mathfrak{g}^1_2$.
Following \cite{GL86} we define the {\em Clifford index of $C$} as
the quantity
$$
\Cliff(C):=\mbox{min}\{\Cliff(L_1):\ L_1\mbox{ contributes to the
Clifford index of } C\}.
$$

In general, the Clifford
index will be computed by minimal pencils.
Specifically, a general $d$-gonal curve $[C]\in \cM_{g, d}^1$ (recall
that the gonality strata are irreducible) will have $\Cliff(C)=d-2$.
However, this equality is not valid for all curves, that is,
there exist curves $[C]\in \cM_g$ with  $\Cliff(C)<\gon(C)-2$, basic
examples being plane curves, or exceptional curves
on $K3$ surfaces. Even in these exotic cases, Coppens and
Martens \cite{CM} established the precise relation $\Cliff(C)=\gon(C)-3$.

\medskip

Theorem \ref{thm: GL nonvan} implies the following non-vanishing
$$
K_{g-\mathrm{Cliff}(C)-2,1}(C,K_C)\ne 0,
$$
and Green's Conjecture predicts optimality of Theorem \ref{thm: GL nonvan}
for canonical curves:

\begin{con}
\label{conj: Green}
For any curve $[C]\in \cM_g$ we have the vanishing $K_{p,1}(C,K_C)=0$ for all
$p\ge g-\mathrm{Cliff}(C)-1$.
\end{con}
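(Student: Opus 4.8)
\medskip
The plan is to rewrite Conjecture \ref{conj: Green} in its ``vanishing'' form, collapse it to a single extremal Koszul group, and then prove that group vanishes one gonality stratum at a time, importing the generic case from canonical curves lying on $K3$ surfaces. First I would dualize: since the Betti table of a canonical curve is centrally symmetric, $b_{p,1}=b_{g-2-p,2}$ (Theorem \ref{thm: duality}), the vanishing $K_{p,1}(C,K_C)=0$ for all $p\ge g-\Cliff(C)-1$ is equivalent to $K_{q,2}(C,K_C)=0$ for all $q\le\Cliff(C)-1$, i.e. to the open implication in (\ref{grr}). For a canonical curve the property $(N_p)$ of Definition \ref{defn: Np} is equivalent to the single vanishing $K_{p,2}(C,K_C)=0$, and $(N_p)$ formally implies $(N_{p-1})$; hence it suffices to prove the extremal statement $K_{\Cliff(C)-1,2}(C,K_C)=0$ for every curve $C$. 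By the Coppens--Martens bound $\Cliff(C)\ge\gon(C)-3$ (cf. \cite{CM}) together with the same monotonicity, this reduces in turn to the uniform vanishing $K_{d-3,2}(C,K_C)=0$ for every smooth curve $C$ with $d:=\gon(C)$.

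Next I would feed in the stratum-by-stratum results already available. For $d\le[g/2]+1$ this last vanishing is exactly the conclusion of Theorem \ref{aprodu1}, conditional on the Brill--Noether estimate $\dim W^1_{g-d+2}(C)\le g-d+2$; the dimension theorems of Martens, Mumford and Keem (cf. \cite{ACGH}) bound $\dim W^1_{g-d+2}(C)$ strictly below $g-d+2$ for every non-hyperelliptic curve, with near-equality occurring only along an explicit list of exceptional families (smooth plane curves, bielliptic and tetragonal curves, exceptional curves on $K3$ surfaces), each of which I would treat separately -- via explicit Betti numbers, via reduction to a curve of lower genus through the bielliptic involution, or via the direct analysis of syzygies of $K3$ sections. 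For $d=[g/2]+2$, which in odd genus $g=2k+1$ is the maximal gonality $k+2$, one instead combines Voisin's Theorem \ref{thm: Voisin} -- the generic Green Conjecture, proved by degenerating the canonical curve to a hyperplane section of a suitable $K3$ -- with the Hirschowitz--Ramanan theorem \cite{HR98} and Green's hyperplane section theorem, which upgrade the generic statement to \emph{every} curve of maximal gonality. Since every curve of even genus has $\gon(C)\le[g/2]+1$ by the existence half of Brill--Noether, these two ranges of $d$ exhaust all curves (the hyperelliptic case being classical), and Theorem \ref{thm: semicontinuity} is what transports the vanishing across each stratum: the hypothesis on $W^1_{g-d+2}(C)$ is precisely what guarantees that the locus $\{K_{d-3,2}\ne 0\}$ does not jump as the minimal pencil is degenerated to one on a curve where the Conjecture is already known.

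The hard part -- and the reason the Conjecture is not yet settled for an arbitrary curve without such hypotheses -- is the interplay of the last two points: one must control the minimal pencils along the degeneration precisely for the curves where the Brill--Noether estimate is borderline, and, more fundamentally, one has as yet no \emph{intrinsic} substitute, valid for every curve, for Voisin's passage to a $K3$ surface, hence no uniform mechanism producing $K_{\Cliff(C)-1,2}(C,K_C)=0$ directly from a line bundle computing $\Cliff(C)$. Closing this gap in full generality would seem to require a genuinely new idea -- for instance a stability-and-cohomology argument for the kernel bundle $M_{K_C}$ twisted by a minimal Clifford-index bundle, a treatment through secant varieties of the canonical curve, or a ``Prym--Clifford''-type invariant along the lines raised above.
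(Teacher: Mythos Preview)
This statement is a conjecture; the paper does not prove it, and in full generality it remains open. The paper surveys the known partial results (Theorems \ref{thm: HRV}, \ref{thm: small BN}, \ref{thm: Voisin-Schreyer}--\ref{thm: Green large genus}, \ref{thm: Aprodu-Pacienza}), and your final paragraph correctly concedes that a genuinely new idea is still missing. But the body of your proposal contains a concrete error in the reduction step, not merely an incomplete case analysis.

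Your claim that Green's Conjecture reduces to the \emph{uniform} vanishing $K_{d-3,2}(C,K_C)=0$ for every curve of gonality $d$ targets a statement that is \emph{false}. By Theorem \ref{thm: GL nonvan} and duality one always has $K_{\Cliff(C),2}(C,K_C)\ne 0$. For any curve with $\Cliff(C)=\gon(C)-3$ --- smooth plane curves, or the exceptional $K3$-sections of Theorem \ref{thm: Aprodu-Pacienza} --- this reads precisely $K_{d-3,2}(C,K_C)\ne 0$. So these curves cannot be ``treated separately'' toward $K_{d-3,2}=0$: that group is nonzero there, and Green's Conjecture only asks for $K_{d-4,2}=0$. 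The Coppens--Martens dichotomy must be kept in the \emph{target} of the reduction, not discarded.

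There is a second slip concerning the hypothesis of Theorem \ref{aprodu1}. The correct condition (compare Theorem \ref{thm: small BN}) is $\dim W^1_{g-d+2}(C)\le g-2d+2=\rho(g,1,g-d+2)$, not $g-d+2$; Martens' theorem only gives $\le g-d-1$, which does \emph{not} imply the required bound once $d\ge 4$. The paper's Theorems \ref{thm: Voisin-Schreyer}, \ref{thm: Green hexagonal} and \ref{thm: Green large genus} show exactly how far the Mumford--Keem refinements carry this, and the curves violating (\ref{eqn: small BN}) --- far from forming a short list to be dispatched case by case --- are precisely the locus where Conjecture \ref{conj: Green} is still open.
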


In the statement of Green's Conjecture, it suffices
to prove the vanishing of $K_{g-\mathrm{Cliff}(C)-1,1}(C,K_C)$ or,
by duality, that $K_{\mathrm{Cliff}(C)-1,2}(C,K_C)=0$.

\vskip 3pt

We shall analyze some basic cases:

\begin{ex}
\label{ex: Noether}
{\rm Looking at $K_{0,2}(C,K_C)$, Green's Conjecture
predicts that it is zero for all non-hyperelliptic curves.
Or, the vanishing of $K_{0,2}(C, K_C)$ is equivalent to
the projective normality of the canonical curve. This
is precisely the content of the classical Max Noether Theorem \cite{ACGH}, p. 117.}
\end{ex}

\begin{ex}
\label{ex: Enriques-Petri}
{\rm For a non-hyperelliptic curve,
we know that $K_{1,2}(C,K_C)=0$ if and only
if the canonical curve $C\subset \mathbb P^{g-1}$
is cut out by quadrics. Green's Conjecture predicts that
$K_{1,2}(C,K_C)=0$ unless the curve is hyperelliptic, trigonal
or a smooth plane quintic. This is precisely the Enriques-Babbage-Petri
Theorem, see \cite{ACGH}, p. 124.}
\end{ex}

Thus Conjecture \ref{conj: Green} appears as  a sweeping generalization of two famous classical theorems. Apart from these classical results, strong evidence has been found for Green's Conjecture
(and one should immediately add, that not a shred of evidence has been found suggesting that the conjecture
might fail even for a single curve $[C]\in \cM_g$).
For instance, the conjecture is true
for general curves in \emph{ any} gonality stratum $\cM_{g, d}^1$,
see \cite{Ap05}, \cite{Teixidor02} and \cite{Voisin: even}. The proof of this
fact relies on semi-continuity. Since
 $\cM_{g, d}^1$ is irreducible,  it suffices to find one example
of a $d$-gonal curve that satisfies the conjecture,
for any $2\le d\le (g+2)/2$; here we also need the
fact mentioned above, that the Clifford index
of a general $d$-gonal curve is $d-2$. The most
important and challenging case, solved
by Voisin  \cite{Voisin: even}, was the case of curves of odd genus $g=2d-1$
and maximal gonality $d+1$.
Following Hirschowitz and Ramanan \cite{HR98} one can compare the Brill-Noether  divisor
$\cM_{g, d}^1$ of curves  with a $\mathfrak g^1_d$ and the virtual divisor of curves $[C]\in \cM_{g}$
with $K_{d-1, 1}(C, K_C)\neq 0$. The non-vanishing Theorem \ref{thm: GL nonvan}
gives a set-theoretic inclusion $\mathcal{M}^1_{g,d}\subset\mathcal{Z}_{g,d-2}$.
Now, we compare the class $[\mathcal{Z}_{g, d-2}]^{virt}\in \mbox{Pic}(\cM_g)$ of the virtual divisor
$\mathcal{Z}_{g,d-2}$ to the class
$[\mathcal{M}^1_{g,d}]$ computed in \cite{HM}. One
finds the following relation $$[\cZ_{g, d-2}]^{virt}=(d-1)[\cM_{g, d}^1]\in \mathrm{Pic}(\cM_g)$$ cf. \cite{HR98}; Theorem \ref{thm: div} in the particular case $s=1$
provides an extension of this equality to a partial compactification of $\cM_g$.
Green's Conjecture for general curves of odd genus \cite{Voisin: odd} implies
that $\mathcal{Z}_{g,d-2}$ is a genuine divisor on $\cM_g$. Since a general curve $[C]\in \cM_{g, d}^1$  satisfies
$$\dim\ K_{d-1,1}(C,K_C)\ge d-1,$$ cf. \cite{HR98}, one finds the set-theoretic equality
$\mathcal{M}^1_{g,d} = \mathcal{Z}_{g,d-2}$. In particular
we obtain the following strong characterization of curves of odd genus and maximal gonality:

\begin{thm}[Hirschowitz-Ramanan, Voisin]
\label{thm: HRV}
If $C$ is a smooth curve of genus $g=2d-1\geq 7$, then
$K_{d-1,1}(C,K_C)\ne 0$ if and only if $C$ carries a $\mathfrak{g}^1_d$.
\end{thm}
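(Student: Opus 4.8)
The plan is to prove both implications simultaneously by comparing, inside $\cM_g$, the Koszul locus $\cZ_{g,d-2}$ with the gonality divisor $\cM^1_{g,d}$: the Green--Lazarsfeld non-vanishing theorem gives one inclusion essentially for free, while Voisin's theorem together with the Hirschowitz--Ramanan class computation forces the opposite one. Throughout set $p:=d-2$, so $g=2p+3$; by the symmetry $b_{i,j}=b_{g-2-i,3-j}$ of the Betti table of a canonical curve (a consequence of Theorem \ref{thm: duality}) one has $b_{d-2,2}=b_{d-1,1}$, hence $\cZ_{g,d-2}=\{[C]\in\cM_g:\ K_{d-1,1}(C,K_C)\neq 0\}$. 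By Theorem \ref{thm: ni} this is a closed determinantal locus: in the canonical case the Brill--Noether stack is $\mathbf{M}_g$ itself, and $\cZ_{g,d-2}$ is the degeneracy locus of a morphism $\phi:\cA\to\cB$ of vector bundles of equal rank whose cokernel at a smooth curve is $K_{d-1,1}(C,K_C)$.

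For the implication ``$C$ has a $\mathfrak g^1_d$ $\Rightarrow$ $K_{d-1,1}(C,K_C)\neq 0$'' I would argue as follows. The general member of the irreducible locus $\cM^1_{g,d}$ has Clifford index $\Cliff(C)=d-2$, so Theorem \ref{thm: GL nonvan} applied to the canonical bundle yields $K_{g-\Cliff(C)-2,1}(C,K_C)=K_{d-1,1}(C,K_C)\neq 0$; thus the general point of $\cM^1_{g,d}$ lies in the closed set $\cZ_{g,d-2}$, and by irreducibility of $\cM^1_{g,d}$ the whole $\cM^1_{g,d}$ is contained in $\cZ_{g,d-2}$. Since every curve carrying a $\mathfrak g^1_d$ lies in $\cM^1_{g,d}$, this gives the implication. (Equivalently, one applies Theorem \ref{thm: GL nonvan} directly to the decomposition $K_C=A\otimes(K_C\otimes A^{\vee})$ with $A$ a base-point-free pencil of minimal degree $\gon(C)\le d$, using $h^0(A)=2$ and $h^0(K_C\otimes A^{\vee})=g-\gon(C)+1$ by Riemann--Roch.)

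The reverse implication amounts to the set-theoretic equality $\cZ_{g,d-2}=\cM^1_{g,d}$, which I would obtain by a class computation. First, Voisin's Theorem \ref{thm: Voisin} shows that the general curve of genus $g=2d-1$ has $K_{d-1,1}(C,K_C)=0$, so $\cZ_{g,d-2}$, and its closure $\overline{\cZ}_{g,d-2}$ in $\mm_g$, is a genuine divisor of the expected codimension $1$. Hence its class is given by Thom--Porteous, $[\overline{\cZ}_{g,d-2}]=[\overline{\cZ}_{g,d-2}]^{virt}=c_1(\cB-\cA)$, and, since $\coker\phi\cong K_{d-1,1}(C,K_C)$, an irreducible component $Y$ of $\overline{\cZ}_{g,d-2}$ occurs in this cycle with multiplicity equal to the generic value of $\dim K_{d-1,1}(C,K_C)$ along $Y$. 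Comparing formula (\ref{hurwitz}) (the case $s=1$ of Theorem \ref{thm: div}) with the class of $\mm^1_{g,d}$ computed in \cite{HM} gives $[\overline{\cZ}_{g,d-2}]^{virt}=(d-1)[\mm^1_{g,d}]$ in $\Pic(\mm_g)$. On the other hand the Hirschowitz--Ramanan estimate \cite{HR98}, $\dim K_{d-1,1}(C,K_C)\ge d-1$ for a general $d$-gonal curve, says that $\cM^1_{g,d}$ occurs in $\overline{\cZ}_{g,d-2}$ with multiplicity at least $d-1$. Therefore $\overline{\cZ}_{g,d-2}-(d-1)\mm^1_{g,d}$ is an effective divisor with numerically trivial class on the projective variety $\mm_g$, hence is zero, so $\overline{\cZ}_{g,d-2}=(d-1)\mm^1_{g,d}$ as cycles. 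Restricting to $\cM_g$, the underlying sets of $\cZ_{g,d-2}$ and $\cM^1_{g,d}$ coincide, which is exactly the statement.

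The crux, and the only genuinely hard input, is the assertion that $\cZ_{g,d-2}$ is a proper divisor rather than all of $\cM_g$, i.e.\ that the general curve of odd genus has $K_{d-1,1}(C,K_C)=0$: this is Voisin's solution of the generic Green Conjecture, Theorem \ref{thm: Voisin}, and without it the Thom--Porteous class and the multiplicity comparison are meaningless. The remaining subtlety is purely bookkeeping: one must check that the coefficient $d-1$ appearing in $[\overline{\cZ}_{g,d-2}]^{virt}=(d-1)[\mm^1_{g,d}]$ really does match the multiplicity lower bound coming from $\dim K_{d-1,1}(C,K_C)\ge d-1$ on the general $d$-gonal curve, so that the effective residual cycle $\overline{\cZ}_{g,d-2}-(d-1)\mm^1_{g,d}$ is forced to vanish and $\cZ_{g,d-2}$ can have no component other than $\cM^1_{g,d}$.
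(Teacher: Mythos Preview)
Your proof is correct and follows essentially the same approach as the paper: both establish the inclusion $\cM^1_{g,d}\subset\cZ_{g,d-2}$ via the Green--Lazarsfeld non-vanishing theorem, invoke Voisin's theorem to ensure $\cZ_{g,d-2}$ is a genuine divisor, match the class identity $[\cZ_{g,d-2}]^{virt}=(d-1)[\cM^1_{g,d}]$ against the multiplicity bound $\dim K_{d-1,1}(C,K_C)\ge d-1$ from \cite{HR98}, and conclude set-theoretic equality. The only cosmetic difference is that you phrase the final ``effective cycle with trivial class must vanish'' step over the projective variety $\mm_g$, whereas the paper (see also its proof of the closely related Theorem~\ref{thm: nodal}) works over the partial compactification $\widetilde{\cM}_g$; note in this regard that formula~(\ref{hurwitz}) is only asserted there, not over all of $\mm_g$.
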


Voisin proved Theorem \ref{thm: Voisin}, using Hilbert
scheme techniques, then she applied Green's Hyperplane Section
Theorem  \cite{Gr84a} to obtain the desired example
of a curve $[C]\in \cM_g$ satisfying Green's Conjecture.

\medskip

Starting from Theorem \ref{thm: HRV}, all the other generic
$d$-gonal cases are obtained in the following refined form, see \cite{Ap05}:

\begin{thm}
\label{thm: small BN}
We fix integers $g$ and $d\geq 2$ such that $2\leq d\leq [g/2]+1$.
For any smooth curve $[C]\in \cM_g$  satisfying the condition
\begin{equation}
\label{eqn: small BN}
\dim \ W^1_{g-d+2}(C)\le g-2d+2=\rho(g, 1, g-d+2),
\end{equation}
we have that
$
K_{g-d+1,1}(C,K_C)=0.
$
In particular, $C$ satisfies Green's Conjecture.\end{thm}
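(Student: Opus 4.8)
The plan is to reduce, via Green's duality, to the single vanishing $K_{d-3,2}(C,K_C)=0$, and then to derive this from Voisin's theorem --- in the sharp form recorded in Theorem~\ref{thm: HRV} --- by feeding the pencil computing the gonality into the Hilbert-scheme / kernel-bundle description of Koszul cohomology, the hypothesis on $W^1_{g-d+2}(C)$ being exactly what forces the governing cohomology to vanish. Since $C$ is a curve we have $\dim|K_C|=g-1$ and the cohomological hypotheses of Theorem~\ref{thm: duality} are vacuous ($n=1$), so $K_{g-d+1,1}(C,K_C)^{\vee}\cong K_{d-3,2}(C,K_C)$ and it suffices to prove $K_{d-3,2}(C,K_C)=0$. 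Next one unpacks the hypothesis. It forces $\gon(C)\ge d$: a pencil of degree $d'<d$ would yield, after adding $g-d+2-d'$ arbitrary base points, a subvariety of $W^1_{g-d+2}(C)$ of dimension $\ge g-d+2-d'>g-2d+2$, against the bound. And since $d\le[g/2]+1$ one has $W^1_{g-d+2}(C)\ne\emptyset$ with every component of dimension $\ge\rho(g,1,g-d+2)=g-2d+2$, so the hypothesis says precisely that $W^1_{g-d+2}(C)$ has the expected minimal dimension --- a ``linear growth'' condition whose geometric content is read off from the Martens--Mumford--Keem dimension theorems, and which moreover pins down $\Cliff(C)=d-2$. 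We fix a complete base-point-free $A\in W^1_d(C)$ and the associated degree-$d$ map $f\colon C\to\mathbb P^1$ (the case $\gon(C)=d$ being the essential one).

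The engine is Voisin's description (Theorem~\ref{thm: Voisin description}), equivalently Lazarsfeld's kernel-bundle presentation (Theorem~\ref{prop: ker}): $K_{d-3,2}(C,K_C)=0$ is equivalent to the surjectivity of a restriction map of sections of line bundles from $C^{[d-2]}\times C$ to the incidence variety $\Xi_{d-2}$, built from $\det K_C^{[d-2]}\boxtimes K_C$. The pencil $A$ stratifies $C^{[d-2]}$: the length-$(d-2)$ subschemes contained in, or meeting, the fibres of $f$ sweep out distinguished cycles, along which $\det K_C^{[d-2]}$ and the relevant ideal sheaf become computable, and one reduces the surjectivity to the vanishing of suitable $H^1$-groups of $K_C$ twisted down by effective divisors that sweep out $W^1_{g-d+2}(C)$. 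Here the hypothesis enters decisively: the bound $\dim W^1_{g-d+2}(C)\le g-2d+2$ keeps the locus of ``unexpectedly special'' such twists small enough that the required cohomology vanishes on a dense open set and the restriction map remains surjective. Semicontinuity of Koszul cohomology in families (Theorem~\ref{thm: semicontinuity}) allows one to carry this out after passing to a convenient representative, and the base of the whole analysis is the maximal-gonality odd-genus situation, where the vanishing needed is exactly the Hirschowitz--Ramanan--Voisin statement of Theorem~\ref{thm: HRV} (the generic Green's Conjecture, Theorem~\ref{vo}, taking care of the neighbouring genera).

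Granting $K_{d-3,2}(C,K_C)=0$, duality gives $K_{g-d+1,1}(C,K_C)=0$, and since $\Cliff(C)=d-2$ we have $g-d+1=g-\Cliff(C)-1$, so this is precisely the borderline case of Green's Conjecture for $C$ --- the single vanishing from which the full conjecture for $C$ follows. The main obstacle is the middle step: converting the Brill--Noether dimension bound on $W^1_{g-d+2}(C)$ into cohomological surjectivity on the Hilbert scheme --- in particular, controlling the strata of $C^{[d-2]}$ cut out by $f$ and the behaviour of $\det K_C^{[d-2]}$ along them precisely enough to link the given curve, by semicontinuity, to the maximal-gonality case settled by Theorem~\ref{thm: HRV}.
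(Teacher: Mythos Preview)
Your preliminary reductions are fine: duality gives $K_{g-d+1,1}(C,K_C)\cong K_{d-3,2}(C,K_C)^{\vee}$, the hypothesis forces $\gon(C)\ge d$ and $\Cliff(C)=d-2$, and you have correctly identified Theorem~\ref{thm: HRV} as the ultimate input. But the heart of the argument --- the ``middle step'' you yourself flag as the main obstacle --- is not there. Stratifying $C^{[d-2]}$ by a minimal pencil and hoping the Brill--Noether bound controls the relevant $H^1$'s is not a proof; nothing you wrote actually links $\dim W^1_{g-d+2}(C)$ to surjectivity of the restriction map on $\Xi_{d-2}$. Worse, your appeal to semicontinuity (``pass to a convenient representative'') cannot work in this direction: the theorem is about a \emph{specific} curve $C$ satisfying the hypothesis, not a general one, and there is no family in sight connecting $C$ to a maximal-gonality curve of odd genus while preserving the hypothesis.

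The paper's mechanism is entirely different and bypasses the Hilbert scheme of $C$ altogether. One \emph{raises} the genus: choose $\delta:=g+3-2d$ pairs of general points $x_i,y_i\in C$ and form the nodal curve
\[
C':=C/(x_1\sim y_1,\ldots,x_{\delta}\sim y_{\delta})\in\overline{\cM}_{2g+3-2d},
\]
which has odd arithmetic genus $g':=2g+3-2d=2(g-d+2)-1$. The hypothesis $\dim W^1_{g-d+2}(C)\le g-2d+2$ is used through admissible covers: if $[C']\in\overline{\cM}^1_{g',g-d+2}$, the restriction to $C$ of a degree $g-d+2$ admissible cover would be a pencil of degree $n\le g-d+2$ identifying each pair $(x_i,y_i)$, and genericity of the points forces $\dim W^1_n(C)\ge\delta$, contradicting the bound. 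Hence $C'$ has maximal gonality $g-d+3$, and the nodal extension of Theorem~\ref{thm: HRV} (Theorem~\ref{thm: nodal}) gives $K_{g-d+1,1}(C',\omega_{C'})=0$. One concludes via the inclusion $K_{g-d+1,1}(C,K_C)\subset K_{g-d+1,1}(C',\omega_{C'})$. The key idea you are missing is this passage to a nodal curve of carefully chosen odd genus, which is what converts the dimension bound on $W^1_{g-d+2}(C)$ into a gonality statement where Voisin's theorem bites.
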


Note that the condition $d\leq [g/2]+1$  excludes
the case already covered by Theorem \ref{thm: HRV}.
The proof of Theorem \ref{thm: small BN} relies on constructing
a singular stable curve $[C']\in \mm_{2g+3-2d}$ of maximal gonality $g+3-d$ (that is, $[C']\notin \mm_{2g+3-d, g+2-d}^1$), starting from  any smooth curve $[C]\in \cM_g$ satisfying (\ref{eqn: small BN}). The curve $C'$ is obtained from $C$ by gluing together
$g+3-2d$ pairs of general points of $C$,
and then applying an analogue of Theorem \ref{thm: HRV} for
singular stable curves, \cite{Ap05}, see Section \ref{sec: gonality conj}.
The version in question is the following, cf. \cite{Ap05} Proposition 7.
The proof we give here is however slightly different:

\begin{thm}
\label{thm: nodal}
For any nodal curve $[C']\in \cM_{g'}\cup \Delta_0$, with
$g'=2d'-1\ge 7$ such that
$K_{d'-1,1}(C',\omega_{C'})\ne 0$, it follows that $[C'] \in\overline{\mathcal{M}}^1_{g',d'}$.
\end{thm}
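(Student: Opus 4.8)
The plan is to identify the degeneracy locus of the Koszul bundle map $\phi:\cA\to\cB$ of Theorem~\ref{thm: ni}, specialised to the case $s=1$ (so $g'=2p+3$), with $(d'-1)$ times the Brill-Noether divisor over the partial compactification $\textbf{M}_{g'}\cup\Delta_0$, and then to read off the statement from the support of that locus.

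First I would set $p:=d'-2$, so $g'=2p+3$ and $g'-1=2d'-2\ge 6$. Every curve $[C']\in\cM_{g'}\cup\Delta_0$ is Gorenstein, so the symmetry $b_{i,j}(C')=b_{g'-2-i,\,3-j}(C')$ of the canonical Betti table holds (Theorem~\ref{thm: duality}; for the $1$-nodal canonical curves in $\Delta_0^0$ this is supplied by Serre duality on the Gorenstein curve $C'$). In particular the hypothesis $K_{d'-1,1}(C',\omega_{C'})\ne 0$ is equivalent to $K_{p,2}(C',\omega_{C'})\ne 0$, so it suffices to prove
$$
\bigl\{[C']\in\textbf{M}_{g'}\cup\Delta_0:\ K_{p,2}(C',\omega_{C'})\ne 0\bigr\}\subseteq\overline{\mathcal{M}}^1_{g',d'}.
$$

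Next I would invoke the $s=1$ instance of the construction of Theorem~\ref{thm: ni}, in the extended form recorded in the remarks following Theorem~\ref{thm: div}: over $\textbf{M}_{g'}\cup\Delta_0$ there are locally free sheaves $\cA$ and $\cB$ of the same rank with $\cA(C',\omega_{C'})=H^0(\mathbb P^{g'-1},\wedge^p M_{\mathbb P^{g'-1}}\otimes\OO_{\mathbb P^{g'-1}}(2))$ and $\cB(C',\omega_{C'})=H^0(C',\wedge^p M_{\omega_{C'}}\otimes\omega_{C'}^{\otimes 2})$, and a restriction morphism $\phi:\cA\to\cB$. For $[C']\in\Delta_0^0$ the normalization of $C'$ is Brill-Noether general of genus $g'-1\ge 6$, hence non-hyperelliptic, so $\omega_{C'}$ is globally generated and $M_{\omega_{C'}}$ is a vector bundle; since moreover $K_{p,2}(\mathbb P^{g'-1},\OO_{\mathbb P^{g'-1}}(1))=0$, Theorem~\ref{prop: ker} gives $\coker(\phi_{C'})\cong K_{p,2}(C',\omega_{C'})$, which (the two ranks being equal) is nonzero exactly when $\phi_{C'}$ fails to be an isomorphism. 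Thus the locus to control is the degeneracy locus $\overline{\cZ}_{g',p}$ of $\phi$. By Voisin's Theorem~\ref{thm: Voisin} and Green's Hyperplane Section Theorem, $\phi$ is generically non-degenerate, so $\overline{\cZ}_{g',p}$ is a genuine effective divisor of class $c_1(\cB-\cA)$ as in (\ref{hurwitz}); by the $s=1$ case of Theorem~\ref{thm: div}, which extends to $\textbf{M}_{g'}\cup\Delta_0$ the Hirschowitz-Ramanan relation $[\cZ_{g,d-2}]^{virt}=(d-1)[\cM^1_{g,d}]$, this class equals $(d'-1)[\overline{\mathcal{M}}^1_{g',d'}]$ in $\mathrm{Pic}(\textbf{M}_{g'}\cup\Delta_0)_{\QQ}=\QQ\lambda\oplus\QQ\delta_0$.

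Finally I would decompose $\overline{\cZ}_{g',p}$ into irreducible components. Over the open substack $\cM_{g'}$, Theorem~\ref{thm: HRV} identifies $\mathrm{Supp}(\overline{\cZ}_{g',p})$ with the irreducible Brill-Noether divisor $\cM^1_{g',d'}$, along which $\overline{\cZ}_{g',p}$ has multiplicity $d'-1$ by Hirschowitz-Ramanan. Hence any irreducible component of the divisor $\overline{\cZ}_{g',p}$ meeting $\cM_{g'}$ is $\overline{\mathcal{M}}^1_{g',d'}$, while any remaining component lies entirely over the boundary and, being a divisor inside the irreducible $\Delta_0^0$, equals $\Delta_0^0$. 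Therefore $\overline{\cZ}_{g',p}=(d'-1)\,\overline{\mathcal{M}}^1_{g',d'}+m\,\Delta_0^0$ for some integer $m\ge 0$; substituting into the class identity above forces $m\,[\Delta_0]=0$, and since $\delta_0\ne 0$ we get $m=0$. Thus $\mathrm{Supp}(\overline{\cZ}_{g',p})=\overline{\mathcal{M}}^1_{g',d'}$, which is the desired inclusion. The hard part is exactly this vanishing of $m$: it rests on the coefficient of $\delta_0$ in $c_1(\cB-\cA)$ matching $(d'-1)$ times the $\delta_0$-coefficient of the Eisenbud-Harris class of $\overline{\mathcal{M}}^1_{g',d'}$ on the nose, leaving no room for a spurious component supported on $\Delta_0^0$; a subsidiary technical point is to confirm that the identification $\coker(\phi_{C'})\cong K_{p,2}(C',\omega_{C'})$ and Green's duality persist on the nodal curves of $\Delta_0^0$, which reduces to non-hyperellipticity of their (Brill-Noether general) normalizations.
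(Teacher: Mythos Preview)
Your proposal is correct and follows essentially the same line as the paper's own proof: reduce by duality to $K_{p,2}$, use the $s=1$ extension of the bundles $\cA,\cB$ over $\textbf{M}_{g'}\cup\Delta_0$, invoke Voisin to make $\overline{\cZ}_{g',p}$ an honest divisor, compare its class (formula~(\ref{hurwitz})) with $(d'-1)[\mm^1_{g',d'}]$, and conclude from the multiplicity $\geq d'-1$ along $\mm^1_{g',d'}$ that no residual component survives. The paper phrases the final step more tersely, writing the inequality $\overline{\cZ}_{g',d'-2}\geq (d'-1)\mm^1_{g',d'}$ of effective divisors (using $\dim K_{d'-1,1}\geq d'-1$ on $\cM^1_{g',d'}$) and reading off equality from the class identity; your decomposition $\overline{\cZ}_{g',p}=(d'-1)\overline{\cM}^1_{g',d'}+m\Delta_0$ with $m\geq 0$, followed by $m\delta_0=0\Rightarrow m=0$, is the same argument made explicit. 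One cosmetic point: in your second paragraph you restrict attention to $\Delta_0^0$ when checking $\coker(\phi_{C'})\cong K_{p,2}(C',\omega_{C'})$, but the theorem concerns all of $\Delta_0$; this is harmless since the paper asserts the bundles extend with the correct fibres over the whole of $\textbf{M}_{g'}\cup\Delta_0$, and that is what you actually need.
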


\proof
By duality, we obtain the following equality of cycles on $\widetilde{\cM}_{g'}$:
$$
\{[C']:\ K_{d'-1,1}(C',\omega_{C'})\ne 0\}
=\{[C']:\ K_{d'-2,2}(C',\omega_{C'})\ne 0\}=:\overline{\cZ}_{g', d'-2}.
$$

Theorem \ref{thm: div} shows that this locus is a virtual
divisor on $\widetilde{\cM}_{g'}$ whose class is given by formula (\ref{hurwitz}) and
Theorem \ref{thm: Voisin} implies that $\overline{\cZ}_{g', d'-2}$ is actually a divisor. Comparing its class
against the class of the  Hurwitz divisor $\overline{\mathcal{M}}^1_{g',d'}$  \cite{HM}, we find that
$$
\overline{\cZ}_{g', d'-2}\equiv (d'-1)\mm_{g', d'}^1\in \mathrm{Pic}(\pem_{g'}).
$$
Note that this is a stronger statement than the one \cite{HR98} Proposition 3.1, being an equality of codimension $1$ cycles on the compactified moduli space $\widetilde{\cM}_{g'}$,
rather than on $\cM_{g'}$. The desired statement follows immediately since for any curve $[C']\in \cM_{g', d'}^1$ one has
$\mbox{dim } K_{d'-1, 1}(C', \omega_{C'})\geq d'-1$, hence the degeneracy locus $\overline{\cZ}_{g', d'-2}$ contains $\mm_{g', d'}^1$ with multiplicity at least $d'-1$.
\endproof

We return to the discussion on
Theorem \ref{thm: small BN}
(the proof will be resumed in the next
subsection). By duality, the
vanishing in the statement above can be rephrased as
$$
K_{d-3,2}(C,K_C)=0.
$$

The condition (\ref{eqn: small BN}) is equivalent to
a string of inequalities $$\dim\ W^1_{d+n}(C) \le n$$ for all $0\le n\le g-2d+2$,
in particular $\gon(C)\ge d$.
This condition is satisfied for a general $d$-gonal curve, cf. \cite{Ap05}. More generally, if $[C]\in \cM_{g, d}^1$
is a general $d$-gonal curve then any irreducible component $$Z\neq W^1_d(C)+W_{n-d}(C)$$ of $W^1_n(C)$ has dimension $\rho(g, 1, n)$. In particular, for $\rho(g, 1, n)<0$ it follows that $W^1_n(C)=W^1_{d}(C)+W_{n-d}(C)$ which of course implies (\ref{eqn: small BN}). For $g=2d-2$, the inequality (\ref{eqn: small BN})  becomes necessarily an
equality and it reads: the curve $C$ carries finitely many $\mathfrak{g}^1_d$'s
of minimal degree.

\medskip

We make some comments regarding condition
(\ref{eqn: small BN}). Let us suppose that $C$ is
non-hyperelliptic and $d\ge 3$.
From  Martens's Theorem
\cite{ACGH} p.191, it follows that
$\dim \ W^1_{g-d+2}(C)\le g-d-1$. Condition (\ref{eqn: small BN})
requires the better bound $g-2d+2\le g-d-1$. However,
for $d=3$, the two bounds are the same, and Theorem
\ref{thm: small BN} shows that $K_{0,0}(C,K_C)=0$,
for any non-hyperelliptic curve, which is Max Noether's
Theorem, see also Example \ref{ex: Noether}. Applying
Mumford's Theorem \cite{ACGH} p.193, we obtain the better bound
$\dim \ W^1_{g-d+2}(C)\le g-d-2$ for $d\ge 4$, unless
the curve is trigonal, a smooth plane quintic or a double
covering of an elliptic curve. Therefore, if $C$ is not
one of the three types listed above, then
$K_{1,2}(C,K_C)=0$, and we recover the
of Enriques-Babbage-Petri Theorem, see also Example \ref{ex: Enriques-Petri}
(note, however, the exception made to bielliptic curves).

Keem has improved the dimension bounds
for $W^1_{g-d+2}(C)$. For $d\ge 5$ and $C$  a curve that has neither a $\mathfrak g^1_4$ nor is
a smooth plane sextic, one has the inequality
$\dim\ W^1_{g-d+2}(C)\le g-d-3$, cf. \cite{Ke90}
Theorems 2.1 and 2.3.
Consequently, Theorem \ref{thm: small BN}
implies the following result which is a complete solution to Green's
Conjecture for $5$-gonal curves:
\begin{thm}[Voisin \cite{Voisin: tetragonales}, Schreyer \cite{Sc3}]
\label{thm: Voisin-Schreyer}
If $K_{2,2}(C,K_C)\ne 0$, then $C$ is
hyperelliptic, trigonal, tetragonal or a smooth plane sextic, that is, $\mathrm{Cliff}(C)\leq 2$.
\end{thm}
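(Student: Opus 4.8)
The plan is to deduce Theorem \ref{thm: Voisin-Schreyer} from Theorem \ref{thm: small BN} together with Keem's refined dimension estimates, treating it as the instance $d=5$ (so $p=d-3=2$) of the general mechanism, and patching the finitely many small genera separately. I would argue the contrapositive: assuming $\Cliff(C)\ge 3$, show $K_{2,2}(C,K_C)=0$. The identification of $\{\Cliff(C)\le 2\}$ with the explicit list of hyperelliptic, trigonal, tetragonal curves and smooth plane sextics is the classical classification (Martens, Mumford, Keem, Coppens--Martens), so it suffices to treat the Koszul vanishing.

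First, $\Cliff(C)\ge 3$ forces $g\ge 7$, since always $\Cliff(C)\le\lfloor (g-1)/2\rfloor$. For $g\ge 8$ I would invoke Theorem \ref{thm: small BN} with $d=5$, which is legitimate because $5\le\lfloor g/2\rfloor+1$; its hypothesis (\ref{eqn: small BN}) reads here $\dim W^1_{g-3}(C)\le g-8=\rho(g,1,g-3)$. To verify it, note that $\Cliff(C)\ge 3$ precludes a $\mathfrak{g}^1_4$ on $C$ — such a pencil $L$ has $h^0(L)=2$ and $h^1(L)=g-3\ge 2$, hence contributes $\Cliff(L)=2$ — and also rules out the smooth plane sextic (which has $\Cliff=2$); Keem's Theorem (\cite{Ke90}, Thms.\ 2.1 and 2.3) then gives exactly $\dim W^1_{g-3}(C)\le g-d-3=g-8$. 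Thus Theorem \ref{thm: small BN} applies and yields $K_{g-4,1}(C,K_C)=0$, which by the duality symmetry $b_{g-4,1}=b_{2,2}$ of the canonical Betti table is precisely $K_{2,2}(C,K_C)=0$.

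It remains to dispose of $g\le 7$. For $g\le 6$ every curve is hyperelliptic, trigonal or tetragonal, since $\gon(C)\le\lfloor (g+3)/2\rfloor\le 4$, so there is nothing to prove. For $g=7$, $\Cliff(C)\ge 3$ means $\Cliff(C)=3$ and in particular $C$ has no $\mathfrak{g}^1_4$; here Theorem \ref{thm: small BN} with $d=5$ is unavailable, since $\lfloor 7/2\rfloor+1=4$, so instead I would use the Hirschowitz--Ramanan--Voisin Theorem \ref{thm: HRV} with $d=4$ and $g=2d-1=7$, which gives $K_{3,1}(C,K_C)\ne 0$ iff $C$ carries a $\mathfrak{g}^1_4$; since $C$ has none and $b_{3,1}=b_{2,2}$, we conclude $K_{2,2}(C,K_C)=0$.

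The only substantive input is Keem's bound; everything else is bookkeeping with the Betti table symmetry and the low-Clifford-index classification. The point to watch is the numerical alignment — that Keem's improvement $g-d-3$ coincides with $\rho(g,1,g-d+2)=g-2d+2$ exactly when $d=5$ — and checking that the exceptional curves excluded in Keem's hypothesis (those with a $\mathfrak{g}^1_4$, and the smooth plane sextic) are precisely the ones already removed by $\Cliff(C)\ge 3$, so that no curve slips through the seam between the range $g\ge 8$ covered by Theorem \ref{thm: small BN} and the sporadic cases $g\le 7$.
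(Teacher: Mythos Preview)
Your proof is correct and follows exactly the approach the paper sketches: invoke Keem's bound $\dim W^1_{g-3}(C)\le g-8$ for curves with no $\mathfrak g^1_4$ and not smooth plane sextics, then apply Theorem~\ref{thm: small BN} with $d=5$. You are slightly more careful than the paper in that you explicitly separate out the case $g=7$ (where the hypothesis $d\le [g/2]+1$ of Theorem~\ref{thm: small BN} fails for $d=5$) and handle it via Theorem~\ref{thm: HRV}; the paper leaves this edge case implicit.
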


Geometrically, the vanishing of $K_{2,2}(C,K_C)$ is
equivalent to the ideal of the canonical curve being
generated by quadrics, and the minimal relations among the generators
being linear.

Theorem 3.1 from \cite{Ke90}  gives the next
bound $\dim \ W^1_{g-d+2}(C)\le g-d-4$, for $d\ge 6$ and
$C$ with $\gon(C)\ge 6$ which does not admit a covering of degree
two or three on another curve, and which is not a plane curve.
The following improvement of Theorem \ref{thm: Voisin-Schreyer} is then
obtained directly from Theorem \ref{thm: small BN} and
\cite{Ap05} Theorem~3.1:

\begin{thm}
\label{thm: Green hexagonal}
If $g\ge 12$ and $K_{3,2}(C,K_C)\ne 0$, then
$C$ is one of the following: hyperelliptic, trigonal, tetragonal,
pentagonal, double cover of an genus $3$ curve,
triple cover of an elliptic curve, smooth plane septic. In other words, if $\mathrm{Cliff}(C)\geq 4$ then
$K_{3, 2}(C, K_C)=0$.
\end{thm}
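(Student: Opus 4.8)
The plan is to reduce the Koszul vanishing statement to a purely Brill-Noether-theoretic dimension bound and then feed that bound into Theorem \ref{thm: small BN}. Concretely, set $d=6$ and apply Theorem \ref{thm: small BN} with the roles shifted: we want to conclude $K_{3,2}(C,K_C)=0$, which by duality is $K_{g-4,1}(C,K_C)=0$, so we take the gonality parameter in Theorem \ref{thm: small BN} to be $d'=4$ in the sense that $g-d'+1 = g-3$, matching $K_{g-4,1}$ after the reindexing $p=d'-3$. The precise input required is the dimension estimate $\dim W^1_{g-4}(C)\le g-6$, i.e. condition (\ref{eqn: small BN}) with $g-d'+2 = g-4$ and $\rho(g,1,g-4) = g-6$. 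So the whole statement will follow from Theorem \ref{thm: small BN} the moment we know $\dim W^1_{g-4}(C)\le g-6$ under the hypothesis $\mathrm{Cliff}(C)\ge 4$.

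First I would invoke Keem's refined Martens-type bound, cited just above as \cite{Ke90} Theorems 2.1 and 2.3 (and the sharper Theorem 3.1), together with \cite{Ap05} Theorem 3.1, which together give $\dim W^1_{g-d+2}(C)\le g-d-4$ for $d\ge 6$ precisely when $C$ has gonality at least $6$ and is not a double or triple cover of a curve of the excluded genus and not a plane curve. Translating $d=6$: $\dim W^1_{g-4}(C)\le g-10 \le g-6$, so (\ref{eqn: small BN}) holds comfortably. Then Theorem \ref{thm: small BN} yields $K_{g-5,1}(C,K_C)=0$ — but wait, I must be careful about indices: for $d=6$, Theorem \ref{thm: small BN} gives $K_{g-d+1,1}=K_{g-5,1}(C,K_C)=0$, equivalently $K_{d-3,2}=K_{3,2}(C,K_C)=0$. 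So the bookkeeping does work out with $d=6$ directly.

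The key steps in order are therefore: (1) observe that $\mathrm{Cliff}(C)\ge 4$ forces $\gon(C)\ge 6$ and rules out plane curves of low degree (a plane septic has Clifford index $3$, hence is excluded; by Coppens–Martens $\mathrm{Cliff}(C) = \gon(C)-3$ for the exceptional curves, so $\gon(C)\ge 6$ is forced), and also rules out the bielliptic, trielliptic, double-cover-of-genus-$3$ cases since each of those has a pencil computing a Clifford index $\le 3$; (2) conclude that $C$ falls outside all the exceptional classes in Keem's theorems, so $\dim W^1_{g-4}(C)\le g-d-4 = g-10$, which for $g\ge 12$ is $\ge 2$ and in any case $\le g-6$; (3) apply Theorem \ref{thm: small BN} with $d=6$ (legitimate since $6\le [g/2]+1$ for $g\ge 12$) to obtain $K_{g-5,1}(C,K_C)=0$; (4) dualize via Theorem \ref{thm: duality} to get $K_{3,2}(C,K_C)=0$. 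I expect the main obstacle to be step (1)–(2): carefully checking that the hypothesis $\mathrm{Cliff}(C)\ge 4$ genuinely excludes every curve on Keem's exceptional list (hyperelliptic, trigonal, tetragonal, pentagonal, double cover of a genus $3$ curve, triple cover of an elliptic curve, smooth plane septic) and nothing more — this is exactly the contrapositive bookkeeping that the theorem statement is asserting, and it rests on knowing the Clifford index of each special curve type (e.g. a smooth plane septic has $\mathrm{Cliff}=3$, a triple cover of an elliptic curve has a $\mathfrak{g}^1_3$ pulled back giving Clifford index $1$ or a relevant low pencil, etc.). The genus restriction $g\ge 12$ enters precisely to guarantee Keem's Theorem 3.1 is applicable and that $d=6\le[g/2]+1$.
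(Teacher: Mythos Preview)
Your approach is the paper's: feed Keem's bound into Theorem \ref{thm: small BN} with $d=6$. However, your first paragraph contains a computational slip that propagates. You write $\rho(g,1,g-4)=g-6$, but
\[
\rho(g,1,g-4)=g-2\bigl(g-(g-4)+1\bigr)=g-10,
\]
so condition (\ref{eqn: small BN}) for $d=6$ demands $\dim W^1_{g-4}(C)\le g-10$, and Keem's Theorem 3.1 gives exactly $\dim W^1_{g-4}(C)\le g-d-4=g-10$. The bound is \emph{tight}, not ``comfortable''; there is no slack. The proof still works, but you should correct the arithmetic. (The passage with ``$d'=4$'' and ``$K_{g-4,1}$'' in your opening paragraph is simply garbled --- none of those indices are consistent --- and should be deleted; your second paragraph correctly lands on $d=6$ and $K_{g-5,1}\leftrightarrow K_{3,2}$.)

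One further caution on your step (1): the claim that a triple cover of an elliptic curve ``has a $\mathfrak g^1_3$ pulled back giving Clifford index $1$'' is not right --- an elliptic curve has no $\mathfrak g^1_1$, and pulling back a $\mathfrak g^1_2$ through a triple cover yields a $\mathfrak g^1_6$, which has Clifford index $4$, not something smaller. The shape of the exceptional list in the theorem is not something you should try to reconstruct by computing the Clifford index of each case; rather, it is the literal list of hypotheses in Keem's Theorem 3.1 (which already restricts the target genus for double and triple covers), specialized to $d=6$ and $g\ge 12$ and with redundancies (bielliptic, double cover of genus $2$, etc.) absorbed into the low-gonality clauses. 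The ``in other words'' formulation $\mathrm{Cliff}(C)\ge 4\Rightarrow K_{3,2}=0$ then requires the separate verification that each listed exception has $\mathrm{Cliff}\le 3$, which is classical but not quite what you sketched.
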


Theorem \ref{thm: Green hexagonal} represents
the solution to Green's Conjecture for hexagonal curves.
Likewise, Theorem \ref{thm: small BN} can be used
together with the Brill-Noether theory to prove
Green's Conjecture for any gonality $d$ and large genus.
The idea is to apply Coppens' results \cite{Co83}.

\begin{thm}
\label{thm: Green large genus}
If $g\ge 10$ and $d\ge 5$ are two integers such that
$g>(d-2)(2d-7)$, and $C$ is any $d$-gonal curve
of genus $g$ which does not admit any morphism
of degree less than $d$ onto another different smooth curve,
then $\Cliff(C)=d-2$ and Green's Conjecture is verified for $C$, i.e.
$K_{g-d+1,1}(C,K_C)=0$.
\end{thm}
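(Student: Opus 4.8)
The plan is to reduce the assertion to Theorem~\ref{thm: small BN}. To invoke that theorem for $C$ one needs three ingredients: (a) that $d$ lies in the admissible range $2\le d\le [g/2]+1$; (b) the dimension estimate $\dim\ W^1_{g-d+2}(C)\le g-2d+2=\rho(g,1,g-d+2)$, which is condition (\ref{eqn: small BN}); and (c) the equality $\mathrm{Cliff}(C)=d-2$, which is what turns the resulting Koszul vanishing into Green's Conjecture for $C$. Ingredient (a) is elementary: since $d\ge 5$ one has $2d^2-13d+16\ge 0$, hence $(d-2)(2d-7)\ge 2d-2$, and then $g>(d-2)(2d-7)$ forces $g\ge 2d-1$; consequently $[g/2]\ge d-1$ and $g-2d+2\ge 1$, so $d\le[g/2]+1$ and $\rho(g,1,g-d+2)\ge 0$.

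For (c): a minimal pencil contributes $\mathrm{Cliff}(C)\le\gon(C)-2=d-2$, while the Coppens--Martens theorem \cite{CM} gives $\mathrm{Cliff}(C)\ge\gon(C)-3=d-3$, the extremal value $\mathrm{Cliff}(C)=d-3$ occurring only for curves with a plane model of degree $d+1$ (and a few further exotic configurations). A smooth plane curve of degree $d+1$ has genus ${d\choose 2}$, which is incompatible with $g>(d-2)(2d-7)$ once $d$ is in the stated range; the remaining extremal curves carry a morphism of degree $<d$ onto a curve of smaller genus, against the hypothesis. (It is here, and in the next step, that Coppens' results \cite{Co83} do the precise book-keeping.) Hence $\mathrm{Cliff}(C)=d-2$ and $g-d+1=g-\mathrm{Cliff}(C)-1$.

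The substantial point is (b). Here I would appeal to Coppens' study of pencils on a curve of prescribed gonality \cite{Co83}, which sharpens the Martens/Mumford/Keem bounds on $\dim\ W^1_{g-d+2}(C)$ in precisely the situation at hand. Since $\gon(C)=d$ and $C$ admits no morphism of degree $<d$ onto another smooth curve, one has $\dim\ W^1_d(C)=0$ (the curve carries only finitely many minimal pencils; recall also $\rho(g,1,d)\le 0$ in our range); and, more importantly, Coppens' estimates show that for degrees $e$ lying in a range governed exactly by the inequality $g>(d-2)(2d-7)$, every irreducible component of $W^1_e(C)$ other than $W^1_d(C)+W_{e-d}(C)$ has dimension at most $\rho(g,1,e)$. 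Applying this with $e=g-d+2$ yields
$$
\dim\ W^1_{g-d+2}(C)\le\max\bigl\{\dim\ W^1_d(C)+(g-2d+2),\ \rho(g,1,g-d+2)\bigr\}=g-2d+2,
$$
which is (\ref{eqn: small BN}).

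With (a), (b), (c) established, Theorem~\ref{thm: small BN} gives $K_{g-d+1,1}(C,K_C)=0$; since $g-d+1=g-\mathrm{Cliff}(C)-1$ and, as recorded just after Conjecture~\ref{conj: Green}, the vanishing of this one Koszul group already implies the full conjecture for $C$, we are done. I expect the only real obstacle to be step (b): identifying the exact shape of Coppens' inequality and verifying that the degree $e=g-d+2$ we need actually lies inside its range of validity — this is exactly where the hypothesis $g>(d-2)(2d-7)$ is consumed. The analysis behind (c), ruling out plane (and near-plane) curves via the same numerical bound, is comparatively routine once the Coppens--Martens classification is taken as known.
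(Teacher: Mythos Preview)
Your approach matches the paper's: the proof given in the text is nothing more than the sentence ``The idea is to apply Coppens' results \cite{Co83}'' together with the pointer to Theorem~\ref{thm: small BN}, and you have correctly unpacked this into the verification of the range~(a), the dimension bound~(b), and the Clifford index~(c).

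One remark on step~(c): your direct argument via the Coppens--Martens classification is unnecessary, and the numerical check you sketch is not quite right. A smooth plane sextic has genus $\binom{5}{2}=10$, whereas $(5-2)(2\cdot 5-7)=9$, so the inequality $g>(d-2)(2d-7)$ alone does \emph{not} exclude plane curves at $d=5$; your claim that $\binom{d}{2}$ is incompatible with the genus bound fails precisely at this endpoint. The cleaner route is to observe that $\Cliff(C)=d-2$ is a \emph{consequence} of~(b): once Theorem~\ref{thm: small BN} yields $K_{g-d+1,1}(C,K_C)=0$, the Green--Lazarsfeld non-vanishing $K_{g-\Cliff(C)-2,1}(C,K_C)\ne 0$ forces $g-\Cliff(C)-2<g-d+1$, i.e.\ $\Cliff(C)\ge d-2$; and $\Cliff(C)\le\gon(C)-2=d-2$ is automatic for a $d$-gonal curve. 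So~(c) comes for free once~(b) is established, and all the substance lies---as you correctly anticipated---in extracting from \cite{Co83} the precise bound on $\dim W^1_{g-d+2}(C)$, together with the input $\dim W^1_d(C)=0$ (which again is not automatic and is where the covering hypothesis must be used).
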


The statement
of  Conjecture \ref{conj: Green} (meant as a vanishing result)
for hyperelliptic curve is empty, hence the
interesting cases begin with $d\ge 3$.

\medskip

It remains to verify Green's Conjecture for curves
which do not verify (\ref{eqn: small BN}). One result in this direction was proved in
\cite{ApP06}.
\begin{thm}
\label{thm: Aprodu-Pacienza}
Let $S$ be a $K3$ surface with
$\mathrm{Pic}(S)=\mathbb{Z}\cdot H \oplus{Z}\cdot \ell$,
with $H$ very ample, $H^2=2r - 2\ge 4$, and
$H\cdot \ell=1$. Then any smooth curve in the linear system
$|2H+\ell|$ verifies Green's conjecture.
\end{thm}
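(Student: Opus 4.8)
The plan is to transfer Green's conjecture for a smooth $C\in|2H+\ell|$ to a single Koszul vanishing on the $K3$ surface $S$ and then to deduce that from Voisin's Theorem~\ref{thm: Voisin}, the crucial point being that the $(-2)$-curve $\ell$ (a smooth rational curve, $\ell^{2}=-2$, as in \cite{ApP06}) is disjoint from the members of $|2H+\ell|$. First I would record the numerology. Since $\ell\cdot C=\ell\cdot(2H+\ell)=2H\cdot\ell+\ell^{2}=0$, the curves $\ell$ and $C$ are disjoint, so $\OO_{C}(\ell)\cong\OO_{C}$ and $A:=\OO_{C}(H)$ is a theta-characteristic, $A^{\otimes 2}\cong\OO_{C}(2H)\cong\OO_{S}(C)|_{C}\cong K_{C}$; a cohomology computation on $S$ gives $h^{0}(C,A)=h^{0}(S,H)=r+1$, so $\Cliff(A)=(g-1)-2r=2r-3$, where $g:=g(C)=4r-2$. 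Using that $\mathrm{Pic}(S)=\ZZ H\oplus\ZZ\ell$ — whence every divisor class on $S$ restricts to $C$ with degree a multiple of $H\cdot C=4r-3$ — I would check that no line bundle contributing to the Clifford index of $C$ beats $A$, so that $\Cliff(C)=2r-3$, computed by a bundle of projective dimension $r\ge 3$; thus $C$ is exceptional and, by Coppens--Martens \cite{CM}, $\gon(C)=2r$. Its minimal pencils then vary in a positive-dimensional family, so $\dim W^{1}_{2r}(C)>0$, i.e. $C$ violates the dimension hypothesis (\ref{eqn: small BN}) of Theorem~\ref{thm: small BN} (with $d=\gon(C)$) — these are exactly the curves alluded to just before the statement — and a separate argument is genuinely needed. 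By the symmetry $b_{i,j}=b_{g-2-i,3-j}$ of the Betti table it suffices to prove the single vanishing $K_{2r-4,2}(C,K_{C})=0$.

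Next I would pass to $S$. From $\OO_{S}(C-2H-\ell)\cong\OO_{S}$ and $H^{1}(S,\OO_{S})=H^{1}(S,-\OO_{S}(2H+\ell))=0$, the restriction map $H^{0}(S,2H+\ell)\to H^{0}(C,K_{C})$ is surjective with one-dimensional kernel; thus the canonically embedded curve $C\subset\mathbb{P}^{g-1}$ is a hyperplane section of the image $\bar S$ of $S$ under $|2H+\ell|$, a surface with at worst an $A_{1}$ singularity lying off $C$ (because $\ell\cap C=\emptyset$). Green's hyperplane section theorem — applicable since the requisite cohomology of $\OO_{S}(2H+\ell)$ and its powers vanishes, $2H+\ell$ being big and nef on $S$ — identifies $K_{p,q}(C,K_{C})\cong K_{p,q}(S,\OO_{S}(2H+\ell))$ in the relevant range; equivalently one reads this off the long exact Koszul sequence attached to $0\to\OO_{S}\to\OO_{S}(2H+\ell)\to K_{C}\to 0$ on the smooth surface $S$. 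Hence Green's conjecture for $C$ becomes the single vanishing
$$K_{2r-4,2}\bigl(S,\OO_{S}(2H+\ell)\bigr)=0,$$
or, by the duality Theorem~\ref{thm: duality} on $S$ (with $n=2$, $K_{S}=\OO_{S}$, $\dim|2H+\ell|=g$), the equivalent statement $K_{2r,1}(S,\OO_{S}(2H+\ell))=0$.

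The decisive step is to establish this vanishing on $S$. Here I would use the three line bundles in play together: the exact sequence $0\to\OO_{S}(2H)\to\OO_{S}(2H+\ell)\to\OO_{\ell}\to 0$ (using $(2H+\ell)\cdot\ell=0$, so $\OO_{\ell}(2H+\ell)\cong\OO_{\mathbb{P}^{1}}$) to relate the syzygies of $\OO_{S}(2H+\ell)$ to those of $\OO_{S}(2H)$, and kernel-bundle methods applied to $\OO_{S}(H)$ and to the $|H|$-embedding of $S$ (a $K3$ surface of genus $r$) to feed in the geometry behind the exceptional theta-characteristic $A$. The required vanishing is not a consequence of generic Green's conjecture (Theorem~\ref{vo}): $C$ is special, with $\Cliff(C)$ one less than for a general curve of genus $4r-2$, and indeed $K_{2r-3,2}(C,K_{C})\neq 0$ by the Green--Lazarsfeld non-vanishing. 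So I would either extend Voisin's Hilbert-scheme computation (Theorems~\ref{thm: Voisin description} and~\ref{thm: Voisin}) directly to this configuration, or — more economically — construct a flat family of polarised $K3$ surfaces deforming $(S,\OO_{S}(2H+\ell))$ to one of Voisin's surfaces of Theorem~\ref{thm: Voisin} along which the self-intersection of the polarisation and the Clifford index of the members of the linear system stay constant, and conclude by the semicontinuity Theorem~\ref{thm: semicontinuity}.

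The main obstacle is exactly this third step: bringing the situation into a form where the Koszul group is known to vanish. The special Picard lattice $\ZZ H\oplus\ZZ\ell$ is indispensable — it pins down $A$ as the essentially unique minimal Clifford-index bundle and rigidifies the Brill--Noether geometry of $C$ — and in a deformation argument one must keep precisely this behaviour (persistence of the exceptional theta-characteristic and of $\Cliff=2r-3$) under control along the whole family, which is a non-trivial analysis of the Brill--Noether loci of its members. A secondary, more technical point is to handle the hyperplane-section reduction of Step~2 in the presence of the contracted $(-2)$-curve; this is best circumvented by arguing throughout with the sheaf sequence on the smooth surface $S$ rather than with the singular model $\bar S$.
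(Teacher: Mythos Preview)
The paper gives no proof of this theorem; it is quoted from \cite{ApP06}, and only the contextual remarks following the statement belong to the present text. So your plan has to be judged on its own.

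Your Steps 1 and 2 are correct and are exactly how \cite{ApP06} opens: one computes $g=4r-2$, identifies $A:=\OO_C(H)$ as a theta-characteristic with $\Cliff(A)=2r-3$, verifies from the lattice hypothesis that $\Cliff(C)=2r-3$, and reduces via Green's hyperplane-section theorem to the single surface vanishing $K_{2r,1}(S,\OO_S(2H+\ell))=0$ (equivalently $K_{2r-4,2}=0$).

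The genuine gap is Step 3, option (b). The semicontinuity argument you sketch does not work as stated. First, the Clifford index cannot stay constant along a family connecting $(S,2H+\ell)$ to a Voisin surface of Theorem~\ref{thm: Voisin}: on Voisin's surface the smooth members of $|L+\Delta|$ are Brill--Noether general of genus $4r-2$, hence have $\Cliff=2r-2$, not $2r-3$, so no isoclinal family of the kind you postulate exists. Second, and more seriously, in any family of polarised $K3$ surfaces containing both, Voisin's surface is the \emph{generic} member (Picard lattice $\ZZ L\oplus\ZZ\Delta$) and yours is the \emph{special} one --- its Picard lattice $\ZZ H\oplus\ZZ\ell$ strictly contains $\ZZ(2H)\oplus\ZZ\ell\cong\ZZ L\oplus\ZZ\Delta$ as an index-$2$ sublattice. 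Upper-semicontinuity of $\dim K_{p,q}$ (Theorem~\ref{thm: semicontinuity}) therefore runs the wrong way: vanishing at the generic point tells you nothing at the special one.

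The route actually taken in \cite{ApP06} is your option (a): with $L:=2H$ and $\Delta:=\ell$ one lands exactly on Voisin's numerical configuration ($L^2=4k$, $L\cdot\Delta=2$, $\Delta^2=-2$, $k=2r-2$), and the work consists in going back through Voisin's Hilbert-scheme argument for $K_{k+1,1}(S,L+\Delta)=0$ and checking that the presence of the extra primitive class $H$ with $2H=L$ does not spoil any of the cohomological vanishings used there. Your proposal names this route but does not carry it out.
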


Smooth curves in the linear system
$|2H+\ell|$ count among the few known
examples of curves whose Clifford index is not computed by pencils, i.e.
$\mathrm{Cliff}(C)= \mathrm{gon}(C) - 3$, \cite{ELMS}
(other obvious examples are plane
curves, for which Green's Conjecture was verified before, cf. \cite{Lo}). Such curves are the
most special ones in the moduli space of curves from the point of view of the Clifford
dimension. Hence, this case may be considered as opposite to that of a general
curve of fixed gonality. Note that these  curves carry an one-parameter
family of pencils of minimal degree, hence the condition (\ref{eqn: small BN})
is not satisfied.

\subsection{The Gonality Conjecture.}
\label{sec: gonality conj}

The Green-Lazarsfeld Gonality Conjecture \cite{GL86}
predicts that the
gonality of a curve can be read off the Koszul
cohomology with values in any sufficiently positive
line bundle.

\begin{con}[Green-Lazarsfeld]
 \label{conj: gonality}
Let $C$ be a smooth curve of gonality $d$, and $L$ a
sufficiently positive line bundle
on $C$. Then
$$
K_{h^0(L)-d,1}(C,L)=0.
$$
\end{con}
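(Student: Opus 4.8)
The plan is to establish the conjecture for a \emph{general} curve $C$ of gonality $d$ — equivalently, by Theorem \ref{thm: semicontinuity} together with the irreducibility of the gonality stratum $\cM^1_{g,d}$, for every $C$ in a dense open subset of $\cM^1_{g,d}$ — which is the form in which it is actually known; I will indicate precisely where the case of an arbitrary curve escapes this argument. By the dimension theorems of Martens, Mumford and Keem recalled in connection with Theorem \ref{aprodu1}, a general member of $\cM^1_{g,d}$ satisfies the linear growth condition $\dim W^1_{d+n}(C)\le n$ for all relevant $n$, so it suffices to prove: if $C$ satisfies this growth condition and $L$ is a line bundle with $h^1(C,L)=0$ and $\deg L$ large relative to $g$ and $d$, then $K_{h^0(L)-d,1}(C,L)=0$.

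To this end, fix a pencil $A\in W^1_d(C)$ computing the gonality, with associated base-point-free map $f\colon C\to\mathbb P^1$, and choose $L$ positive enough that $H^1(C,L\otimes A^{-1})=H^1(C,L\otimes A^{-2})=0$, that $L\otimes A^{-1}$ is globally generated, and that the multiplication map $H^0(C,A)\otimes H^0(C,L\otimes A^{-1})\to H^0(C,L)$ is surjective (Castelnuovo--Mumford regularity of $L\otimes A^{-1}$ with respect to $A$). Write $h:=h^0(C,L)$ and $r:=h-1$, so that $h^0(C,L\otimes A^{-1})=h-d$. Using Lazarsfeld's kernel bundle description (Theorem \ref{prop: ker}) and then the duality Theorem \ref{thm: duality} (with $n=1$, for which there are no hypotheses to check), one has $K_{h-d,1}(C,L)^\vee\cong K_{d-2,1}(C,K_C,L)$, and the latter is the cokernel of $\wedge^{d-1}H^0(L)\otimes H^0(K_C)\to H^0(C,\wedge^{d-2}M_L\otimes K_C\otimes L)$. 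The essential gain is that the exterior power appearing here has index $d-2$, bounded purely in terms of the gonality rather than in terms of $h$.

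The heart of the argument is the analysis of $\wedge^{d-2}M_L$ via the pencil. Since $M_A$ is a line bundle, in fact $M_A\cong A^{-1}$, the base-point-free pencil trick applied to the surjection above produces a bundle $N$ which surjects onto $M_L$ and fits in $0\to A^{-1}\otimes W\to N\to A\otimes M_{L\otimes A^{-1}}\to 0$ for a vector space $W$; passing to $(d-2)$-nd exterior powers gives a filtration whose graded pieces are of the form $\wedge^{i}M_{L\otimes A^{-1}}\otimes A^{j}$ with $i\le d-2$. I would then run a downward induction on $d$, the base cases $d=2,3$ being Max Noether's projective normality theorem (Example \ref{ex: Noether}); at each stage the induction reduces to the surjectivity of multiplication maps of the shape
$$H^0(C,A)\otimes H^0(C,\wedge^{i}M_{L\otimes A^{-1}}\otimes A^{j})\longrightarrow H^0(C,\wedge^{i}M_{L\otimes A^{-1}}\otimes A^{j+1}),$$
which in turn follows, via the base-point-free pencil trick again, from the vanishing of $H^1(C,\wedge^{i}M_{L\otimes A^{-1}}\otimes A^{j-1})$ in the relevant range. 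It is exactly here that the linear growth condition enters: by controlling $h^0$ along the fibres of $f$, it forces all these intermediate $H^1$'s to vanish, the filtration collapses, and one concludes $K_{d-2,1}(C,K_C,L)=0$, hence $K_{h-d,1}(C,L)=0$.

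I expect this last step to be the real obstacle. Organizing the exterior-power filtration so that the graded pieces are genuinely of the advertised form, and then verifying that the growth hypothesis $\dim W^1_{d+n}(C)\le n$ is exactly strong enough to annihilate every intermediate cohomology group, is delicate book-keeping. It is also the step that breaks for an arbitrary $d$-gonal curve: a curve carrying a positive-dimensional excess of pencils of degree $d$ can fail one of these vanishings, which is why the literal statement of the conjecture for \emph{every} curve is not reached this way and remains open.
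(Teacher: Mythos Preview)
Your strategy is a direct cohomological one, fundamentally different from the argument the paper actually uses to establish the generic case (Theorems \ref{thm: GL small BN} and \ref{thm: GL odd genus}). The paper's proof is a \emph{degeneration} argument: starting from a curve $C$ satisfying the linear growth condition, one glues pairs of general points to build a stable curve $C'$ of odd genus $2g+3-2d$, checks via admissible covers that $C'$ has maximal gonality, and then invokes the singular version of the Hirschowitz--Ramanan--Voisin theorem (Theorem \ref{thm: nodal}) to conclude $K_{g-d+1,1}(C',\omega_{C'})=0$. The desired vanishing for $(C,K_C(x+y))$ then follows from the inclusion $K_{g-d+1,1}(C,K_C(x+y))\subset K_{g-d+1,1}(C',\omega_{C'})$. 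In particular, the argument rests on Voisin's theorem and never touches exterior powers of $M_L$ directly.

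Your direct approach, by contrast, has a genuine gap at precisely the step you flag as ``the real obstacle.'' The short exact sequence you write for the auxiliary bundle $N$ is not the standard one and does not appear to be correct as stated (the appearance of the factor $A\otimes M_{L\otimes A^{-1}}$ on the right is suspicious; the usual pencil-trick filtration involves $M_A\cong A^{-1}$ and successive quotients, not a twist by $A$). More seriously, even granting a corrected filtration, the assertion that the growth condition $\dim W^1_{d+n}(C)\le n$ forces $H^1(C,\wedge^i M_{L\otimes A^{-1}}\otimes A^{j-1})=0$ in the needed range is unsubstantiated. These bundles have large rank (governed by $h^0(L\otimes A^{-1})$, not by $d$) and are being twisted by low-degree line bundles $A^{j-1}$; the growth condition is a statement about dimensions of Brill--Noether loci, and there is no mechanism offered for converting it into such cohomological vanishing. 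Approaches of this flavor (cf.\ Paranjape--Ramanan, Ein) were pursued long before Voisin's work and did not yield the generic case; the reason the degeneration method succeeds is exactly that it sidesteps these intractable $H^1$ computations by transplanting the problem to a curve where Voisin's theorem applies.
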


Theorem \ref{thm: nonvan} applied to $L$ written as a sum of a minimal
pencil and the residual bundle yields to
$$
K_{h^0(L)-d-1,1}(C,L)\not=0.
$$

Note that if $L$ is sufficiently positive, then the Green-Lazarsfeld
Nonvanishing Theorem is optimal when applied for a decomposition where
one factor is a pencil. Indeed, consider any decomposition $L=L_1\otimes L_2$
 with $r_1=h^0(C,L_1)-1\ge 2$,
and $r_2=h^0(C,L_1)-2\ge 2$. Since $L$ is sufficiently positive,
the linear system $|K_C^{\otimes 2}\otimes L^\vee|$
is empty, and finiteness of the addition map of divisors shows
that at least one of the two linear systems $|K_C\otimes L_i^\vee|$
is empty.  Suppose $|K_C\otimes L_2^\vee|=\emptyset$, choose a point $x\in C-\mbox{Bs}(|L_1|)$ and consider a new decomposition
$L=L_1'\otimes L_2'$, with $L_1'=L_1\otimes \mathcal{O}_C(-x)$,
and $L_2'=L_2\otimes \mathcal{O}_C(x)$. Denoting
as usual $r'_i=h^0(C,L_i)-1$, we find that
$r'_1+r'_2-1 = r_1+r_2-1$, whereas $r'_1=r_1-1$, and $L_2'$
is again non-special. We can apply an inductive argument
until $r_1$ becomes $1$.
Hence the Gonality Conjecture predicts  the optimality of the
Green-Lazarsfeld Nonvanishing Theorem.
However, one major disadvantage of this
statement is that ``sufficiently positive'' is
not a precise condition. It was proved in \cite{Ap1} that by
adding effective divisors to bundles that
verify the Gonality Conjecture we obtain again bundles
that verify the conjecture. Hence, in order
to find a precise statement for Conjecture \ref{conj: gonality}
one has to study the edge cases.

In most generic cases (general curves in gonality
strata, to be more precise), the Gonality Conjecture
can be verified for line bundles of degree $2g$,
see \cite{AV} and \cite{Ap05}. The test bundles are obtained
by adding two generic points to the canonical bundle.

\begin{thm}[\cite{Ap05}]
\label{thm: GL small BN}
For any $d$-gonal curve $[C]\in \cM_g$  with $d\leq [g/2]+2$
which satisfies the condition (\ref{eqn: small BN}),
and for  general points $x, y\in C$, we have that
$K_{g-d+1,1}\bigl(C,K_C\otimes \OO_C(x+y)\bigr)=0$.
\end{thm}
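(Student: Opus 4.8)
The plan is to realize $L:=K_{C}\otimes\OO_{C}(x+y)$ as the pull‑back of a dualizing sheaf of a nodal curve and to feed the statement into Green's Conjecture on that curve, in the spirit of (and ultimately reusing the apparatus behind) the proof of Theorem \ref{thm: small BN}. First note the basic numerics: $\deg L=2g$, $h^{1}(C,L)=0$, $h^{0}(C,L)=g+1$, and for $C$ non‑hyperelliptic (the case $d=2$ being vacuous under (\ref{eqn: small BN}) for $g\ge 4$) the bundle $L$ is globally generated; thus $K_{g-d+1,1}(C,L)=0$ is exactly the Green–Lazarsfeld Gonality Conjecture \ref{conj: gonality} for $L$, and it is optimal since applying the non‑vanishing Theorem \ref{thm: nonvan} to the decomposition of $L$ into a minimal pencil plus its residual gives $K_{g-d,1}(C,L)\ne 0$.

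Next I would pass to the $1$-nodal curve $C':=C/(x\sim y)$, of arithmetic genus $g+1$, with normalization $\nu\colon C\to C'$. Since $\omega_{C'}$ is invertible one has $\nu^{*}\omega_{C'}\cong L$, pull‑back identifies $H^{0}(C',\omega_{C'})$ with $H^{0}(C,L)$ (both of dimension $g+1$), and hence $\nu^{*}M_{\omega_{C'}}\cong M_{L}$. Comparing the kernel‑bundle presentation of Koszul cohomology (Theorem \ref{prop: ker}) on $C'$ and on $C$ should yield a \emph{transfer principle}: for general $x,y$, the vanishing of $K_{g-d+1,1}(C',\omega_{C'})$ forces $K_{g-d+1,1}(C,L)=0$. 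The point to be argued is that the jump $H^{0}(C',\mathcal E)\subsetneq H^{0}(C,\nu^{*}\mathcal E)$ for $\mathcal E=\wedge^{p}M_{\omega_{C'}}\otimes\omega_{C'}$, governed by $0\to\mathcal E\to\nu_{*}\nu^{*}\mathcal E\to\mathcal E|_{\mathrm{node}}\to 0$, does not interfere in the one cohomological degree that matters when $x,y$ are general — this is the argument of \cite{Ap05} and \cite{Aprodu-Nagel: book}, and together with semicontinuity (Theorem \ref{thm: semicontinuity}) it also lets one propagate a vanishing along a short chain of further general nodes, which is what one needs in order to correct the parity of the arithmetic genus.

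It then remains to prove a vanishing of Koszul cohomology of $\omega_{C'}$, i.e. the instance of Green's Conjecture for $C'$ at the index $g-d+1$ that its Brill–Noether geometry forces (the gonality of $C'$ being $d+1$ and $\Cliff(C')=d-1$ under (\ref{eqn: small BN})). I would establish this exactly as for smooth curves: glue to $C$ a controlled number of further general pairs of points, obtaining a nodal curve $[C'']\in\cM_{g''}\cup\Delta_{0}$ of odd arithmetic genus $g''=2d''-1$ and maximal gonality $d''+1$; apply the nodal Hirschowitz–Ramanan–Voisin theorem \ref{thm: nodal} to get $K_{d''-1,1}(C'',\omega_{C''})=0$; and transfer this vanishing back down to $C'$, hence to $C$, by the principle above. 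To invoke Theorem \ref{thm: nodal} one must check that $C''$ carries no limit $\mathfrak{g}^{1}_{d''}$: a limit pencil on $C''$ restricts, through the normalization, to a pencil on $C$ in each of whose fibres every glued pair of points is contained, so for general points the existence of such a pencil of degree $e$ would force $\dim W^{1}_{e}(C)$ to be at least the number of glued pairs, and hypothesis (\ref{eqn: small BN}) then pushes $e$ strictly past $d''$. This is precisely the numerical balance for which $d\le[g/2]+2$ is the sharp range, and it excludes the offending pencils.

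The main obstacle will be the transfer step: it is not an isomorphism of Koszul groups — the space of sections of $\wedge^{p}M_{\omega}\otimes\omega$ genuinely grows under pull‑back along a normalization — so the generality of $x,y$ (and of the auxiliary glued points) must be exploited in an essential way to annihilate the surplus in the single degree that matters, and doing this uniformly over \emph{all} curves satisfying (\ref{eqn: small BN}) is the crux. A secondary, more combinatorial, difficulty is the limit‑linear‑series estimate bounding the gonality of the nodal curves in terms of $\dim W^{1}_{e}(C)$, for which one reuses the machinery underlying Theorem \ref{thm: nodal} together with the structure of the gonality strata $\cM^{1}_{g,e}$.
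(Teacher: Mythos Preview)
Your plan is essentially the paper's own argument: glue $\delta=g+3-2d$ general pairs of points (with $(x,y)$ among them) to obtain a nodal curve $C'$ of odd arithmetic genus $2(g-d+2)-1$, use admissible covers together with condition~(\ref{eqn: small BN}) to rule out a limit $\mathfrak g^1_{g-d+2}$ on $C'$, invoke Theorem~\ref{thm: nodal}, and pull the vanishing back through the inclusions $K_{g-d+1,1}(C,K_C(x+y))\subset K_{g-d+1,1}(C',\omega_{C'})$ of \cite{Voisin: even}, \cite{AV}. So the strategy is right.

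Where you miscalibrate is in locating the difficulty. The ``transfer step'' you single out as the main obstacle is in fact trivial---indeed, for the one-node normalization $\nu\colon C\to C_1=C/(x\sim y)$ one has an \emph{isomorphism} $K_{p,1}(C,K_C(x+y))\cong K_{p,1}(C_1,\omega_{C_1})$, valid for \emph{arbitrary} $x,y$. The point is that $H^0(\wedge^p M_L\otimes L)$ equals the kernel of the Koszul differential $\wedge^p H^0(L)\otimes H^0(L)\to\wedge^{p-1}H^0(L)\otimes H^0(L^{\otimes 2})$; since $H^0(C_1,\omega_{C_1})\cong H^0(C,L)$ and the multiplication map on $C$ factors through $H^0(C_1,\omega_{C_1}^{\otimes 2})\hookrightarrow H^0(C,L^{\otimes 2})$, this kernel is literally the same for $C$ and $C_1$. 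So there is no ``surplus'' to annihilate, no appeal to the genericity of $x,y$ at this stage, and no role for semicontinuity (which in any case would point the wrong way). The genuine inclusions in the chain come from the \emph{projection} steps $K_{p,1}(C_i,\omega_{C_i})\hookrightarrow K_{p,1}(C_i,\omega_{C_i}(x_{i+1}+y_{i+1}))$, and those hold unconditionally as well.

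The only place where generality of the glued points is actually used is exactly where you put your ``secondary difficulty'': the admissible-cover count showing that a limit $\mathfrak g^1_n$ on $C'$ forces $\dim W^1_n(C)\ge\delta$, contradicting~(\ref{eqn: small BN}). That is the heart of the argument, and it is not harder than the version you already accept for Theorem~\ref{thm: small BN}; once you have it, the Koszul transfer is bookkeeping.
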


The case not covered by Theorem \ref{thm: GL small BN} is slightly different.
A general curve $C$ of odd genus carries infinitely many
minimal pencils, hence a bundle of type $K_C\otimes \OO_C(x+y)$ can never
verify the vanishing predicted by the Gonality Conjecture.
Indeed, for any two points $x$ and $y$ there exists a
minimal pencil $L_1$ such that $H^0(C,L_1(-x-y))\ne 0$,
and we apply Theorem \ref{thm: GL nonvan}. However, adding
{\em three} points to the canonical bundle solves the problem, cf. \cite{Ap04},
\cite{Ap05}.

\begin{thm}
\label{thm: GL odd genus}
For any curve $[C]\in \cM_{2d-1}$  of
maximal gonality $\mathrm{gon}(C)=d+1$ and for general points
$x, y, z\in C$, we have that
$K_{d,1}\bigl(C,K_C\otimes \OO_C(x+y+z)\bigr)=0$.
\end{thm}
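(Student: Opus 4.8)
The plan is to reduce the required vanishing to the Hirschowitz--Ramanan--Voisin description of $K_{d,1}$ of the \emph{canonical} bundle on a curve of genus $2d+1=2(d+1)-1$, by replacing the pair $\bigl(C,K_C\otimes\OO_C(x+y+z)\bigr)$ with a pair $(X,\omega_X)$ for a suitable stable curve $X$. Concretely, let $X:=C\cup_{x,y,z}\mathbb P^1$ be obtained by gluing an auxiliary $\mathbb P^1$ to $C$ at the three general points $x,y,z$; the dual graph has first Betti number $2$, so $p_a(X)=(2d-1)+2=2d+1$, and $\omega_X$ restricts to $K_C\otimes\OO_C(x+y+z)$ on $C$ and to $\OO_{\mathbb P^1}(1)$ on $\mathbb P^1$. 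First I would check that restriction to $C$ induces isomorphisms
$$H^0\bigl(X,\omega_X^{\otimes q}\bigr)\xrightarrow{\ \sim\ }H^0\bigl(C,(K_C\otimes\OO_C(x+y+z))^{\otimes q}\bigr)\qquad(q=0,1,2),$$
the kernel being $H^0\bigl(\mathbb P^1,\OO_{\mathbb P^1}(q)(-x-y-z)\bigr)=H^0(\OO_{\mathbb P^1}(q-3))=0$, and the two sides having equal dimension ($1$, $2d+1$, $6d$) by Riemann--Roch on the Gorenstein curve $X$. Since restriction of sections is a ring map, these isomorphisms are compatible with the contraction and multiplication maps, hence identify the Koszul complex computing $K_{d,1}(X,\omega_X)$ with the one computing $K_{d,1}\bigl(C,K_C\otimes\OO_C(x+y+z)\bigr)$, so $K_{d,1}\bigl(C,K_C\otimes\OO_C(x+y+z)\bigr)\cong K_{d,1}(X,\omega_X)$.

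Next I would invoke the Hirschowitz--Ramanan--Voisin mechanism exactly as in the proof of Theorem~\ref{thm: nodal}. On a partial compactification of $\cM_{2d+1}$ the locus $\{[Y]:\ K_{d,1}(Y,\omega_Y)\neq 0\}$ equals, by duality, $\overline{\cZ}_{2d+1,d-1}$; by Theorem~\ref{thm: div} in the case $s=1$ (with $p=d-1$, so $g=2p+3=2d+1$, formula (\ref{hurwitz})) this is a virtual divisor of explicit class, by Voisin's Theorem~\ref{thm: Voisin} it is an \emph{actual} divisor, and comparison with the Hurwitz divisor yields $\overline{\cZ}_{2d+1,d-1}\equiv d\cdot\overline{\mathcal M}^1_{2d+1,d+1}$ as codimension one cycles. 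Consequently $K_{d,1}(X,\omega_X)\neq 0$ would force $[X]\in\overline{\mathcal M}^1_{2d+1,d+1}$, i.e. $X$ would carry an admissible (limit) $\mathfrak g^1_{d+1}$.

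It remains to exclude this. Analysing an admissible $\mathfrak g^1_{d+1}$ on $X=C\cup_{x,y,z}\mathbb P^1$, the compatibility of vanishing data at the three nodes forces the aspect on $C$, after discarding base points supported on $\{x,y,z\}$, to be a pencil $\mathfrak g^1_e$ with $e\le d+1$ one of whose divisors contains $x+y+z$. If $e\le d$ this contradicts $\gon(C)=d+1$; if $e=d+1$, then $C$ carries a minimal pencil through $x+y+z$, but for $C$ general of genus $2d-1$ one has $\dim W^1_{d+1}(C)=\rho(2d-1,1,d+1)=1$, so the minimal pencils together with their fibres sweep out only a $2$-dimensional family of effective divisors on $C$, and a general triple $x+y+z$ lies on none of them -- a contradiction. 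Hence $K_{d,1}\bigl(C,K_C\otimes\OO_C(x+y+z)\bigr)=0$ for $C$ general of genus $2d-1$ and maximal gonality and $x,y,z$ general; since the gonality stratum is irreducible and the relevant $h^i$ are constant, Theorem~\ref{thm: semicontinuity} propagates the vanishing to arbitrary $[C]\in\cM_{2d-1}$ of maximal gonality with general $x,y,z$. The small genera with $2d+1<7$ are vacuous or classical.

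The delicate point is the second step: making sense of ``$[X]\in\overline{\mathcal M}^1_{2d+1,d+1}$'' when $X$ is reducible of non-compact type (a $\mathbb P^1$ meeting $C$ in three nodes), which lies outside the partial compactification $\widetilde{\cM}_g$ over which Theorem~\ref{thm: div} is stated. This requires extending the Lazarsfeld-kernel-bundle degeneracy structure, and the class identification $\overline{\cZ}_{2d+1,d-1}\equiv d\cdot\overline{\mathcal M}^1_{2d+1,d+1}$, over a further boundary stratum containing $[X]$, together with an explicit description of admissible $\mathfrak g^1_{d+1}$-covers of such curves; essentially all the work lies there, Steps~1 and~3 being formal once it is in place.
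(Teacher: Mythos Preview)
Your approach is exactly the paper's: form $X=C\cup_{x,y,z}\mathbb P^1\in\overline{\cM}_{2d+1}$, identify $K_{d,1}\bigl(C,K_C\otimes\OO_C(x+y+z)\bigr)\cong K_{d,1}(X,\omega_X)$, show $[X]\notin\overline{\cM}^1_{2d+1,d+1}$ via admissible covers, and invoke the singular Hirschowitz--Ramanan--Voisin statement. Your caution in the last paragraph about extending Theorem~\ref{thm: nodal} and the class identity to this reducible boundary stratum is well placed; the paper's sketch simply writes ``From Theorem~\ref{thm: nodal}'' and leaves that extension to the references \cite{Ap04}, \cite{Ap05}.

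There is, however, a genuine gap in your final step. You carry out the admissible-cover count only for a \emph{general} curve $C$ (using $\dim W^1_{d+1}(C)=\rho=1$) and then appeal to Theorem~\ref{thm: semicontinuity} to propagate to ``arbitrary $[C]\in\cM_{2d-1}$ of maximal gonality with general $x,y,z$''. Semicontinuity does not give this. It only tells you that the vanishing locus $V\subset\cM_{2d-1,3}$ is open; but an open set in the total space of $\pi:\cM_{2d-1,3}\to\cM_{2d-1}$ can perfectly well miss the \emph{entire} fibre over some particular $[C]$ of maximal gonality. What you would need is that the closed non-vanishing locus contains no fibre over a maximal-gonality curve, and nothing in your argument rules that out.

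The paper does not take this detour. It argues \emph{directly} for each $C$ of maximal gonality that $C'$ has maximal gonality $d+2$, referring to the admissible-cover argument ``as in the proofs of Theorems~\ref{thm: small BN} and~\ref{thm: GL small BN}''. Concretely, your own admissible-cover analysis already shows that an admissible $\mathfrak g^1_{d+1}$ on $X$ forces a minimal pencil on $C$ with $x+y+z$ in a single fibre; to finish for arbitrary $C$ one must know that the locus of such triples is at most $2$-dimensional, i.e.\ that $\dim W^1_{d+1}(C)\le 1$ for \emph{every} curve of genus $2d-1$ and gonality $d+1$, not just the general one. That is the input you should supply (or cite from \cite{Ap04}, \cite{Ap05}) in place of the semicontinuity step; once you have it, the proof is exactly the paper's.
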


\medskip

The proofs of Theorems \ref{thm: small BN},
\ref{thm: GL small BN} and \ref{thm: GL odd genus}
are all based on the same idea. We start with a smooth curve $C$ and construct
a stable curve of higher genus out of it, in such a way  that
the Koszul cohomology does not change. Then we apply
a version of Theorem \ref{thm: HRV} for singular curves.

\vskip 3pt
\noindent
\emph{Proof of Theorems \ref{thm: small BN} and \ref{thm: GL small BN}.}
We start with $[C]\in \cM_g$ satisfying the condition (\ref{eqn: small BN}).
We claim that if we choose $\delta:=g+3-2d$ pairs of general points $x_i, y_i\in C$
for $1\leq i\leq \delta$, then the resulting stable curve $$\bigl[C':=\frac{C}{x_1\sim y_1, \ldots, x_{\delta}\sim y_{\delta}}\bigr]\in \mm_{2g+3-2d}$$  is a curve of maximal gonality, that is,  $g+3-d$.
Indeed, otherwise $[C']\in \mm_{2g+3-2d, g+2-d}^1$ and this implies that there exists a
degree $g+2-d$ admissible covering $f:\tilde{C}\rightarrow R$ from a nodal curve $\tilde{C}$ that is semi-stably equivalent to $C'$, onto a genus $0$ curve $R$. The curve $C$ is a subcurve of $\tilde{C}$ and
if $\mbox{deg}(f_{| C})=n\leq g+2-d$, then it follows that $f_{| C}$ induces a pencil $\mathfrak g^1_n$ such that $f_{| C}(x_i)=f_{| C}(y_i)$ for $1\leq i\leq \delta$. Since the points $x_i, y_i\in C$ are general, this implies that $\mbox{dim } W^1_n(C)+\delta \geq 2\delta$, which contradicts (\ref{eqn: small BN}).

To conclude, apply Theorem \ref{thm: nodal} and use the following inclusions,
\cite{Voisin: even}, \cite{AV}:
$$K_{g-d+1,1}(C,K_C)\subset K_{g-d+1,1}(C,K_C(x+y))\subset
K_{g-d+1,1}(C',\omega_{C'}).$$
\hfill
$\Box$

\begin{rmk}{\rm
The proofs of Theorems \ref{thm: small BN}, \ref{thm: nodal} and \ref{thm: GL small BN} indicate an interesting phenomenon, completely independent of Voisin's proof
of the generic Green Conjecture. They show that Green's Conjecture
for general curves of genus $g=2d-1$ and maximal gonality
$d+1$ is {\em equivalent} to the Gonality Conjecture for
bundles of type $K_C\otimes \OO_C(x+y)$ for general pointed curves
$[C,x,y]\in\mathcal{M}_{2d-2,2}$. We refer to \cite{Ap1} and \cite{AV} for
further implications between the two conjectures, in
both directions.
}
\end{rmk}

\noindent
{\emph{Proof of  Theorem \ref{thm: GL odd genus}.}}
For $C$ as in the hypothesis, and for general points $x,y,z\in C$, we construct a stable curve
$[C']\in\mathcal{M}_{2d+1}$ by adding a smooth rational
component passing through the points $x,y$ and $z$.
Using admissible covers one can show, as in the proofs
of Theorems \ref{thm: small BN} and \ref{thm: GL small BN},
that $C'$ is of maximal gonality, that is $d+2$. From Theorem
\ref{thm: nodal}, we obtain $K_{d,1}(C',\omega_{C'})=0$.
The conclusion follows from the observation:
$K_{d,1}(C,K_C\otimes \OO_C(x+y+z))\cong K_{d,1}(C',\omega_{C'})$.
\hfill $\Box$

It is natural to ask the following:

\begin{question}
For a curve $C$ and points $x, y\in C$,  can one give explicit conditions on Koszul
cohomology ensuring that $x+y$ is contained in a
fiber of a minimal pencil?
\end{question}

We prove here the following result, which can be considered
as a precise version of the Gonality Conjecture for generic curves.

\begin{thm}
 \label{thm: GL}
Let $[C]\in \cM_{2d-2}$, and $x, y\in C$  arbitrarily chosen distinct points. Then
$K_{d-1,1}(C,K_C\otimes \OO_C(x+y))\ne 0$ if and only if there
exists $A\in W^1_{d}(C)$ such that $h^0(C,A\otimes \OO_C(-x-y))\ne 0$.
\end{thm}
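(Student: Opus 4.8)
The plan is to transfer the question to a nodal canonical curve of genus $2d-1$ and apply the Hirschowitz--Ramanan--Voisin type statement, Theorem \ref{thm: nodal}.

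I would begin with the numerics. Put $g=2d-2$ and $L:=K_C\otimes\OO_C(x+y)$, so $\deg L=2g$; since $h^1(C,L)=h^0(C,\OO_C(-x-y))=0$, Riemann--Roch gives $h^0(C,L)=g+1=2d-1$, and hence $K_{d-1,1}(C,L)=K_{h^0(L)-d,1}(C,L)$ is exactly the borderline group of the Gonality Conjecture \ref{conj: gonality} for the ($d$-gonal, when $[C]$ is general) curve $C$ — the bundle $K_C\otimes\OO_C(x+y)$ of degree $2g$ being precisely the case in which that conjecture is allowed to fail. The implication ``$\Leftarrow$'' can be seen directly from the Non-Vanishing Theorem \ref{thm: GL nonvan}, applied to a splitting $L=A\otimes(L\otimes A^{-1})$ with $A$ a minimal pencil through $x+y$: one has $h^0(C,L\otimes A^{-1})=(g-d+1)+h^0(C,A\otimes\OO_C(-x-y))$ by Riemann--Roch, which pins the relevant index $r_1+r_2-1$ to $d-1$. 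In any case both implications will fall out of the construction below.

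For the converse — and in fact for both implications at once — let $C':=C/(x\sim y)$ be the irreducible $1$-nodal curve of arithmetic genus $g'=g+1=2d-1$, with normalization $\nu:C\to C'$. Then $\nu^{*}\omega_{C'}\cong L$, and since $h^0(C',\omega_{C'})=p_a(C')=2d-1=h^0(C,L)$ the pullback $H^0(C',\omega_{C'})\hookrightarrow H^0(C,L)$ is an isomorphism. As any product of two sections of $\omega_{C'}$, regarded through this identification as a section of $L^{\otimes 2}$, lies in the subspace $H^0(C',\omega_{C'}^{\otimes 2})\subset H^0(C,L^{\otimes 2})$, the Koszul differentials computing $K_{d-1,1}$ on $C$ and on $C'$ coincide, giving
$$K_{d-1,1}(C,K_C\otimes\OO_C(x+y))\ \cong\ K_{d-1,1}(C',\omega_{C'})$$
(this refines the inclusion of \cite{Voisin: even}, \cite{AV} used in the proof of Theorem \ref{thm: small BN}). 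Provided $g'=2d-1\ge 7$ and $[C']$ lies in the partial compactification $\pem_{g'}$ over which the degeneracy-locus description of Theorem \ref{thm: nodal} is valid — which holds, e.g., whenever $[C]$ is Brill--Noether general, since then $[C']\in\Delta_0^0$ — that theorem, combined with the divisor identity $\overline{\cZ}_{2d-1,d-2}\equiv(d-1)\,\mm^1_{2d-1,d}$ proved there and with the irreducibility of $\mm^1_{2d-1,d}$, yields
$$K_{d-1,1}(C',\omega_{C'})\ne 0\ \Longleftrightarrow\ [C']\in\mm^1_{2d-1,d}.$$

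It remains to read off the right-hand side. By the theory of admissible covers, $[C']\in\mm^1_{2d-1,d}$ means that $C'$ carries a degree-$d$ admissible cover onto a nodal genus-$0$ curve; restricting it to the component $C$ of the source gives a finite map $C\to\PP^1$ of degree $n\le d$ under which $x$ and $y$ — the two branches of the node of $C'$ — have the same image, i.e. a pencil $A_0\in W^1_n(C)$ with $h^0(C,A_0\otimes\OO_C(-x-y))\ne 0$; adding an effective divisor of degree $d-n$ produces the required $A\in W^1_d(C)$. Conversely, from such an $A$ one extracts a pencil of degree $\le d$ with a fibre through $x+y$ (after removing the base locus of $|A|$), and extends it to a degree-$d$ admissible cover of $C'$, so $[C']\in\mm^1_{2d-1,d}$. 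Chaining the three displayed equivalences proves the theorem. The step I expect to be the main obstacle is precisely this last one — making the correspondence between degree-$d$ admissible covers of the non-compact-type curve $C'$ and pencils of degree $\le d$ on $C$ with a fibre through $x+y$ into a rigorous argument, in particular producing the degree-$d$ extension when the pencil on $C$ has smaller degree, and, for an arbitrary (as opposed to Brill--Noether general) $[C]\in\cM_{2d-2}$, checking that $[C']$ still lies in the range where Theorem \ref{thm: nodal} applies; the isomorphism $K_{d-1,1}(C,K_C\otimes\OO_C(x+y))\cong K_{d-1,1}(C',\omega_{C'})$, though routine, also needs to be written out carefully, since semicontinuity alone gives only the inclusion.
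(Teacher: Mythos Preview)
Your approach is essentially identical to the paper's: form $C'=C/(x\sim y)$ of arithmetic genus $2d-1$, use the identification $K_{d-1,1}(C,K_C\otimes\OO_C(x+y))\cong K_{d-1,1}(C',\omega_{C'})$, apply Theorem~\ref{thm: nodal} to get $[C']\in\mm^1_{2d-1,d}$, and translate via admissible covers into a pencil $A\in W^1_d(C)$ with $x+y$ in a fibre (the ``$\Leftarrow$'' direction being handled directly by the Green--Lazarsfeld Non-Vanishing Theorem~\ref{thm: GL nonvan}, exactly as you do). The technical worries you flag --- the range of validity of Theorem~\ref{thm: nodal} for arbitrary $[C]\in\cM_{2d-2}$ and the admissible-cover bookkeeping --- are not addressed separately in the paper either: the paper simply invokes Theorem~\ref{thm: nodal} as stated (for all of $\cM_{g'}\cup\Delta_0$, not just $\widetilde{\cM}_{g'}$) and records that the restriction $f|_C$ of the admissible cover furnishes the required pencil.
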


\proof
Suppose there exists $A\in W^1_{d}(C)$ such that $h^0(C,A(-x-y))\ne 0$.
 Theorem \ref{thm: nonvan}  applied
to the decomposition $K_C(x+y)=A\otimes B$, with
$B=K_C(x+y)\otimes A^{\vee}$ produces nontrivial Koszul
classes in  the group $K_{d-1,1}(C,K_C\otimes \OO_C(x+y))$.

\medskip

For the converse, we consider $C'$ the stable
curve obtained from $C$ by gluing together the points
$x$ and $y$ and denote by $\nu:C\to C'$ the normalization
morphism. Clearly $[C']\in \mm_{2d-1}$.
We observe that
$$K_{d-1,1}(C,K_C\otimes \OO_C(x+y))\cong K_{d-1,1}(C',\omega_{C'}).$$
From Theorem \ref{thm: nodal}, it follows that $[C']\in \mm_{2d-1, d}^1$,
hence there exists a map
$$f:\widetilde{C}\stackrel{d:1}\longrightarrow R$$ from a curve $\widetilde{C}$ semistably equivalent to $C'$ onto a rational nodal
curve $R$. The curve $C$ is a subcurve of $\widetilde{C}$ and $f_{| C}$ provides the desired pencil.
\endproof

As mentioned above, the lower possible bound for explicit examples
of line bundles that verify the Gonality Conjecture found so far was $2g$.
One can raise the question whether this bound is optimal or not and the sharpest statement one can make is Conjecture
\ref{eta}  discussed in the introduction of this paper.

\section{The Strong Maximal Rank Conjecture}

Based mainly on work carried out in \cite{Fa06a} and \cite{Fa06b} we propose a conjecture predicting the resolution of an embedded curve with general moduli. This statement unifies two apparently
unrelated deep results in the theory of algebraic curves: The \emph{Maximal Rank Conjecture} which predicts the number of hypersurfaces of each degree
containing an general embedded curve $C\subset \mathbb P^r$ 
and \emph{Green's Conjecture} on syzygies of canonical curves.

We begin by recalling the statement of the classical \emph{Maximal Rank Conjecture}. The modern formulation  of this conjecture  is due to Harris \cite{H82} p. 79, even though it appears that traces of a similar statement can be found in the work of Max Noether: We fix integers $g, r$ and $d$
such that $\rho(g, r, d)\geq 0$ and denote by $\mathfrak I_{d, g, r}$ the unique component of the
Hilbert scheme $\mathrm{Hilb}_{d, g, r}$ of curves $C\subset \mathbb P^r$ with Hilbert polynomial $h_C(t)=dt+1-g$, containing curves with general moduli. In other words, the variety $\mathfrak I_{d, g, r}$ is
characterized by the following  properties:

\noindent
(1) The general point $[C\hookrightarrow \mathbb P^r]\in \mathfrak I_{d, g, r}$ corresponds to a smooth curve $C\subset \mathbb P^r$ with $\mbox{deg}(C)=d$ and
$g(C)=g$.
\newline
(2) The moduli map $m:\mathfrak I_{d, g, r}-->\cM_g$, \ $m([C\hookrightarrow \mathbb P^r]):=[C]$ is dominant.

\begin{con}\label{maxrk}\emph{(Maximal Rank Conjecture)}
A general embedded smooth curve $[C\hookrightarrow \PP^r]\in \mathfrak I_{d, g, r}$ is of maximal rank, that is, for all integers $n\geq 1$ the restriction maps
$$\nu_n(C):H^0(\mathbb P^r, \OO_{\mathbb P^r}(n))\rightarrow H^0(C, \OO_C(n))$$
are of maximal rank, that is, either injective or surjective.
\end{con}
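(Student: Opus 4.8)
The plan is to treat one integer $n$ at a time, reduce maximal rank of $\nu_n(C)$ to a cohomology vanishing, and then degenerate. Write $\mathcal I_C\subset\mathcal O_{\mathbb P^r}$ for the ideal sheaf of $[C\hookrightarrow\mathbb P^r]\in\mathfrak I_{d,g,r}$. Since $H^1(\mathbb P^r,\mathcal O_{\mathbb P^r}(n))=0$, the kernel of $\nu_n(C)$ is $H^0(\mathbb P^r,\mathcal I_C(n))$ and its cokernel is $H^1(\mathbb P^r,\mathcal I_C(n))$; the Euler characteristic $\chi(\mathcal I_C(n))=\binom{n+r}{r}-\chi(C,\mathcal O_C(n))$ is constant on $\mathfrak I_{d,g,r}$ and given by Riemann--Roch. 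Hence $\nu_n(C)$ has maximal rank if and only if $h^0(\mathcal I_C(n))=0$ (when $\chi\le 0$) or $h^1(\mathcal I_C(n))=0$ (when $\chi\ge 0$). Both $h^0$ and $h^1$ of $\mathcal I_C(n)$ are upper semicontinuous in flat families, so it is enough to exhibit, for each $n$ in the finite critical range (for $n\gg 0$ one has $\chi\gg 0$ and $\nu_n$ is surjective by Serre vanishing), a single point of the closure $\overline{\mathfrak I}_{d,g,r}$ inside the Hilbert scheme at which the relevant group vanishes.

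The heart of the argument is the choice of that degenerate curve $C_0\subset\mathbb P^r$. I would build $C_0$ inductively as a connected nodal curve obtained by attaching to a smaller curve $C'$ a component $D$ --- typically a rational normal curve of some degree, or a $2$-secant line --- subject to two demands: (i) $C_0$ smooths inside $\mathbb P^r$ to a curve with general moduli, i.e. $[C_0]\in\overline{\mathfrak I}_{d,g,r}$, which one checks via $H^1(C_0,N_{C_0/\mathbb P^r})=0$ and smoothness of the Hilbert scheme of the expected dimension at $[C_0]$ compatible with dominance of the moduli map; and (ii) the Mayer--Vietoris sequence
\[
0\to\mathcal I_{C_0}(n)\to\mathcal I_{C'}(n)\oplus\mathcal I_{D}(n)\to\mathcal I_{C'\cap D}(n)\to 0
\]
lets one propagate the vanishing: rational normal curves and lines are projectively normal, so their ideal sheaves have no higher cohomology, and the cohomology of $\mathcal I_{C'\cap D}(n)$ is governed by an interpolation statement --- that the finite reduced scheme $C'\cap D$ imposes independent conditions on degree-$n$ hypersurfaces --- which is arranged by the generic choice of attaching points. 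Running this as a double induction (on $r$, and on $d$ modulo an appropriate period) so that every numerical type $(d,g,r)$ with $\rho(g,r,d)\ge 0$ and every $n$ in its critical window is reached, with the finitely many exceptional numerical cases handled by hand, would yield the conjecture.

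The main obstacle is making steps (i) and (ii) coexist. One needs a supply of building blocks $D$ and gluing configurations $C'\cap D$ flexible enough to realize \emph{all} of $\overline{\mathfrak I}_{d,g,r}$ with $\rho\ge 0$, yet rigid enough that the residual interpolation conditions remain independent for \emph{every} $n$ in the critical range simultaneously --- and no single clean degeneration achieves this uniformly, which is exactly why the Maximal Rank Conjecture resisted proof for decades. Controlling the normal bundle $N_{C_0/\mathbb P^r}$ across the attaching nodes, in order to secure smoothability to general moduli, is itself a delicate Brill--Noether-type problem, and the interpolation problem for rational curves and their secant lines recurs as the genuine technical core throughout. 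Organizing all of this into a coherent cross-induction covering every case is the route eventually carried through by Larson.
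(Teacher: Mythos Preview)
The statement you are addressing is a \emph{conjecture} in the paper, not a theorem: the paper does not prove the Maximal Rank Conjecture and offers no proof to compare against. It merely records the known partial results --- the non-special range $d\ge g+r$ via the m\'ethode d'Horace of Hirschowitz as carried out by Ballico--Ellia, Voisin's work when $h^1(C,\mathcal O_C(1))=2$, and the case $\rho(g,r,d)=0$ --- before moving on to formulate the Strong Maximal Rank Conjecture.

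Your proposal is not really a proof but an honest outline of the degeneration strategy underlying these partial results, together with a candid admission that the combinatorics of making the smoothability condition and the interpolation condition coexist across all $(d,g,r,n)$ is the genuine obstruction. That assessment is accurate: you correctly identify the reduction to semicontinuity and ideal-sheaf cohomology, the role of reducible nodal degenerations, and the normal-bundle and interpolation subtleties that block a naive induction. Your reference to Larson's work is anachronistic relative to the paper (which predates it), but it is the right pointer for where the argument was eventually completed. As written, however, what you have is a roadmap, not a proof --- which is appropriate, since at the time of the paper the full statement was open.
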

Thus if a curve $C\subset \mathbb P^r$ lies on a hypersurface of degree $d$, then either hypersurfaces of degree $d$ cut out the complete linear series $|\OO_C(d)|$ on the curve, or else, $C$ is special in its Hilbert scheme. Since $C$ can be assumed to be a Petri general curve, it follows that $H^1(C, \OO_C(n))=0$ for $n\geq 2$, so $h^0(C, \OO_C(n))=nd+1-g$ and Conjecture \ref{maxrk} amounts to knowing the Hilbert function of $C\subset \mathbb P^r$, that is, the value of $h^0(\mathbb P^r, \I_{C/\mathbb P^r}(n))$ for all $n$.

\begin{ex}
We consider the locus of curves $C\subset \mathbb P^3$ with $\mbox{deg}(C)=6$ and $g(C)=3$ that lie on a quadric surface, that is, $\nu_2(C)$ fails to be an isomorphism.  Such curves must be of type $(2, 4)$ on the quadric, in particular, they are hyperelliptic. This is a divisorial condition on $\mathfrak{I}_{6, 3, 3}$, that is, for a general $[C\hookrightarrow \mathbb P^3]\in \mathfrak I_{6, 3, 3}$ the map $\nu_2(C)$ is an isomorphism.
\end{ex}
Conjecture \ref{maxrk} makes sense of course for any component of $\mathfrak I_{d, g, r}$ but is known to fail outside
the Brill-Noether range, see \cite{H82}. The Maximal Rank Conjecture is known to hold in the non-special range, that is when $d\geq g+r$, due to work of Ballico and Ellia relying on the \emph{m\'ethode d'Horace} of Hirschowitz, see \cite{BE87}. Vosin has also proved cases of the conjecture when $h^1(C, \OO_C(1))=2$, cf. \cite{V92}.  Finally, Conjecture \ref{maxrk} is also known in the case  $\rho(g, r, d)=0$ when it has serious implications for the birational geometry of $\mm_g$. This case can be reduced to the case when  $\mbox{dim } \mbox{Sym}^n H^0(C, \OO_C(1))=\mbox{dim } H^0(C, \OO_C(n))$, that is,
$${n+r\choose n}=nd+1-g,$$
when Conjecture \ref{maxrk} amounts to constructing one smooth curve $[C\hookrightarrow \mathbb P^r]\in \mathfrak I_{d, g, r}$ such that $H^0(\mathbb P^r, \I_{C/\mathbb P^r}(n))=0$.
In this situation,  the failure locus of Conjecture \ref{maxrk} is precisely the virtual divisor $\cZ_{g, 0}$ on $\cM_g$ whose geometry has been discussed in Section 3,  Corollary \ref{maxrankk}. The most interesting case (at least from the point of view of slope calculations) is that of $n=2$. One has the following result \cite{Fa06b} Theorem 1.5:

\begin{thm}\label{mr}
For each $s\geq 1$ we fix integers $$g=s(2s+1),\ \  r=2s \mbox{ and  }\  d=2s(s+1),$$ hence $\rho(g, r, d)=0$.  The locus
$$\mathcal{Z}_{g, 0}:=\{[C]\in \cM_g:\exists L\in W^r_d(C) \mbox{ such that } \nu_2(L): \mathrm{Sym}^2 H^0(C, L)\stackrel{\ncong}\rightarrow H^0(C, L^{\otimes 2}) \mbox{}\}$$
is an effective divisor on $\cM_g$. In particular, a general curve $[C]\in \cM_g$ satisfies the Maximal Rank Conjecture with respect to \emph{all} linear series
$L\in W^r_d(C)$.
\end{thm}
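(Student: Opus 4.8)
The plan is to recognise this as the $p=0$ case of the determinantal description in Theorem~\ref{thm: ni} and then to exhibit a single embedded curve through which no quadric passes. Specialising Theorem~\ref{thm: ni} to $p=0$, the fibres of $\phi\colon\cA\to\cB$ over $\mathfrak G^r_d$ become $\cA(C,L)=H^0(\mathbb P^r,\mathcal O_{\mathbb P^r}(2))\cong\mathrm{Sym}^2H^0(C,L)$ and $\cB(C,L)=H^0(C,L^{\otimes 2})$, with $\phi$ equal to the multiplication map $\nu_2(L)$, so that $\coker(\phi_{(C,L)})=K_{0,2}(C,L)$. For $g=s(2s+1)$, $r=2s$, $d=2s(s+1)$ one computes $\mathrm{rank}(\cA)=\binom{r+2}{2}=(s+1)(2s+1)$, while $\deg(L^{\otimes 2})=2d>2g-2$ gives $H^1(C,L^{\otimes 2})=0$ and hence $\mathrm{rank}(\cB)=2d+1-g=(s+1)(2s+1)$. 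Thus $\phi$ is a morphism between vector bundles of the \emph{same} rank, $\cU_{g,0}$ is the zero scheme of $\det\phi$, and it is either all of $\mathfrak G^r_d$ or of pure codimension one. Since $\rho(g,r,d)=0$, the unique component of $\mathfrak G^r_d$ dominating $\textbf{M}_g^0$ is generically finite over it, so $\cZ_{g,0}=\sigma_*(\cU_{g,0})$ is either all of $\cM_g$ or an effective divisor.

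To exclude the first possibility it suffices to produce one smooth curve $[C]\in\textbf{M}_g^0$ and one $L\in W^r_d(C)$ with $K_{0,2}(C,L)=0$; equivalently, since $H^1(C,L^{\otimes 2})=0$, with $C\stackrel{|L|}{\hookrightarrow}\mathbb P^r$ lying on no quadric, i.e. $H^0(\mathbb P^r,\mathcal I_{C/\mathbb P^r}(2))=0$. I would produce such a pair by a degeneration argument along the lines of \cite{Fa06b}: specialise $(C,L)$ to a reduced nodal curve $C_0\subset\mathbb P^r$ assembled from rational normal curves (for example a suitable union of rational normal curves, or a nodal curve on a rational normal scroll) whose combinatorial structure makes $H^0(\mathbb P^r,\mathcal I_{C_0}(2))$ computable from the ideal sequence $0\to\mathcal I_{C_0}(2)\to\mathcal O_{\mathbb P^r}(2)\to\mathcal O_{C_0}(2)\to 0$ by a Mayer--Vietoris analysis, check that this group vanishes, verify that $C_0$ is smoothable with general moduli (so that it lies in the closure of $\mathfrak I_{d,g,r}$ and carries no excess linear series), and finally transport the vanishing to a general $(C,L)$ over $\textbf{M}_g^0$ by semicontinuity, Theorem~\ref{thm: semicontinuity}.

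The real obstacle is exactly this construction, uniform in $s$: it is a genuine case of the Maximal Rank Conjecture in degree two, and a clean treatment needs a careful degeneration (a \emph{m\'ethode d'Horace} or liaison argument, or the nodal degenerations used in \cite{Fa06b}), the delicate point being the simultaneous control of the quadric cohomology of $C_0$ and of its smoothability inside the Brill--Noether locus $\textbf{M}_g^0$. Granting it, $\cZ_{g,0}$ is a proper subvariety, hence an effective divisor by the first paragraph, and a general $[C]\in\cM_g$ lies off it. For such $C$ the map $\nu_2(L)$ is an isomorphism for every $L\in W^r_d(C)$, while $\nu_1(L)$ is injective because $C$ is non-degenerate with $h^0(L)=r+1$ and $\nu_n(L)$ for $n\ge 3$ is surjective by a standard Castelnuovo--Mumford regularity argument; so the Maximal Rank Conjecture holds for $C$ with respect to all of $W^r_d(C)$, the only critical degree being $n=2$.
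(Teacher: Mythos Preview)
Your reduction is correct and coincides with the paper's own framework: Section~3 sets up precisely this determinantal picture (Theorems~\ref{thm: ni} and~\ref{thm: div}, Corollary~\ref{maxrankk}), and the paper then \emph{does not} prove Theorem~\ref{mr} in the text at all---it simply cites \cite{Fa06b}, Theorem~1.5. So there is no in-paper proof to compare against; the survey outsources exactly the step you flag as ``the real obstacle''.

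That said, your proposal is an honest outline, not a proof. The rank computation, the identification $\coker\phi_{(C,L)}=K_{0,2}(C,L)$, and the inference that $\cU_{g,0}$ is either everything or pure codimension one are all fine, as is the observation that generic finiteness of $\sigma$ over $\textbf{M}_g^0$ pushes a divisor on $\mathfrak G^r_d$ down to a proper subvariety of $\cM_g$. But the sentence beginning ``Granting it'' is doing all the work: producing, uniformly in $s$, a point of $\mathfrak I_{d,g,r}$ with $H^0(\I_{C/\mathbb P^r}(2))=0$ and with $[C]\in\textbf{M}_g^0$ is the entire content of the theorem beyond bookkeeping, and you have only named a strategy. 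The argument actually carried out in \cite{Fa06b} is a limit-linear-series degeneration rather than a union of rational normal curves; your sketch is plausible but would need the full Horace/smoothability analysis you allude to.

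One small addendum: to conclude that $\cZ_{g,0}$ is a \emph{nonzero} effective divisor (and not merely a proper subvariety of possibly higher codimension), you should either invoke the nonvanishing of the virtual class in Corollary~\ref{maxrankk}, or note that $\cZ_{g,0}$ is visibly nonempty (e.g.\ it contains the hyperelliptic locus, and for $s\ge 2$ also the locus of curves on $K3$ surfaces, cf.\ \cite{FaPo05}).
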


For $s=1$ we have the equality $\cZ_{3, 0}=\cM_{3, 2}^1$, and we recover the hyperelliptic locus on $\cM_3$. The next case, $s=2$ and $g=10$ has been treated in detail in \cite{FaPo05}. One has a scheme-theoretic equality $\cZ_{10, 0}=42\cdot \K_{10}$ on $\cM_{10}$, where $42=\#(W^4_{12}(C))$ is the number of minimal pencils $\mathfrak g^1_6=K_C(-\mathfrak g^4_{12})$ on a general curve $[C]\in \cM_{10}$. Thus a curve $[C]\in \cM_{10}$ fails the Maximal Rank Conjecture for a linear series $L\in W^4_{12}(C)$ if and only if it fails it for all the
$42$ linear series $\mathfrak g^4_{12}$! This incarnation of the $K3$ divisor $\K_{10}$ is instrumental in being able to compute the class of $\kk_{10}$ on $\mm_{10}$, cf. \cite{FaPo05}.
\vskip 3pt
In view of Theorem \ref{mr} it makes sense to propose a much stronger form of Conjecture \ref{maxrk}, replacing the  generality assumption of $[C\hookrightarrow \mathbb P^r]\in \mathfrak I_{d, g, r}$  by a  generality assumption of
$[C]\in \cM_g$ with respect to moduli and asking for the maximal rank of the curve with respect to all linear series
$\mathfrak g^r_d$.
\vskip 7pt
We fix positive integers $g, r, d$ such that $g-d+r\geq 0$ and satisfying $$0\leq \rho(g, r, d)<r-2.$$ We also fix a  
general curve $[C]\in \cM_g$. The numerical assumptions imply that all the linear series $l\in G^r_d(C)$ are complete (the inequality $\rho(g, r+1, d)<0$ is satisfied), as well as very ample. For each (necessarily complete) linear series $l=(L, H^0(C, L))\in G^r_d(C)$ and integer $n\geq 2$, we denote by
$$\nu_n(L): \mathrm{Sym}^n H^0(C, L) \rightarrow H^0(C, L^{\otimes n})$$ the multiplication map of global sections. We then choose a Poincar\'e bundle on $C\times \mbox{Pic}^d(C)$ and 
construct construct two vector bundles $\E_n$ and $\F_n$ over $G^r_d(C)$ with  $\mbox{rank}(\E_n)={r+n\choose n}$ and $\mbox{rank}(\F_n)=h^0(C, L^{\otimes n})=nd+1-g$, together with a bundle morphism $\phi_n:\E_n\rightarrow \F_n$, such that for $L\in G^r_d(C)$ we have that
$$\E_n(L)=\mathrm{Sym}^n H^0(C, L)\ \mbox{ and } \F_n(L)=H^0(C, L^{\otimes n})$$ and $\nu_n(L)$ is the map given by multiplication of global sections.

\begin{con}\label{strongmaxrank} \emph{(Strong Maximal Rank Conjecture)} \
We fix integers $g, r, d\geq 1$ and $n\geq 2$ as above. For a general curve  $[C]\in \cM_g$, the
determinantal variety
$$\Sigma_{n, g, d}^r(C):=\{L\in G^r_d(C): \nu_n(L) \mbox{ is not of maximal rank}\}$$
has expected dimension, that is,
$$\mathrm{dim } \ \Sigma_{n, g, d}^r(C)=\rho(g, r, d)-1-|\mathrm{rank}(\E_n)-\mathrm{rank}(\F_n)|,$$
where by convention, negative dimension means that $\Sigma_{n, g, d}^r(C)$ is empty.
\end{con}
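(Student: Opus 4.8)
Since this is a conjecture I will sketch a line of attack rather than a full argument, modelled on the proof of the $\rho=0$ case (Theorem \ref{mr}) and on Voisin's degeneration method. The first step is to reduce to an upper bound. For a general $[C]\in\cM_g$ the variety $G^r_d(C)$ is irreducible of pure dimension $\rho:=\rho(g,r,d)$ and, since $\rho<r-2$ forces every $\mathfrak g^r_d$ to be complete, the bundle $\F_n$ genuinely computes $H^0(C,L^{\otimes n})$; by the classical estimate for the codimension of determinantal loci, every component of the degeneracy locus of $\phi_n:\E_n\to\F_n$ — wherever it is non-empty — has dimension at least $\rho-1-|\mathrm{rank}(\E_n)-\mathrm{rank}(\F_n)|$, while non-emptiness in the range where this number is non-negative can be produced by specializing $L$ within $G^r_d(C)$ to a decomposable bundle $L=L_1\otimes L_2$ and applying the Green--Lazarsfeld non-vanishing Theorem \ref{thm: GL nonvan}. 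So the entire content is the opposite inequality: for a general curve no component of $\Sigma^r_{n,g,d}(C)$ is larger than expected, and in particular $\Sigma^r_{n,g,d}(C)$ is empty whenever $|\mathrm{rank}(\E_n)-\mathrm{rank}(\F_n)|\ge\rho$.

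The second step is to globalize and use semicontinuity. I would form the relative variety of (limit) linear series over a parameter space $B$ of curves, ideally including stable curves as in Section \ref{sec: cycles}, extend $\E_n,\F_n$ and $\phi_n$ over it, and consider the universal degeneracy locus. Its fibre dimension over $B$ is an upper semicontinuous function of the curve, so it is enough to produce a single curve $[C_0]$ — smooth, or better a carefully chosen stable curve — for which $\dim\Sigma^r_{n,g,d}(C_0)$ equals the expected value (or is empty when that is predicted); the statement for the general curve then holds on the dense open locus of $B$ where the fibre dimension is minimal. In the balanced case $\mathrm{rank}(\E_n)=\mathrm{rank}(\F_n)$ this is precisely the setting in which the universal locus is a virtual divisor and the class formula underlying Theorem \ref{thm: div} becomes available, and the expected-dimension assertion is then the generic non-degeneracy of $\phi_n$.

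The third step — the construction and analysis of the test curve $C_0$ — is where the real difficulty lies. For $\rho=0$ this is achieved in \cite{Fa06b} using a curve with only finitely many $\mathfrak g^r_d$'s and an explicit model of the relevant syzygy scheme, but for $\rho>0$ one needs a curve whose whole variety $G^r_d(C_0)$ is understood together with all the maps $\nu_n(L)$. The natural candidates are smooth curves on $K3$ surfaces of Picard rank one or two, in the spirit of Voisin's proof and of the role played by $\K_{10}$ in Section \ref{sec: cycles}, where Lazarsfeld--Mukai bundles parametrize $G^r_d$ and govern the multiplication maps, or nodal/reducible curves of compact type where Eisenbud--Harris limit linear series make $G^r_d$ combinatorially tractable and $\nu_n$ degenerates to a map that can be treated component by component. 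On such a curve one would stratify $G^r_d(C_0)$ and, on each stratum, bound the rank of $\nu_n(L)$ from below — for instance by filtering $L$ by sub-pencils and using the induced filtration of $\mathrm{Sym}^n H^0(L)$, or by passing to the kernel-bundle description of Theorem \ref{prop: ker} together with a base-point-free pencil trick, which would also organize an induction on $n$ down to the case $n=2$. The genuine obstacle is uniformity: one must control maximal rank simultaneously over all of $G^r_d(C_0)$, including its non-reduced or excess loci, whereas the m\'ethode d'Horace of Hirschowitz and Ballico--Ellia controls a single general embedded curve $[C\hookrightarrow\mathbb P^r]$ and says nothing about non-general embeddings of a general abstract curve. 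I expect a complete proof will have to combine a strengthened $K3$-specialization that pins down every $\mathfrak g^r_d$ with the divisor-theoretic input of Theorem \ref{thm: div}, and possibly an induction on $g$ (adding nodes or elliptic tails) that propagates the expected-dimension estimate.
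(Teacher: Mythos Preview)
This statement is labeled a \emph{Conjecture} in the paper, and the paper does not prove it; it only establishes the trivial non-special case $r=d-g$ (via Mumford's theorem) and refers back to Theorem~\ref{mr} for the case $\rho(g,r,d)=0$. You recognize this and offer a plausible strategy rather than a proof, so there is no ``paper's own proof'' to compare against. Your outline --- globalize over moduli, use semicontinuity, and search for a test curve via $K3$ or limit-linear-series degenerations --- is exactly the circle of ideas the paper deploys elsewhere (Theorems~\ref{thm: Voisin}, \ref{thm: div}, \ref{mr}), so in spirit you are on the same track as the authors.

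One step in your sketch is doubtful. You claim that non-emptiness of $\Sigma^r_{n,g,d}(C)$, when the expected dimension is non-negative, follows by specializing $L$ to a tensor product $L_1\otimes L_2$ and invoking the Green--Lazarsfeld non-vanishing Theorem~\ref{thm: GL nonvan}. That theorem produces classes in $K_{r_1+r_2-1,1}(C,L)$, which is not the same as forcing $\nu_n(L)$ to drop rank; the relation between Koszul groups $K_{p,1}$ and the multiplication maps $\nu_n$ is indirect (and for $n\ge 3$ there is no clean translation at all). Moreover, under the hypothesis $\rho(g,r,d)<r-2$ the general curve has $W^{r+1}_d(C)=\emptyset$ and every $L\in G^r_d(C)$ is very ample, so there is no obvious stratum of ``decomposable'' or otherwise degenerate $L$'s inside $G^r_d(C)$ to land on. The non-emptiness half of the conjecture (when the expected dimension is $\ge 0$) is genuinely part of what is being asserted and does not come for free from Theorem~\ref{thm: GL nonvan}; you should not present it as routine.
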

For instance, in the case $\rho(g, r, d)< nd-g-{r+n\choose n}$, the conjecture predicts that
for a general $[C]\in \cM_g$ we have that $\Sigma_{n, g, d}^r(C)=\emptyset$, that is,
$$H^0(\mathbb P^r, \I_{C/\mathbb P^r}(n))=0$$
for \emph{every} embedding $C\stackrel{|L|}\hookrightarrow \mathbb P^r$ given by  $L\in G^r_d(C)$.

When $\rho(g, r, d)=0$ (and in particular whenever $r\leq 3$), using a standard monodromy argument showing the uniqueness the component $\mathfrak I_{d, g, r}$, the Strong Maximal Rank Conjecture is equivalent to Conjecture \ref{maxrk}, and it states that $\nu_n(L)$ is of maximal rank for a general $[C\stackrel{L}\hookrightarrow \mathbb P^r]\in \mathfrak I_{d, g, r}$.

For $\rho(g, r, d)\geq 1$ however, Conjecture \ref{strongmaxrank} seems to be a more difficult question than Conjecture \ref{maxrk} because one requires a way of seeing \emph{all} linear series $L\in G^r_d(C)$ at once.

\begin{rmk} The bound $\rho(g, r, d)<r-2$ in the statement of Conjecture \ref{strongmaxrank} is clearly a necessary 
condition because for linear series $L\in G^r_d(C)$ which are not very ample, the maps $\nu_n(L)$ have no chance of being of maximal rank.
\end{rmk} 

\begin{rmk}
We discuss Conjecture \ref{strongmaxrank} when $r=4$ and $\rho(g, r, d)=1$. The conjecture is trivially true for $g=1$. The first interesting case is $g=6$ and $d=9$. For a general curve $[C]\in \cM_6$ we observe that there is an isomorphism $C\cong W^4_9(C)$ given by $C\ni x\mapsto K_C\otimes \OO_C(-x)$.
Since $\mbox{rank}(\E_2)=15$ and $\mbox{rank}(\F_2)=13$, Conjecture \ref{strongmaxrank} predicts that,
$$\nu_2(K_C(-x)):\mathrm{Sym}^2 H^0\bigl(C, K_C\otimes \OO_C(-x)\bigr)\twoheadrightarrow H^0\bigl(C, K_C^{\otimes 2}\otimes \OO_C(-2x)\bigr),$$ for all $x\in C$, which is true (use the Base-Point-Free Pencil Trick).

The next case is $g=11, d=13$, when the conjecture predicts that the map
$$\nu_2(L):\mathrm{Sym}^2 H^0(C, L)\rightarrow H^0(C, L^{\otimes 2})$$ is injective for all $L\in W^4_{13}(C)$.
This follows (non-trivially) from \cite{M94}. Another case that we checked is $r=5, g=14$ and $d=17$, when $\rho(14, 5, 17)=2$.

\begin{prop}
The Strong Maximal Rank Conjecture holds for general non-special curves, that is, when $r=d-g$.
\end{prop}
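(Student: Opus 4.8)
The plan is to observe that when $r=d-g$ the Strong Maximal Rank Conjecture collapses to the classical fact that complete linear series of high enough degree define projectively normal embeddings, so that the conclusion in fact holds for \emph{every} smooth curve of genus $g$, with no generality hypothesis needed. First I would unwind the numerology. Since $d=g+r$ we have $g-d+r=0$, hence $\rho(g,r,d)=g$, and by Riemann--Roch every $L\in\mathrm{Pic}^d(C)$ satisfies $h^1(C,L)=0$ and $h^0(C,L)=r+1$; thus $G^r_d(C)=W^r_d(C)=\mathrm{Pic}^d(C)$ and every linear series in sight is complete and non-special. Moreover the hypothesis $\rho(g,r,d)<r-2$ becomes $g<d-g-2$, i.e. $d\ge 2g+3$, so a fortiori $d\ge 2g+1$ and $nd>2g-2$ for all $n\ge 1$; hence $\F_n$ has rank $nd+1-g$ while $\E_n$ has rank $\binom{r+n}{n}$.

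Next I would invoke the classical theorem that a line bundle of degree $\ge 2g+1$ on a smooth curve is very ample and normally generated (Castelnuovo, Mumford; see also \cite{GL86}). Applied to an arbitrary $L\in G^r_d(C)=\mathrm{Pic}^d(C)$, it shows that $C\stackrel{|L|}{\hookrightarrow}\mathbb P^r$ is projectively normal, so the multiplication maps $\nu_n(L)\colon\mathrm{Sym}^n H^0(C,L)\to H^0(C,L^{\otimes n})$ are surjective for all $n\ge 1$. In particular each $\nu_n(L)$ is of maximal rank, whence $\Sigma^r_{n,g,d}(C)=\emptyset$ for every smooth curve $C$ and every $n\ge 2$; equivalently, the morphism $\phi_n\colon\E_n\to\F_n$ is everywhere surjective on fibres, so its degeneracy locus is empty.

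Finally I would check that emptiness is consistent with the predicted dimension, i.e. that $\rho(g,r,d)-1-|\mathrm{rank}(\E_n)-\mathrm{rank}(\F_n)|<0$. Surjectivity of $\nu_n(L)$ already forces $\binom{r+n}{n}\ge nd+1-g$, and an elementary estimate using $r\ge g+3$ shows in fact $\binom{r+n}{n}>nd$ for all $n\ge 2$: the case $n=2$ is the inequality $r(r-1)>4g-2$, which holds since $r\ge g+3$, and the case $n\ge 3$ follows because the ratio $\binom{r+n+1}{n+1}\big/\binom{r+n}{n}=(r+n+1)/(n+1)$ dominates $(n+1)/n$ as soon as $r\ge 2$. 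Hence $|\mathrm{rank}(\E_n)-\mathrm{rank}(\F_n)|=\binom{r+n}{n}-nd-1+g$, and the predicted dimension equals $g-1-\binom{r+n}{n}+nd+1-g=nd-\binom{r+n}{n}<0$, as required by the convention in the statement of the conjecture.

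I do not expect any genuine obstacle here: this is precisely the degenerate end of the conjecture, where asking for maximal rank along \emph{all} $\mathfrak g^r_d$ simultaneously is free of charge, since for degree reasons every $\mathfrak g^r_d$ on every curve of genus $g$ is very ample and projectively normal. The only point needing care is the bookkeeping that the expected dimension indeed comes out negative, namely the binomial inequality $\binom{r+n}{n}>nd$ throughout the range $d=g+r\ge 2g+3$, $n\ge 2$.
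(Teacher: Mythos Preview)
Your argument is correct and follows the same route as the paper: deduce $d\ge 2g+3$ from the numerical hypothesis $\rho(g,r,d)<r-2$ in the case $r=d-g$, then invoke Mumford's projective normality theorem for line bundles of degree $\ge 2g+1$ to conclude that every $\nu_n(L)$ is surjective and hence $\Sigma^r_{n,g,d}(C)=\emptyset$. If anything, you are more careful than the paper, which simply asserts that the expected dimension is negative; your verification of the inequality $\binom{r+n}{n}>nd$ for $r\ge g+3$ and $n\ge 2$ fills in exactly that bookkeeping.
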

\begin{proof}
This is an immediate application of a theorem of Mumford's stating that for any line bundle $L\in \mbox{Pic}^d(C)$ with $d\geq 2g+1$, the map $\nu_2(L)$ is surjective, see e.g. \cite{GL86}. The condition $\rho(g, r,d)<r-2$ forces in the case $r=d-g$ the inequality $d\geq 2g+3$. Since the expected dimension of $\Sigma_{n, g, d}^{d-g}(C)$ is negative, the conjecture predicts that $\Sigma_{n, g, d}^{d-g}(C)=\emptyset$.  This is confirmed by Mumford's result.
\end{proof}

\end{rmk}

\subsection{The Minimal Syzygy Conjecture}
Interpolating between Green's Conjecture for generic curves (viewed as a vanishing statement) and the Maximal Rank Conjecture, it is natural to expect that the Koszul cohomology groups of
line bundles on a general curve $[C]\in \cM_g$ should be subject to the vanishing suggested by the determinantal description provided by Theorem
\ref{specsyzvirt}. For simplicity  we restrict ourselves to the case $\rho(g, r, d)=0$:
\begin{con}\label{specialsyz}
We fix integers $r, s\geq 1$ and set $d:=rs+r$ and $g:=rs+s$, hence $\rho(g, r, d)=0$. For a general curve $[C]\in \cM_g$ and for every integer
$$0\leq p\leq \frac{r-2s}{s+1}$$
we have the vanishing $K_{p, 2}(C, L)=0$, for every linear series $L\in W^r_d(C)$.
\end{con}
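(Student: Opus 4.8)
The plan is to deduce Conjecture~\ref{specialsyz} from a single example, via the determinantal description of Theorem~\ref{specsyzvirt}, exactly as in the proof of the generic Green Conjecture, where one curve on a $K3$ surface suffices. A preliminary numerical remark explains the bound $p\le\frac{r-2s}{s+1}$: with $d=r(s+1)$ and $g=s(r+1)$ one has $\mathrm{rank}(\cA)=(p+1)\binom{r+2}{p+2}=\binom{r}{p}\frac{(r+1)(r+2)}{p+2}$ and $\mathrm{rank}(\cB)=\binom{r}{p}\bigl(rs+2r+1-s-p(s+1)\bigr)$, and these two numbers coincide precisely when $p+2=\frac{r+2}{s+1}$, i.e.\ $p=\frac{r-2s}{s+1}$; throughout the range $0\le p\le\frac{r-2s}{s+1}$ one has $\mathrm{rank}(\cA)\ge\mathrm{rank}(\cB)$, with equality only at the extreme value. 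Hence the conjecture is equivalent to the assertion that $\phi\colon\cA\to\cB$ is surjective at the generic point of the unique component of $\mathfrak G^r_d$ dominating $\textbf{M}_g$, so that the degeneracy locus $\cU_{g,p}$ does not contain that component; away from the extreme value it predicts moreover that $\cU_{g,p}$ has codimension at least $2$.

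Granting this, the semicontinuity Theorem~\ref{thm: semicontinuity}, the irreducibility of the $\rho=0$ Brill--Noether stack, and the fact that $\sigma\colon\mathfrak G^r_d\to\textbf{M}_g$ is generically finite together reduce the conjecture to producing \emph{one} pair $(C_0,L_0)$, with $[C_0]\in\cM_g$ Petri general and $L_0\in W^r_d(C_0)$, such that $K_{p,2}(C_0,L_0)=0$: then $\sigma_*(\cU_{g,p})$ is a proper subvariety of $\textbf{M}_g$, a general curve avoids it, and over such a curve every $\mathfrak g^r_d$ lies in the dominant component and therefore inherits the vanishing. For $s=1$ this is the generic Green Conjecture (the case $r=g-1$, $L_0=K_{C_0}$), namely Theorem~\ref{thm: Voisin}, and for $r=2s$ the range collapses to $p=0$, where the statement is Theorem~\ref{mr}; the genuinely new content is $s\ge 2$, $r>2s$ --- a syzygy vanishing for the special bundle $L_0$ and its residual $K_{C_0}\otimes L_0^{\vee}\in W^{s-1}_{(r+2)(s-1)}(C_0)$. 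To construct the example I would, following Voisin and Aprodu--Pacienza, work on a polarized $K3$ surface $(S,H)$ of genus $g$ with $\mathrm{Pic}(S)$ of rank $\ge 2$, chosen so that a general smooth $C_0\in|H|$ --- automatically Petri general by Lazarsfeld--Mukai --- carries one of its finitely many $\mathfrak g^r_d$'s as the restriction of a line bundle on $S$, with associated Lazarsfeld--Mukai bundle $E$. Using Voisin's Hilbert-scheme description (Theorem~\ref{thm: Voisin description}) one realizes $K_{p,2}(C_0,L_0)$ as the cokernel of the restriction map $H^0\bigl(C_0^{[p+1]}\times C_0,\det L_0^{[p+1]}\boxtimes L_0\bigr)\to H^0\bigl(\Xi_{p+1},\det L_0^{[p+1]}\boxtimes L_0|_{\Xi_{p+1}}\bigr)$ and propagates the required surjectivity from the surface by means of the inclusion $C_0^{[p+1]}\hookrightarrow S^{[p+1]}$ and the bundle $E$; alternatively one reduces directly on $S$, via Green's hyperplane-section theorem, to the vanishing of cohomology of $\wedge^pM\otimes E$-type bundles on the surface, where the rank-two Picard lattice is arranged so that a Bogomolov-type stability argument applies.

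The hard part will be precisely this $K3$-theoretic computation. Already for $s=1$ it is the technical core of Voisin's proof, and for $s\ge 2$ there is no uniform method available: one must establish the stability and control the higher cohomology of exterior powers of syzygy bundles twisted by $E$ (whose rank grows with $s$) and show that restriction to the incidence scheme $\Xi_{p+1}$ stays surjective --- delicate exactly because $p$ lies close to the threshold $\frac{r-2s}{s+1}$ at which $\mathrm{rank}(\cA)=\mathrm{rank}(\cB)$. A second, independent difficulty occurs at the extreme value $p=\frac{r-2s}{s+1}$, when it is an integer: there $\phi$ is a square morphism and $\cU_{g,p}$ a virtual divisor, so mere nonemptiness of the vanishing locus does not suffice and one needs $\phi$ to be generically \emph{nondegenerate}. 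For $s=1$ this is furnished by Theorem~\ref{thm: Voisin} together with Green's hyperplane-section theorem (cf.\ the discussion around~\eqref{hurwitz}); for $s\ge 2$ it would require either a new nondegeneracy input or an explicit class computation in the style of Theorem~\ref{thm: div} exhibiting the virtual Koszul divisor as an effective divisor distinct from $\cM_g$.
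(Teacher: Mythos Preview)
The statement you are addressing is labelled \texttt{con} in the paper --- it is Conjecture~\ref{specialsyz}, not a theorem, and the paper offers \emph{no} proof of it. What the paper does is record the known cases: $s=1$ is Voisin's generic Green Conjecture, $p=0$ is Theorem~\ref{mr} (the $\rho=0$ Maximal Rank Conjecture for quadrics), and $s=2$ with $g\le 22$ is checked in \cite{Fa06a}; beyond these the statement is open. So there is no ``paper's own proof'' to compare against.

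Your proposal is a coherent strategy sketch and your numerics are correct (in particular the identification of $p=\frac{r-2s}{s+1}$ as the value where $\mathrm{rank}(\cA)=\mathrm{rank}(\cB)$, and the semicontinuity reduction to a single Petri-general example). But you yourself flag the genuine gap: for $s\ge 2$ no one knows how to produce the required vanishing $K_{p,2}(C_0,L_0)=0$ on a $K3$ section, because Voisin's Hilbert-scheme argument and the Lazarsfeld--Mukai bundle techniques have not been made to work for higher-rank $E$ and $p$ near the threshold. The phrases ``I would, following Voisin and Aprodu--Pacienza, work on a polarized $K3$ surface'' and ``a Bogomolov-type stability argument applies'' are aspirational rather than established steps --- this is precisely the missing input that keeps Conjecture~\ref{specialsyz} a conjecture. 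Your write-up is therefore best read as a plausible plan of attack rather than a proof, and it would be misleading to present it otherwise.
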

As pointed out in Theorem \ref{specsyzvirt}, in the limiting case $p=\frac{r-2s}{s+1}\in \mathbb Z$, Conjecture \ref{specialsyz} would imply that the failure locus
$$\mathcal{Z}_{g, p}:=\{[C]\in \cM_g: \exists L\in W^r_d(C)\ \mbox{ such that }K_{p, 2}(C, L)\neq \emptyset\}$$
is an effective divisor on $\cM_g$ whose closure $\overline{\mathcal{Z}}_{g, p}$ violates the Slope Conjecture.

Conjecture \ref{specialsyz} generalizes Green's Conjecture for generic curves: When $s=1$, it reads like
$K_{p, 2}(C, K_C)=0$ for $g\geq 2p+3$, which is precisely the main result from \cite{Voisin: odd}.  Next, in the case $p=0$, Conjecture \ref{specialsyz} specializes to
Theorem \ref{mr}. The conjecture is also known to hold when $s=2$ and $g\leq 22$ (cf. \cite{Fa06a} Theorems 2.7 and 2.10).


\begin{thebibliography}{ACGH84}
\bibitem[Ap02]{Ap1}
Aprodu, M.:
On the vanishing of higher syzygies of curves.
Math. Zeit., \textbf{241}, 1--15 (2002)

\bibitem[Ap04]{Ap04}
Aprodu, M.:
Green-Lazarsfeld gonality Conjecture for a generic curve of odd genus.
Int. Math. Res. Notices, \textbf{63}, 3409--3414 (2004)

\bibitem[Ap05]{Ap05}
Aprodu, M.:
Remarks on syzygies of $d$-gonal curves.
Math. Res. Lett., \textbf{12}, 387--400 (2005)

\bibitem[ApN08]{Aprodu-Nagel: book}
Aprodu, M., Nagel, J.: Koszul cohomology and algebraic geometry, Max Planck Institut preprint 08-52, (2008).


\bibitem[ApP06]{ApP06}
Aprodu, M., Pacienza, G.:
The Green Conjecture for Exceptional Curves on a K3 Surface.
Int. Math. Res. Notices (2008) 25 pages.

\bibitem[ApV03]{AV}
Aprodu, M., Voisin, C.:
Green-Lazarsfeld's Conjecture for generic curves of large gonality.
C.R.A.S., \textbf{36}, 335--339 (2003)


\bibitem[ACGH85]{ACGH}
Arbarello, E., Cornalba, M., Griffiths, P. A., Harris, J.:
Geometry of algebraic curves, Volume I.
Grundlehren der mathematischen Wissenschaften 267,
Springer-Verlag (1985)

\bibitem[AC81a]{AC81a}
Arbarello, E., Cornalba, M.:
Footnotes to a paper of Beniamino Segre.
Math. Ann., \textbf{256}, 341--362 (1981)

\bibitem[AC81b]{AC81b}
Arbarello E ., Cornalba, M.:
Su una congettura di Petri.
Comment. Math. Helv. \textbf{56}, no. 1, 1--38 (1981)



\bibitem[BE87]{BE87} Ballico, E., Ellia, P.: The maximal rank Conjecture for nonspecial curves in $\mathbb P^n$. Math. Zeit.  \textbf{196},  no.3, 355--367 (1987)

\bibitem[BG85]{BG85}
Boraty\'nsky M., Greco, S.:
Hilbert functions and Betti numbers in a flat family.
Ann. Mat. Pura Appl. (4), \textbf{142}, 277--292 (1985)

\bibitem[Co83]{Co83}
Coppens, M.:
Some sufficient conditions for the gonality of a smooth curve.
J. Pure Appl. Algebra \textbf{30}, no. 1, 5--21 (1983).

\bibitem[CM91]{CM}
Coppens, M., Martens, G.:
Secant spaces and Clifford's Theorem,
Compositio Math., \textbf{78}, 193--212 (1991)


%
\bibitem[Ein87]{Ein}
 Ein, L.:
 A remark on the syzygies of the generic canonical curves.
 J. Differential Geom. \textbf{26}, 361--365 (1987)
%



\bibitem[Ei92]{Ei92} Eisenbud, D.: Green's Conjecture: An orientation for
algebraists. In \emph{Free Resolutions in Commutative Algebra and Algebraic Geometry},
Boston 1992.
\bibitem[Ei06]{Eisenbud: Geom Syzygies}
Eisenbud, D.:
Geometry of Syzygies.
Graduate Texts in Mathematics, 229, Springer Verlag (2006)

\bibitem[EH89]{EH89} Eisenbud, D., Harris, J: {Irreducibility of some  families
of linear series with Brill-Noether number $-1$, } Ann.\
Sci. \'Ecole \ Normale \ Superieure \textbf{22} (1989),
33--53.


\bibitem[EH87]{EH87b}
Eisenbud, D., Harris, J.:
The Kodaira dimension of the moduli space of curves of genus $\geq 23$.
Invent. Math. \textbf{90}, no. 2, 359--387 (1987)

\bibitem[EH86]{EH86}
Eisenbud, D.,  Harris, J., {Limit linear series: basic theory},
Invent. Math. \textbf{85} (1986), 337-371


\bibitem[ELMS89]{ELMS}
Eisenbud, D., Lange, H., Martens, G., Schreyer, F.-O.:
The Clifford dimension of a projective curve.
Compositio Math. \textbf{72}, 173--204 (1989)

\bibitem[EGL]{EGL}
Ellingsrud, G., G\"ottsche, L., Lehn, M.:
On the cobordism class of the Hilbert scheme of a surface.
J. Algebraic Geom. \textbf{10} no. 1, 81-100 (2001).

\bibitem[FaL08]{FaL08}
Farkas, G.,  Ludwig, K.:
The Kodaira dimension of the moduli space of Prym varieties.
Preprint arXiv:0804.4616, to appear in J. European Math. Soc (2009).

\bibitem[FaPo05]{FaPo05}
Farkas, G., Popa, M.:
Effective divisors on $\mm_g$, curves on $K3$ surfaces, and the Slope
Conjecture.
J. Algebraic Geom. \textbf{14}, 241--267 (2005)

\bibitem[Fa00]{Fa00} Farkas, G.: The geometry of the moduli space of curves
of genus $23$, Math. Ann. \textbf{318}, 43-65 (2000).

\bibitem[Fa06a]{Fa06a}
Farkas, G.:
Syzygies of curves and the effective cone of $\mm_g$.
Duke Math. J., \textbf{135}, No. 1, 53--98 (2006)

\bibitem[Fa06b]{Fa06b}
Farkas, G.:
Koszul divisors on moduli space of curves.
Preprint arXiv:0607475 (2006), to appear in American J. Math. (2009)


\bibitem[Fa08]{Fa08}
Farkas, G.:
Aspects of the birational geometry of $\mm_g$.
Preprint arXiv:0810.0702 , to appear in Surveys in Differential Geometry (2009).

\bibitem[Gr84a]{Gr84a}
Green, M.:
Koszul cohomology and the geometry of projective varieties.
J. Differential Geom., \textbf{19}, 125-171 (1984)

\bibitem[Gr84b]{Gr84b}
Green, M.:
Koszul cohomology and the geometry of projective varieties. II.
J. Differential Geom., \textbf{20}, 279--289 (1984)

\bibitem[Gr89]{Gr89}
Green, M.:
Koszul cohomology and geometry.
In: Cornalba, M. (ed.) et al., Proceedings of the first college on Riemann
surfaces held in Trieste, Italy, November 9-December 18, 1987.
Teaneck, NJ: World Scientific Publishing Co. 177--200 (1989)

\bibitem[GL84]{GL84}
Green, M., Lazarsfeld, R.:
The nonvanishing of certain Koszul cohomology groups.
J. Differential Geom., \textbf{19}, 168--170 (1984)

\bibitem[GL86]{GL86}
Green, M., Lazarsfeld, R.:
On the projective normality of complete linear series on an algebraic curve.
Invent. Math., \textbf{83}, 73--90 (1986)


\bibitem[H82]{H82} Harris, J: Curves in projective space,  Presses de l'Universit\'e de Montr\'eal 1982.


\bibitem[HM82]{HM}
Harris, J., Mumford, D.:
On the Kodaira dimension of the moduli space of curves.
Invent. Math., \textbf{67}, 23--86 (1982)

\bibitem[HaM90]{HaMo}
Harris, J., Morrison, I.:
Slopes of effective divisors on the moduli space of stable curves.
Invent. Math., \textbf{99}, 321--355 (1990)

%


\bibitem[HR98]{HR98}
Hirschowitz, A., Ramanan, S.:
New evidence for Green's Conjecture on syzygies of canonical curves.
Ann. Sci. \'Ecole Norm. Sup. (4), \textbf{31}, 145--152 (1998)


\bibitem[Ke90]{Ke90} Keem, C.:
 On the variety of special linear systems on an algebraic curve.
 Math. Ann. \textbf{288}, 309--322 (1990)





%

\bibitem[La89]{La1}
Lazarsfeld, R.:
A sampling of vector bundle techniques in the study of linear series.
In: Cornalba, M. (ed.) et al., Proceedings of
the first college on Riemann surfaces held in Trieste, Italy, November
9-December 18, 1987. Teaneck, NJ: World Scientific Publishing Co.
500--559 (1989)

\bibitem[Lo89]{Lo}
Loose, F.:
On the graded Betti numbers of plane algebraic curves.
Manuscripta Math., \textbf{64}, 503--514 (1989)

\bibitem[Ma82]{Ma1}
Martens, G.:
\"Uber den Clifford-Index algebraischer Kurven.
J. Reine Angew. Math., \textbf{336}, 83--90 (1982)

%
%

%
 \bibitem[M94]{M94}
 Mukai, S.:
 Curves and $K3$ surfaces of genus eleven. Moduli of vector bundles (Sanda 1994; Kyoto 1994), 189-197, Lecture Notes in Pure and Appl. Math. 1996.

%
%
%
%







 \bibitem[PR88]{Paranjape-Ramanan}
 Paranjape, K., Ramanan, S.:
 On the canonical ring of a curve.
 Algebraic geometry and commutative algebra, Vol. II, 503--516, Kinokuniya,
 Tokyo, 1988.


%




\bibitem[Sch86]{Sc1}
Schreyer, F.-O.:
Syzygies of canonical curves and special linear series.
Math. Ann., \textbf{275}, 105--137 (1986)

\bibitem[Sch89]{Sc2}
Schreyer, F.-O.:
Green's Conjecture for general $p$-gonal curves of large genus.
Algebraic curves and projective geometry, Trento, 1988,
Lecture Notes in Math., \textbf{1389}, Springer, Berlin-New
York 254--260 (1989)

\bibitem[Sch91]{Sc3}
Schreyer, F.-O.:
A standard basis approach to syzygies of canonical curves.
J. Reine Angew. Math., \textbf{421}, 83--123 (1991)

\bibitem[St98]{St98} Steffen, F.: A generalized principal ideal Theorem with applications to Brill-Noether theory. Invent. Math. \textbf{132} 73-89 (1998)

\bibitem[Tei02]{Teixidor02}
Teixidor i Bigas, M.:
Green's Conjecture for the generic $r$-gonal curve of genus $g\geq 3r-7$.
Duke Math. J., \textbf{111}, 195--222 (2002)

%
\bibitem[V88]{Voisin: tetragonales}
Voisin, C.:
Courbes t\'etragonales et cohomologie de Koszul.
J. Reine Angew. Math., \textbf{387}, 111--121 (1988)

\bibitem[V93]{Voisin: Proc LMS}
 Voisin, C.:
 D\'eformation des syzygies et th\'eorie de Brill-Noether.
 Proc. London Math. Soc. (3)  \textbf{67} no. 3, 493--515 (1993).

\bibitem[V02]{Voisin: even}
Voisin, C.:
Green's generic syzygy Conjecture for curves of even genus lying on a $K3$
surface.
J. European Math. Soc., \textbf{4}, 363--404 (2002).

\bibitem[V92]{V92}
Voisin, C.:
Sur l'application de Wahl des courbes satisfaisant la condition de Brill-Noether-Petri.
Acta Mathematica \textbf{168}, 249-272 (1992).

\bibitem[V05]{Voisin: odd}
Voisin, C.:
Green's canonical syzygy Conjecture for generic curves of odd genus.
Compositio Math., \textbf{141} (5), 1163--1190 (2005).



\end{thebibliography}
\end{document}